\newtheorem{theo}{Theorem}[section]
\newtheorem{lemma}[theo]{Lemma}
\newtheorem{coro}[theo]{Corollary}
\newtheorem{Def}[theo]{Definition}
\newtheorem{prop}[theo]{Proposition}
\theoremstyle{remark}
\newtheorem{rem}{Remark}[section]
\newtheorem{example}{Example}[section]
\theoremstyle{definition}
\newcommand\nc\newcommand
\newcommand\dmo\DeclareMathOperator
\nc{\dz}{{\bf d}_z}
\nc{\N}{\mathbb{N}}
\nc{\R}{\mathbb{R}}
\nc{\C}{\mathbb{C}}
\nc{\E}{\mathbb{E}}
\nc{\PP}{\mathbb{P}}
\nc{\bdm}{\begin{displaymath}}
\nc{\edm}{\end{displaymath}}
\nc{\bea}{\begin{eqnarray*}}
\nc{\eea}{\end{eqnarray*}}
\nc{\la}{\langle}
\nc{\ra}{\rangle}
\nc{\Cplus}{\mathbb{C}_+}
\nc{\Rplus}{\mathbb{R}^+}
\nc{\pitilde}{\widetilde{\pi}}
\nc{\tv}{\mathrm{tv}}
\nc{\Cnabla}{\mathbb{C}^{\nabla}}
\nc{\im}{\mathrm{Im}}
\nc{\bs}{\boldsymbol}
\nc{\ti}{\widetilde}
\nc{\Msub}{M_{\mathrm{sub}}}
\nc{\diag}{\mathrm{diag}}
\nc{\ii}{\mathrm{i}}%{\boldsymbol{i}}
\numberwithin{equation}{section}
\nc{\bv}{\boldsymbol{\varepsilon}}
\nc{\dzz}{\boldsymbol{\delta}_z}
\nc{\tr}{\mathrm{tr}\,}
\nc{\cvgP}[1]{\xrightarrow[#1]{\mathcal P}}
\nc{\cvgD}{\xrightarrow[]{\mathcal D}}
\nc{\eqdef}{\stackrel{\triangle}{=}} 
\nc{\lrn}{\left|\!\left|\!\left|} 
\nc{\rrn}{\right|\!\right|\!\right|} 
\nc{\bell}{\boldsymbol{\ell}}
\nc{\blambdaR}{\boldsymbol{\lambda}_R}
\nc{\comment}[1]{\textcolor{blue}{[{\it #1}]}}
\nc{\rhoVn}{\rho(V_n)}	%{\rho_{{\bf V}_n}}
\nc{\rhoV}{\rho(V)}	%{\rho_{\bf V}}
\nc{\tkappa}{{\widetilde \kappa}}
\nc{\Ncenter}{\stackrel{\circ}{\mathcal N}_n}
\nc{\Sn}{{\mathcal S}_n}
\nc{\Kn}{{\mathcal K}_n}
\dmo{\Imm}{Im}
\dmo{\Real}{Re}
\dmo*{\dist}{dist}
\dmo{\var}{var}
\dmo{\trace}{Tr}
\dmo{\rank}{rank}
\dmo{\spec}{spec}
\nc{\Rd}{R_n^{1/2}}
\nc{\Rdb}{\bar{R}_n^{1/2}}
\nc{\V}{\mathcal V}
\nc{\VV}{|\mathcal V|^2}
\nc{\dlp}{d_{\mathcal LP}}
\nc{\Qij}{Q_{[ij]}}
\nc{\cU}{{\mathcal U}}
\nc{\smax}{\sigma_{\max}} %{\boldsymbol{\sigma}_{\boldsymbol{\max}}}
\nc{\smin}{\sigma_{\min}} %{\boldsymbol{\sigma}_{\boldsymbol{\min}}}
\nc{\Xn}{X^{\mathcal N}}
\nc{\Yn}{Y^{\mathcal N}}
\nc{\Hn}{H^{\mathcal N}}
\nc{\Gn}{G^{\mathcal N}}
\nc{\tGn}{\widetilde{G}^{\mathcal N}}
\nc{\Fn}{F^{\mathcal N}}
\nc{\cFn}{{F}^{'\mathcal N}}
\nc{\vxi}{\vec{\xi}}
\nc{\vxin}{\vec{\xi}^{\mathcal N}}
\nc{\xin}{\xi^{\mathcal N}}
\nc{\bxin}{\bar{\xi}^{\mathcal N}}
\nc{\Ec}{\E_{\{1\}}}
\nc{\Oeta}[1]{{\mathcal O}_{\eta} \left( {#1}\right)}
\nc{\vOeta}[1]{\vec{\mathcal O}_{\eta} \left( {#1}\right)}
\nc{\specnorm}[1]{\left\| {#1}\right\|_{\mathrm{sp}}}
\nc{\posneq}{\succcurlyeq_{\neq}}
\nc{\bq}{\bs{q}}
\nc{\qt}{\widetilde{q}}
\nc{\bqt}{\bs{\qt}}
\nc{\qvec}{\vec{\bs q}}
\nc{\qvecstar}{\qvec_*}
\nc{\bqstar}{\bq_*}
\nc{\bqtstar}{\bqt_*}
\nc{\bg}{\bs{g}}
\nc{\gt}{\widetilde{g}}
\nc{\bgt}{\bs{\gt}}
\nc{\rr}{r}
\nc{\rt}{\widetilde{r}}
\nc{\br}{\bs{r}}
\nc{\brt}{\bs{\rt}}
\nc{\rvec}{\vec{\bs r}}
\nc{\rvecstar}{\rvec_*}
\nc{\brstar}{\br_*}
\nc{\brtstar}{\brt_*}
\nc{\pt}{\widetilde{p}}
\nc{\bp}{\bs{p}}
\nc{\bpt}{\bs{\pt}}
\nc{\pvec}{\vec{\bp}}
\nc{\vp}{\varphi}
\nc{\vpt}{\widetilde{\varphi}}
\nc{\bphi}{\bs{\varphi}}
\nc{\bphit}{\bs{\widetilde \varphi}}
\nc{\phivec}{\bs{\vec{\varphi}}}
\nc{\q}{\bs{q}}
\nc{\qtilde}{\bs{\widetilde q}}	
\nc{\p}{\bs{p}}
\nc{\ptilde}{\bs{\widetilde p}}
\nc{\tp}{\widetilde{\varphi}}
\nc{\vecepsilon}{\vec{\boldsymbol{\varepsilon}}}
\nc{\Qa}{{\mathcal Q}(\bs{\alpha}, A, \bs{a})}
\nc{\Qb}{{\mathcal Q}(\bs{\beta}, B, \bs{b})}
\nc{\Sab}{{\mathcal S}_{AB}}
\nc{\Sa}{{\mathcal S}_{A}}
\nc{\Sb}{{\mathcal S}_{B}}
\nc{\Sig}{A}
\dmo{\eps}{\varepsilon}
\dmo{\ls}{\lesssim}
\dmo{\gs}{\gtrsim}
\nc{\expo}[1]{\exp \left( #1 \rule{0mm}{3mm}\right)}
\dmo{\e}{\mathbb{E}}
\dmo{\pr}{\mathbb{P}}
\dmo{\un}{\mathbbm{1}}
\nc{\tran}{\mathsf{T}} 	
\nc{\scut}{\snot}		%{\boldsymbol{\sigma}_{\mathrm{cut}}}
\nc{\snot}{\sigma_0}	%{\bs{\sigma}_0}
\nc{\mN}{\mathcal{N}}
\nc{\vpmax}{\|\bphi\|_\infty}%{\varphi_{\max}}
\nc{\tpmax}{\|\bphitilde\|_\infty}
\nc{\bS}{\bs{S}}
\nc{\bd}{\bs{d}}
\nc{\bdt}{\bs{\widetilde{d}}}
\nc{\dt}{\widetilde{d}}
\nc{\HS}{\mathsf{HS}}
\nc{\Mo}{M_0}
\nc{\Res}{\bs{R}}
\nc{\Y}{\bs{Y}}
\nc{\Am}{\bs{A}^{(m)}}
\nc{\Vm}{\bs{V}^{(m)}}
\nc{\Ym}{\bs{Y}^{(m)}}
\nc{\Lm}{\check{\bs{L}}^{(m)}}
\nc{\wY}{{\widetilde Y}}
\nc{\wA}{{\widetilde A}}
\nc{\Blue}[1]{\textcolor{blue}{#1}}
\nc{\comN}[1]{\textcolor{ForestGreen}{#1}}
\newcommand{\qcheck}{\check{q}}
\title[ Non-Hermitian random matrices with a variance profile]{
Non-Hermitian random matrices with a variance profile (II): properties and examples}
\author[N. Cook, W. Hachem, J. Najim, D. Renfrew]{Nicholas Cook, Walid Hachem, Jamal Najim and David Renfrew} 
\date{\today}
\keywords{}
\subjclass[2010]{Primary 15B52, Secondary 15A18, 60B20}
\begin{document}

\begin{abstract} 
For each $n$, let $A_n=(\sigma_{ij})$ be an $n\times n$ deterministic matrix and let $X_n=(X_{ij})$ be an $n\times n$ random matrix with i.i.d.\ centered entries of unit variance. In the companion article \cite{cook2018non}, we considered the empirical spectral distribution $\mu_n^Y$ of the rescaled entry-wise product
\[
Y_n = \frac 1{\sqrt{n}} A_n\odot X_n = \left(\frac1{\sqrt{n}} \sigma_{ij}X_{ij}\right)
\]
and provided a deterministic sequence of probability measures $\mu_n$
such that the difference $\mu^Y_n - \mu_n$ converges weakly in probability to the zero measure. A key feature in \cite{cook2018non} was to allow some of the entries $\sigma_{ij}$ to vanish, provided that the standard deviation profiles $A_n$ satisfy a certain quantitative irreducibility property.

In the present article, we provide more information on the sequence $(\mu_n)$, described by a family of \emph{Master Equations}. We consider these equations in important special cases such as separable variance profiles $\sigma^2_{ij}=d_i \widetilde d_j$ and sampled variance profiles $\sigma^2_{ij} = \sigma^2\left(\frac in, \frac jn \right)$ where $(x,y)\mapsto \sigma^2(x,y)$ is a given function on $[0,1]^2$.
Associate examples are provided where $\mu_n^Y$ converges to a genuine limit.

We study $\mu_n$'s behavior at zero and provide examples where $\mu_n$'s density is bounded, blows up, or vanishes while an atom appears. As a consequence, we identify the profiles that yield the circular law.

Finally, building upon recent results from Alt et al. \cite{alt2018local,alt2019location}, we prove that except maybe in zero, $\mu_n$ admits a positive density on the centered disc of radius $\sqrt{\rho(V_n)}$, where $V_n=(\frac 1n \sigma_{ij}^2)$ and $\rho(V_n)$ is its spectral radius.

\end{abstract}

\maketitle

%\tableofcontents

%%%%%%%%%%%%
%%%%%%%%%%%% \input{sec1_intro.tex}
%%%%%%%%%%%%

\section{Introduction} 
\label{sec:intro}
For an $n\times n$ matrix $M$ with complex entries and eigenvalues $\lambda_1,\dots,\lambda_n\in \C$ (counted with multiplicity and labeled in some arbitrary fashion), the \emph{empirical spectral distribution (ESD)} is given by
\begin{equation}	\label{def:esd}
\mu^M_n = \frac1n\sum_{i=1}^n \delta_{\lambda_i}\;.
\end{equation}
A seminal result in non-Hermitian random matrix theory is the \emph{circular law}, which describes the asymptotic global distribution of the spectrum for matrices with i.i.d.\ entries of finite variance -- see \cite{cook2018non} for additional references and the survey \cite{2012-bordenave-chafai-circular} for a detailed historical account. 

In the companion paper \cite{cook2018non}, we studied the limiting spectral distribution $\mu_n^Y$ for {\it random matrices with a variance profile} (see Definition \ref{def:model}). More precisely, we provided a deterministic sequence of
probability measures $\mu_n$ each described by a family of \emph{Master
Equations} (see \eqref{def:ME}), such that the difference $\mu^Y_n - \mu_n$ converges weakly in
probability to the zero measure. 
A key feature of this result was to allow a large proportion of the matrix entries to be zero, 
which is important for applications to the modeling of dynamical systems 
such as neural networks and food webs \cite{Ahmadian:2015xw, Allesina:2015ux}. 
This also presented challenges for the quantitative analysis of the Master Equations,
for which we developed the \emph{graphical bootstrapping} argument.

After the initial release of \cite{cook2018non}, a local law version of our
main statement (Theorem~\ref{thm:main}) was proven in \cite{alt2018local} under
the restriction that the standard deviation profile $\sigma_{ij}$ is uniformly
strictly positive and that the distribution of the matrix entries possesses a
bounded density and finite moments of every order.

In this article, we consider in more detail the measures $(\mu_n)$. In particular, we provide new conditions that ensure the positivity of the density of $\mu_n$ and study the behavior of $\mu_n$ at zero. This study allows us to deduce a necessary condition for the circular law.  Additionally, we specialize the standard deviation profile to separable and sampled standard deviation profiles,
which are important from a modeling perspective, and
which yield interesting examples and genuine limits. Simulations illustrate the scope of our results.  
\subsection{The model.} 
We study the following general class of random matrices with non-identically distributed entries.
\begin{Def}[Random matrix with a variance profile]		\label{def:model}
For each $n\ge 1$, let $A_n$ be a (deterministic) $n\times n$ matrix with entries $\sigma_{ij}^{(n)}\ge0$, let $X_n$ be a random matrix with i.i.d.\ entries $X_{ij}^{(n)}\in \C$ satisfying
\begin{equation}	\label{standardized}
\E X_{11}^{(n)}=0\, , \quad \E|X_{11}^{(n)}|^2=1
\end{equation}
and set
\begin{equation}	\label{def:Yn}
Y_n = \frac1{\sqrt{n}}A_n\odot X_n
\end{equation}
where $\odot$ is the matrix Hadamard product, i.e.\ $Y_n$ has entries $Y_{ij}^{(n)} = \frac1{\sqrt{n}} \sigma_{ij}^{(n)} X_{ij}^{(n)}$.
The empirical spectral distribution of $Y_n$ is denoted by $\mu_n^Y$.
We refer to $A_n$ as the \emph{standard deviation profile} and to $A_n\odot A_n = \big((\sigma_{ij}^{(n)})^2\big)$ as the \emph{variance profile}.
We additionally define the \emph{normalized variance profile} as
\[
V_n=\frac1nA_n\odot A_n. \]
When no ambiguity occurs, we drop the index $n$ and simply write $\sigma_{ij}, X_{ij}, V$, etc.  
\end{Def}

\subsection{Master equations and deterministic equivalents} 

The main result of \cite{cook2018non} states that under certain assumptions on
the sequence of standard deviation profiles $A_n$ and the distribution of the
entries of $X_n$, there exists a tight sequence of deterministic probability
measures $\mu_n$ that are \emph{deterministic equivalents} of the spectral
measures $\mu_n^Y$, in the sense that for every continuous and bounded function
$f:\C\to\C$,
\[
\int f\, d\mu_n^Y - \int f\, d\mu_n \xrightarrow[n\to\infty]{} 0 \qquad \text{in probability} .
\]
In other words, the signed measures $\mu_n^Y - \mu_n$ converge weakly in probability to zero.
In the sequel this convergence will be simply denoted by 
\[
\mu_n^Y \sim \mu_n\qquad \text{in probability} \quad (n\to\infty).
\]

The measures $\mu_n$ are described by a polynomial system of \emph{Master Equations}.
Denote by $V_n^\tran$ the transpose matrix of $V_n$, by $\rhoVn$ its spectral radius and by $[n]=\{ 1,\cdots, n\}$. 
For a parameter $s\ge0$, the Master Equations are the following system of $2n+1$ equations in $2n$ unknowns $q_1,\dots,q_n,\qt_1,\dots,\qt_n$:
\begin{equation}	\label{def:ME}	
\begin{cases}
q_i&= \dfrac{ (V_n^\tran {\bq})_i} {s^2+(V_n\bqt)_i(V_n^\tran\bq)_i  }  \\
\\
\qt_i&= \dfrac{ (V_n \bqt)_i} {s^2+(V_n\bs {\qt})_i(V_n^\tran\bq)_i  } \\ 
\\
&\sum_{i\in [n]} q_i  = \sum_{i\in [n]} {\qt}_i
\end{cases}
\ ,\qquad q_i, \qt_i\ge0,\ i\in [n],
\end{equation}
where $\bq,\bqt$ are the $n\times 1$ column vectors with components 
$q_i,\qt_i$, respectively. In the sequel, we shall write 
$\qvec = \begin{pmatrix} \bq \\ \bqt \end{pmatrix}$. 
If $s\ge \sqrt{\rhoVn}$, it can be shown that the only non-negative solution is
the trivial solution $\qvec=0$.  When $0<s<\sqrt{\rhoVn}$ and the matrix $V_n$ 
is irreducible, the Master Equations admit a unique positive solution $\qvec$ 
that depends on $s$. This solution 
$s\mapsto \qvec(s)$ is continuous on $(0,\infty)$. With this definition of $\bq(s)$ and $\bqt(s)$, 
the deterministic equivalent $\mu_n$ is defined as the radially symmetric 
probability distribution on $\C$ satisfying 
\[ 
\mu_n\{ z \in \C\, , \ |z|\le s\} = 1 - \frac 1n \bq^\tran (s) V_n \bqt(s)
  \ ,\quad s>0\ .  
\] 
It readily follows that the support of $\mu_n$ is contained in the disk of 
radius $\sqrt{\rhoVn}$.
\subsection{Contributions of this paper} In this article, we continue the study of the model initiated in  \cite{cook2018non}, where we provided existence of a $\mu_n$ such that $\mu_n \sim \mu_n^Y$ for random matrices in Definition \ref{def:model}. In particular, we study properties of $\mu_n$: positivity of its density and its behavior at zero, as well as identify variance profiles that yield the circular law. We also consider several special classes of variance profiles.  

In Section \ref{sec:results}, we recall the main results of \cite{cook2018non}. Then, in Proposition \ref{prop:LB} and Theorem \ref{th:positive} we provide sufficient conditions for which the density of $\mu_n$ is positive on the disc of radius $\sqrt{\rhoVn}$. Special attention is given to the density near zero, for which we give an explicit formula. As a consequence of our formula at zero, we have in Corollary \ref{prop:circlaw} that doubly stochastic normalized variance profiles, i.e. $V_n=\left( n^{-1} \sigma^2_{ij}\right)$ such that
$$
\frac 1n \sum_{i=1}^n \sigma_{ij}^2 ={\mathcal V}\quad \forall j\in [n]\qquad \textrm{and} \qquad  
\frac 1n \sum_{j=1}^n \sigma_{ij}^2 ={\mathcal V}\quad \forall i\in [n]\, .
$$
for some fixed ${\mathcal V}>0$, are, up to conjugation by diagonal matrices, the only profiles that give the circular law. 

In Section \ref{sec:examples}, we provide examples of variance profiles with vanishing entries. In particular, we study band matrices and give an example of a distribution with an atom and a vanishing density at zero (Proposition \ref{prop:singular.profile}).

In Section \ref{sec:separable}, we consider the Master Equations in the case of separable variance profiles.
Consider $D_n=\diag(d_i,\, 1\le i\le n)$ and $\widetilde D_n=\diag(\widetilde d_i,\, 1\le i\le n)$ two $n\times n$ diagonal matrices. Then the matrix model 
$$
Y_n =\frac 1{\sqrt{n}} D^{1/2}_n X_n \widetilde D^{1/2}_n 
$$
admits a separable variance profile in the sense that $\var(Y_{ij})= n^{-1} 
d_i \widetilde d_j$. Note that $\rho(V_n) = n^{-1} \sum_{i\in [n]} d_i \tilde
d_i$ for this model.  In this case the $2n$ Master Equations \eqref{def:ME}
simplify to a single equation, see Theorems \ref{th:separable} and \ref{th:separable-sampled}. As applications,
we recover Girko's Sombrero probability distribution and give examples with
unbounded densities at zero; see Sections \ref{sec:Sombrero} and \ref{sec:unbounded}.

In Section \ref{sec:sampled}, we consider sampled variance profiles, where the profile is obtained by evaluating a fixed continuous function $\sigma(x,y)$ on the unit square at the grid points $\{( i/n,  j/n)\colon 1\le i,j\le n\}$. Here, in the large $n$ limit the Master Equations \eqref{def:ME} turn into
an integral equation defining a genuine limit for the ESDs: 
\[
\mu_n^Y \xrightarrow[n\to\infty]{} \mu^\sigma
\]
weakly in probability; see Theorem \ref{th:sampled}.

Finally, Section \ref{sec:positivity} is devoted to the proof of the results in Section \ref{sec:results} concerning positivity and finiteness of the density of $\mu_n$. Much of this analysis will build upon results developed by Alt et al. \cite{alt2018local,alt2019location} in combination with the regularity of the solutions to the Master Equations proven in \cite{cook2018non}.

\subsection*{Acknowledgements} 
The work of NC was partially supported by NSF grants DMS-1266164 and DMS-1606310.
The work of WH and JN was partially supported by the Labex BEZOUT from the Gustave Eiffel University. 
DR was partially supported by 
Austrian Science Fund (FWF): M2080-N35. DR would also like to thank Johannes Alt, L{\'a}szl{\'o} Erd{\H o}s, and Torben Kr{\"u}ger for numerous enlightening conversations.

\section{Limiting spectral distribution: a reminder and some complements}
\label{sec:results}

In this section, we recall the main results in Cook et al. \cite{cook2018non} and then give theorems concerning the density of $\mu_n$.
\subsection{Notational preliminaries}		\label{sec:notation}
Denote by $[n]$ the set $\{1,\cdots,n\}$ and let $\C_+=\{ z\in \C\, ,\
\im(z)>0\}$. For ${\mathcal X} = \C$ or $\R$, let $C_c(\mathcal X)$
(resp.~$C_c^\infty(\mathcal X)$) the set of $\mathcal X \to\R$ continuous
(resp.~smooth) and compactly supported functions.  Let $\mathcal B(z,r)$ be the
open ball of $\C$ with center $z$ and radius $r$.  If $z\in \C$, then $\bar{z}$
is its complex conjugate; let $\ii^2=-1$.  The Lebesgue measure on $\C$ will be
either denoted by $\ell(\, dz)$ or $dx dy$. 
The cardinality of a finite set $S$ is denoted by $|S|$.
We denote by $\1_n$ the $n\times 1$ vector of 1's. Given two $n\times 1$
vectors $\bs u,\bs v$, we denote their scalar product $\langle \bs u,\bs
v\rangle=\sum_{i\in [n]} \bar{u}_i v_i$.
Let $\bs{a}=(a_i)$ an $n\times 1$ vector. We denote by $\diag(\bs{a})$ the
$n\times n$ diagonal matrix with the $a_i$'s as its diagonal elements. 
For a given matrix $A$, denote by $A^\tran$ its transpose, by $A^*$ its
conjugate transpose, and by $\|A\|$ its spectral norm. Denote by $I_n$ the
$n\times n$ identity matrix. If clear from the context, we omit the dimension.
For $a\in \C$ and when clear from the context, we sometimes write $a$ instead
of $a\, I$ and similarly write $a^*$ instead of $(aI)^*=\bar{a}I$.  For
matrices $B,C$ of the same dimensions we denote by $B\odot C$ their Hadamard,
or entry-wise, product (i.e.\ $(B\odot C)_{ij} = B_{ij} C_{ij}$). 
Notations $\succ$ and $\succcurlyeq$ refer to the element-wise inequalities for 
real matrices or vectors. Namely, if $B$ and $C$ are real matrices, 
\[
B\succ C\quad \Leftrightarrow\quad B_{ij} > C_{ij}\quad \forall i,j\qquad \text{and}\qquad B\succcurlyeq C\quad \Leftrightarrow\quad B_{ij} \ge C_{ij}\ \quad\forall i,j .
\]
The notation $B \posneq 0$ stands for $B\succcurlyeq 0$ and $B\neq 0$. 
We denote the spectral radius of an $n\times n$ matrix $B$ by
\begin{equation}	\label{def:specrad}
\rho(B) = \max\big\{\, |\lambda|\colon \text{ $\lambda$ is an eigenvalue of $B$}\,\big\}.
\end{equation}

\subsection{Model assumptions} 

We will establish results concerning sequences of matrices $Y_n$ as in
Definition \ref{def:model} under various additional assumptions on $A_n$ and
$X_n$, which we now summarize.  We note that many of our results only require a
subset of these assumptions. We refer the reader to \cite{cook2018non} for
further remarks on the assumptions.

For our main result we will need the following additional assumption on the
distribution of the entries of $X_n$.

\begin{enumerate}[leftmargin=*, label={\bf A0}]
\item\label{ass:moments} 
(Moments). We have $\E|X_{11}^{(n)}|^{4+\varepsilon}\le \Mo$ for all $n\ge1$ and some fixed $\varepsilon>0$, $\Mo<\infty$.
\end{enumerate}

We will also assume the entries of $A_n$ are bounded uniformly in $i,j\in [n]$, $n\ge1$:

\begin{enumerate}[leftmargin=*, label={\bf A1}]
\item\label{ass:sigmax}	
(Bounded variances).
There exists $\smax\in (0,\infty)$ such that
\[
\sup_n \max_{1\le i,j\le n} \sigma_{ij}^{(n)} \le \smax.
\]
\end{enumerate}

In order to express the next key assumption, we need to introduce the following
{\em Regularized Master Equations} which are a specialization of the
Schwinger--Dyson equations of Girko's Hermitized model associated to $Y_n$. 

\begin{prop}[Regularized Master Equations]
\label{prop:MEt}	
Let $n\ge 1$ be fixed, let $A_n$ be an $n\times n$ nonnegative matrix and write $V_n=\frac1nA_n\odot A_n$. Let $s,t > 0$ be fixed, and consider the following system of equations 
\begin{equation}
\label{def:MEt} 
\left\{ 
\begin{array}{ccc}
\rr_i&=& \dfrac{ (V_n^\tran  \br)_i+t} {s^2+((V_n\brt)_i+t) ((V_n^\tran \br)_i +t) }  \\
\\
\rt_i&=& \dfrac{ (V_n \brt)_i+t} {s^2+((V_n\brt)_i+t) ((V_n^\tran \br)_i +t) } \\ 
\end{array}
\right.\ ,
\end{equation}
where $\br=(\rr_i)$ and $\brt=(\rt_i)$ are $n\times 1$ vectors. Denote by $\rvec=\begin{pmatrix} \br \\ \brt\end{pmatrix}$. Then this 
system admits a unique solution $\rvec=\rvec(s,t) \succ 0$. This solution
satisfies the identity 
\begin{equation}	\label{q_t:trace}
\sum_{i\in [n]} \rr_i \ =\ \sum_{i\in [n]} \rt_i \, .
\end{equation}
\end{prop}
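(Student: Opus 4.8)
The plan is to establish existence and uniqueness of a positive solution to the system \eqref{def:MEt} via a fixed-point argument, and then derive the trace identity \eqref{q_t:trace} by an algebraic manipulation exploiting the structure of the equations. First I would rewrite the system as a fixed-point equation $\rvec = \Phi(\rvec)$ on the positive cone $(0,\infty)^{2n}$, where $\Phi$ is the map defined by the right-hand sides of \eqref{def:MEt}. The key structural observation is that, because $t>0$ and $s>0$, each coordinate of $\Phi$ maps $[0,\infty)^{2n}$ into a compact interval: the numerators are bounded below by $t>0$ and above by $t + \|V_n\|\,\|\rvec\|_\infty$ (using Assumption \ref{ass:sigmax} to control $\|V_n\|$, or just its finiteness for fixed $n$), while the denominators are bounded below by $s^2 + t^2 > 0$. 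So one can confine the search to a compact convex box $K = \prod [a_k, b_k]$ with $a_k = t/(s^2 + (t+R)^2)$ and $b_k = (t+R)/(s^2+t^2)$ for a suitably large $R$ making $K$ invariant under $\Phi$; Brouwer's fixed-point theorem then yields existence of a solution $\rvec \succ 0$.

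For uniqueness, the natural approach is a monotonicity/contraction argument. The map $\Phi$ has a monotonicity structure reminiscent of the standard-interference-function framework (Yates) or of the Schwinger–Dyson equations for Hermitized random matrices: one checks that $\Phi$ is monotone in an appropriate partial order or, more robustly, that some iterate is a contraction in the Thompson (projective/Hilbert) metric on the positive cone, or simply in a weighted sup-norm once one uses the lower bounds on the coordinates. Concretely, since the excerpt already asserts (before Proposition~\ref{prop:MEt}) that these regularized equations are ``a specialization of the Schwinger–Dyson equations of Girko's Hermitized model,'' I would invoke the classical fact that such Hermitized Dyson equations have a unique positive solution for $t>0$ (this is where the regularization parameter $t$ does its job — it pushes the solution strictly into the interior and kills the degeneracy that makes the unregularized Master Equations \eqref{def:ME} have a nontrivial solution only for $s < \sqrt{\rho(V_n)}$). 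Alternatively one can argue directly: if $\rvec,\rvec'$ are two solutions, subtract the equations, and use that $x\mapsto 1/(s^2 + (x+t)(y+t))$ is Lipschitz with a constant strictly less than what is needed, to close a contraction estimate.

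To prove the trace identity \eqref{q_t:trace}, I would manipulate the two families of equations. Writing $u_i = (V_n^\tran \br)_i + t$ and $v_i = (V_n \brt)_i + t$, the equations read $\rr_i = u_i/(s^2 + u_i v_i)$ and $\rt_i = v_i/(s^2 + u_i v_i)$. The denominators $s^2 + u_i v_i$ are the \emph{same} for $\rr_i$ and $\rt_i$ — call them $D_i$. Then
\[
\sum_i \rr_i v_i \;=\; \sum_i \frac{u_i v_i}{D_i} \;=\; \sum_i \rt_i u_i \, .
\]
Now $\sum_i \rr_i v_i = \sum_i \rr_i (V_n\brt)_i + t\sum_i \rr_i = \langle \br, V_n\brt\rangle + t\sum_i \rr_i$ (all real, so no conjugation issue), and similarly $\sum_i \rt_i u_i = \sum_i \rt_i (V_n^\tran \br)_i + t\sum_i \rt_i = \langle V_n^\tran \br, \brt\rangle + t\sum_i \rt_i = \langle \br, V_n\brt\rangle + t\sum_i \rt_i$. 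Equating the two expressions and cancelling $\langle \br, V_n\brt\rangle$ gives $t\sum_i \rr_i = t\sum_i \rt_i$, and dividing by $t>0$ yields \eqref{q_t:trace}.

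The main obstacle is the uniqueness claim: existence and the trace identity are essentially bookkeeping, but pinning down uniqueness rigorously requires either importing the right general theorem about Hermitized Dyson equations (and checking the hypotheses apply to this specialization — in particular that the regularization makes the relevant operator a strict contraction) or carrying out a careful direct estimate showing that differences of solutions must vanish. I would expect the cleanest route to be the Thompson-metric contraction argument, since the map $\Phi$ is built from concave/monotone operations on the positive cone and the additive shifts by $t$ ensure a uniform gap; this is the step where the hypothesis $t>0$ is genuinely used, and where care is needed to get a contraction constant strictly less than $1$.
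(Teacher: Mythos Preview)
The paper does not actually prove Proposition~\ref{prop:MEt}; it is stated as background recalled from the companion article \cite{cook2018non}, where it arises by specializing the Schwinger--Dyson (matrix Dyson) equations for the Hermitized resolvent $(\begin{smallmatrix} 0 & Y-z\\ (Y-z)^* & 0\end{smallmatrix} - i t)^{-1}$. In that framework existence and uniqueness of the positive solution are inherited from the general theory of Nevanlinna/Stieltjes-type self-consistent equations on the upper half-plane (Earle--Hamilton, or analytic uniqueness of the Stieltjes transform), and the trace identity~\eqref{q_t:trace} follows from the block structure of the Hermitized resolvent. So your approach is necessarily different from the paper's, which simply imports the result.

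Your derivation of the trace identity is correct and is in fact a pleasant elementary alternative to the operator-theoretic argument: exploiting that $r_i$ and $\rt_i$ share the same denominator $D_i$, your chain $\sum_i r_i v_i = \sum_i u_i v_i/D_i = \sum_i \rt_i u_i$ together with $\langle \br, V\brt\rangle = \langle V^\tran\br,\brt\rangle$ and division by $t>0$ is exactly right.

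There is, however, a small gap in your existence step. With only the crude denominator lower bound $s^2+t^2$, the proposed box $[a,b]^{2n}$ with $b=(t+R)/(s^2+t^2)$ is \emph{not} $\Phi$-invariant unless $s^2+t^2$ exceeds (a constant times) the maximum row/column sum of $V_n$: the self-consistency condition $C(t+R)\le R(s^2+t^2)$ fails when $s,t$ are small. The fix is immediate: for any $\rvec\succcurlyeq 0$ one has $v_i\ge t$ and hence $r_i = u_i/(s^2+u_iv_i)\le u_i/(u_iv_i)=1/v_i\le 1/t$, and symmetrically $\rt_i\le 1/t$. Thus $\Phi$ maps all of $[0,\infty)^{2n}$ into $[0,1/t]^{2n}$, and Brouwer on this box gives a fixed point, automatically strictly positive since each coordinate is at least $t/(s^2+(t+\smax^2/t)^2)$.

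For uniqueness you correctly flag the difficulty but do not close it. The subtraction/Lipschitz route you sketch does not obviously yield a contraction constant below~$1$ uniformly in $s$. The Thompson-metric argument can be made to work (the additive shift by $t$ is precisely what produces a strict contraction in the Hilbert projective metric), but it requires a careful computation you have not carried out. The cleanest route is the one the companion paper takes: recognize \eqref{def:MEt} as the imaginary-axis specialization of a holomorphic self-map of a product of upper half-planes, whence uniqueness is automatic from the Earle--Hamilton fixed-point theorem. If you want a self-contained argument, the Thompson-metric computation is the one to complete.
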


\begin{enumerate}[leftmargin=*,label={\bf A2}]
\item\label{ass:admissible}		
(Admissible variance profile).
Let $\rvec(s,t)=\rvec_n(s,t)\succ 0$ be the solution of the Regularized Master Equations for given $n\ge 1$. For all $s>0$, there exists a constant $C=C(s)>0$ such that
$$
\sup_{n\ge 1} \sup_{t\in (0,1]} \frac 1n \sum_{i\in [n]} \rr_i(s,t)\ \le \ C\ . 
$$
A family of variance profiles (or corresponding standard deviation/normalized variance profiles) for which the previous estimate holds is called {\em admissible}.
\end{enumerate}
\begin{rem} 
After restating the main theorems we list concrete conditions under which we verify \ref{ass:admissible}, namely \ref{ass:sigmin} (lower bound on $V_n$), \ref{ass:symmetric} (symmetric $V_n$) and \ref{ass:expander} (robust irreducibility for $V_n$), cf. section \ref{sec:sufficient}. 
\end{rem}

\subsection{Results from \cite{cook2018non}} 	 \label{results} 

Recall the Master Equations \eqref{def:ME}, and notice that these equations are
obtained from the Regularized Master Equations \eqref{def:MEt} by letting the
parameter $t$ go to zero. Notice however that condition $\sum q_i=\sum
\widetilde q_i$ is required for uniqueness and not a consequence of the
equations as in \eqref{def:MEt}.

In what follows, we will always tacitly assume the standard deviation profile $A_n$ is
{\it irreducible}.  This will cause no true loss of generality, as we can conjugate
the matrix $Y_n$ by an appropriate permutation matrix to put $A_n$ in
block-upper-triangular form with irreducible blocks on the diagonal. The 
spectrum of $Y_n$ is then the union of the spectra of the corresponding block
diagonal submatrices.

%\vspace{.2cm}
\begin{theo} [Cook et al. \cite{cook2018non}] 
\label{thm:master} 
Let $n\ge 1$ be fixed, let $A_n$ be an $n\times n$ nonnegative matrix and write $V_n=\frac1nA_n\odot A_n$. Assume that $A_n$ is irreducible. 
Then the following hold:
\begin{enumerate}
\item\label{q:sys1} 
For $s \geq \sqrt{\rho(V_n)}$ the system \eqref{def:ME} has the unique solution
$\qvec(s) = 0$.
\item\label{q:sys2} 
For $s\in (0,\sqrt{\rhoV})$ the system \eqref{def:ME} has a unique non-trivial
solution $\qvec(s) \posneq 0$. Moreover, this solution satisfies 
$\qvec(s)\succ 0$. 
\item\label{q=limr} $\qvec(s) = \lim_{t\downarrow 0} \rvec(s,t)$ for 
  $s\in (0, \infty)$. 
\item\label{q:contdif} 
The function $s\mapsto \qvec(s)$ defined in parts (1) and (2) is continuous on 
$(0,\infty)$ and is continuously differentiable on 
$(0,\sqrt{\rhoV}) \cup (\sqrt{\rhoV},\infty)$. 
\end{enumerate} 
\end{theo}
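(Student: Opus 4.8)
The plan is to realize the Master Equations \eqref{def:ME} as the $t\downarrow 0$ limit of the Regularized Master Equations \eqref{def:MEt}, leaning on Proposition \ref{prop:MEt}: for each $t>0$ there is a unique $\rvec(s,t)\succ 0$ solving \eqref{def:MEt} and obeying the trace identity \eqref{q_t:trace}. Part (1) is the one piece that needs no regularization. If $\qvec=(\bq,\bqt)\succcurlyeq 0$ solves \eqref{def:ME}, clearing denominators gives $s^2\bq\preccurlyeq V_n^\tran\bq$ and $s^2\bqt\preccurlyeq V_n\bqt$. Pairing the first with the (strictly positive) right Perron eigenvector $\bv$ of $V_n$, for which $\bv^\tran V_n^\tran=\rho(V_n)\,\bv^\tran$, yields $s^2\,\bv^\tran\bq\le\rho(V_n)\,\bv^\tran\bq$, so $\bv^\tran\bq=0$, hence $\bq=0$ when $s^2>\rho(V_n)$, and then the constraint $\sum q_i=\sum\qt_i$ forces $\bqt=0$. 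At the borderline $s^2=\rho(V_n)$, the equality case of Perron--Frobenius forces $\bq$ to be a multiple of $\bv$; if $\bq\succ 0$ then $(V_n^\tran\bq)_i=s^2q_i$ exactly, so the first line of \eqref{def:ME} forces $(V_n\bqt)_i(V_n^\tran\bq)_i=0$, whence $V_n\bqt=0$ and $\bqt=0$ by irreducibility, contradicting $\sum q_i=\sum\qt_i>0$; so again $\qvec=0$.

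For parts (2)--(3), fix $n$ and $s>0$. The main obstacle is the a priori bound $\sup_{0<t\le 1}\|\rvec(s,t)\|<\infty$. (When $s>\sqrt{\rho(V_n)}$ this is easy: $s^2\br\preccurlyeq V_n^\tran\br+t\1$ gives $\br\preccurlyeq t\,(s^2I-V_n^\tran)^{-1}\1\to 0$, and symmetrically for $\brt$, so $\rvec(s,t)\to 0$, consistent with part (1).) For the interesting range $s<\sqrt{\rho(V_n)}$ I would argue by contradiction: along $t_k\downarrow 0$ with $c_k:=\|\rvec(s,t_k)\|_\infty\to\infty$, set $\hat\rvec_k=\rvec(s,t_k)/c_k$ and pass to a subsequence with $\hat\rvec_k\to\hat\rvec^*=(\hat\br^*,\hat\brt^*)\succcurlyeq 0$, $\|\hat\rvec^*\|_\infty=1$. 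Dividing \eqref{def:MEt} by $c_k$ and using $c_k\to\infty$, $t_k/c_k\to 0$, the leading-order balance forces $\hat r^*_i=0$ unless $(V_n\hat\brt^*)_i=0$ and, symmetrically, $\hat{\rt}^*_i=0$ unless $(V_n^\tran\hat\br^*)_i=0$; meanwhile \eqref{q_t:trace} passes to $\sum\hat r^*_i=\sum\hat{\rt}^*_i$, so neither limiting part is zero. One reads off that the supports $S=\operatorname{supp}\hat\br^*$ and $\widetilde S=\operatorname{supp}\hat\brt^*$ are nonempty, disjoint, and that no entry $(V_n)_{ij}$ with $i\in S$, $j\in\widetilde S$ is positive; pushing this forward along the directed graph of $V_n$ should contradict irreducibility. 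This a priori control near $t=0$ is, I expect, where the bulk of the work lies, and in \cite{cook2018non} it is developed through the ``graphical bootstrapping'' argument. Granting the bound, every subsequential limit $\qvec$ of $\rvec(s,t)$ as $t\downarrow 0$ is $\succcurlyeq 0$, solves \eqref{def:ME} (the $t$-terms dropping out) together with the constraint inherited from \eqref{q_t:trace}, and by the uniqueness below is independent of the subsequence — giving part (3).

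For part (2): when $s<\sqrt{\rho(V_n)}$ I would show $\qvec(s)\neq 0$ by testing \eqref{def:MEt} against $\bv^\tran$ and using $s^2<\rho(V_n)$ to pull out a lower bound $\bv^\tran\rvec(s,t)\gtrsim 1$ uniform in small $t$, so the limit cannot vanish; irreducibility then upgrades $\qvec(s)\posneq 0$ to $\qvec(s)\succ 0$, since a single vanishing coordinate propagates through \eqref{def:ME} to $\bq=\bqt=0$. Uniqueness of the nontrivial solution is the most delicate algebraic point: the strategy is to show any nontrivial $\qvec\succcurlyeq 0$ solving \eqref{def:ME} is in fact $\succ 0$ and nondegenerate (the Jacobian of the system together with the normalization $\sum q_i=\sum\qt_i$ is nonsingular there), then to continue it, via the implicit function theorem, to a branch of solutions of \eqref{def:MEt} for small $t$; since by Proposition \ref{prop:MEt} that branch is unique and converges to the given $\qvec$, there is only one nontrivial solution. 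The same nondegeneracy underlies part (4): on $(0,\sqrt{\rho(V_n)})$ (and trivially on $(\sqrt{\rho(V_n)},\infty)$, where $\qvec\equiv 0$) the implicit function theorem makes $s\mapsto\qvec(s)$ continuously differentiable, and continuity at the single point $s=\sqrt{\rho(V_n)}$ follows from part (1): a uniform-in-$s$ version of the a priori bound makes $\{\qvec(s)\}$ precompact as $s\uparrow\sqrt{\rho(V_n)}$, and any limit point solves \eqref{def:ME} at $s=\sqrt{\rho(V_n)}$, hence equals $0$. In short, the two genuine difficulties are the a priori bound near $t=0$ and the nondegeneracy of the solutions, both of which I would import from (or re-derive along the lines of) \cite{cook2018non}.
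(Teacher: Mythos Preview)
This theorem is not proved in the present paper: it is quoted verbatim from the companion article \cite{cook2018non} (see the attribution in the theorem heading), so there is no in-paper proof to compare against. What can be said is whether your outline matches the strategy of \cite{cook2018non}, and whether it is internally sound.

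Your high-level architecture is the right one and is indeed that of \cite{cook2018non}: Perron--Frobenius for part~(1); compactness of $\{\rvec(s,t)\}_{t\in(0,1]}$ to extract limits for part~(3); a spectral/nondegeneracy argument for uniqueness and differentiability in parts~(2) and~(4). Your treatment of part~(1) is essentially complete (a minor cosmetic point: in the borderline case $s^2=\rho(V_n)$, the subinvariance theorem gives that a nonzero $\bq\succcurlyeq 0$ with $\rho(V_n)\bq\preccurlyeq V_n^\tran\bq$ is automatically the Perron eigenvector of $V_n^\tran$, hence $\succ 0$; you jumped straight to ``if $\bq\succ 0$'', which is fine once this is said).

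The one place where your sketch has a genuine gap is the rescaling contradiction for the a~priori bound. After dividing \eqref{def:MEt} by $c_k=\|\rvec(s,t_k)\|_\infty$, the denominators scale like $c_k^3$ on coordinates where both $(V_n\hat\brt^*)_i$ and $(V_n^\tran\hat\br^*)_i$ are positive, like $c_k$ where both vanish, and ambiguously (depending on $c_k t_k$) on the mixed coordinates; your stated dichotomy ``$\hat r^*_i=0$ unless $(V_n\hat\brt^*)_i=0$'' is not what falls out, and the conclusion that the supports $S,\widetilde S$ are disjoint with no $V_n$-edge from $S$ to $\widetilde S$ does not by itself contradict irreducibility (that would only say there is no edge in one direction between two particular sets). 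You are right that this is where the real work lies; in \cite{cook2018non} the bound is obtained not by this kind of blow-up argument but via explicit recursive inequalities on $\rvec(s,t)$ (the ``graphical bootstrapping''), and you should expect to import that rather than the rescaling sketch.

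Your proposed route to uniqueness---continue an arbitrary nontrivial solution of \eqref{def:ME} to a branch of solutions of \eqref{def:MEt} via the implicit function theorem, then invoke the uniqueness in Proposition~\ref{prop:MEt}---is different from the direct comparison argument in \cite{cook2018non} (their Lemma~4.3 builds a nonnegative operator $K_{\qvec,\qvec'}$ from two putative solutions and shows its Perron--Frobenius eigenvalue forces $\qvec=\qvec'$). Your approach is viable and arguably cleaner, but it rests entirely on the nondegeneracy of the augmented Jacobian at an \emph{arbitrary} strictly positive solution, which is precisely the content of \cite[Lemma~4.4]{cook2018non} (and is reused in Section~\ref{app:proof-prop:LB} here). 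So both difficulties you flag---the a~priori bound and the nondegeneracy---are exactly the two lemmas carrying the proof in \cite{cook2018non}.
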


\begin{rem}[Convention]
Above and in the sequel we abuse notation and write $\qvec=\qvec(s)$ to mean a solution of the equation \eqref{def:ME}, understood to be the nontrivial solution for $s\in (0,\sqrt{\rho(V)})$. 
\end{rem}
The main result of \cite{cook2018non} is the following.
\begin{theo}[Cook et al. \cite{cook2018non}]	\label{thm:main}	%\label{th:main}
Let $(Y_n)_{n\ge1}$ be a sequence of random matrices as in Definition \ref{def:model}, and assume \ref{ass:moments}, \ref{ass:sigmax} and \ref{ass:admissible} hold. Assume moreover that $A_n$ is irreducible for all $n\ge 1$.
\begin{enumerate}
\item There exists a sequence of deterministic measures $(\mu_n)_{n\ge 1}$ on $\C$ such that
\[
\mu_n^Y \sim \mu_n\quad \mbox{ in probability}.
\]
\item Let $\bq(s),\bqt(s)$ be as in Theorem \ref{thm:master}, and for $s\in (0,\infty)$ 
let
\begin{equation}	\label{expF}
F_n(s) = 1-\frac1n\langle \bq(s),V_n \bqt(s)\rangle.
\end{equation}
Then $F_n$ extends to an absolutely continuous function on $[0,\infty)$ which is the CDF of a probability measure with support contained in $[0,\sqrt{\rho(V_n)}]$ and continuous density on $(0,\sqrt{\rho(V_n)})$.
\item For each $n\ge1$ the measure $\mu_n$ from part (1) is the unique radially symmetric probability measure on $\C$ with $\mu_n(\{z:|z|\le s\})= F_n(s)$ for all $s\in (0,\infty)$.
\end{enumerate}
\end{theo}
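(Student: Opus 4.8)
The plan is to run Girko's Hermitization. For $z\in\C$, form the $2n\times 2n$ Hermitian matrix
\[
\mathcal H_n(z)=\begin{pmatrix} 0 & Y_n-z\\ (Y_n-z)^* & 0\end{pmatrix},
\]
whose spectrum is the symmetrization of the singular values $\{\sigma_i(Y_n-z)\}$; write $\nu_n^z$ for its ESD and $g_n(z,\eta)=\frac1{2n}\tr(\mathcal H_n(z)-\ii\eta)^{-1}$ for its Stieltjes transform. Since the logarithmic potential of $\mu_n^Y$ at $z$ is $-\int_\R\log|x|\,\nu_n^z(dx)$, the replacement principle of Bordenave--Chafa\"i (equivalently Tao--Vu) reduces $\mu_n^Y\sim\mu_n$ in probability to: (i) for Lebesgue-a.e.\ $z$, $g_n(z,\cdot)$ converges in probability to a deterministic $g_n^\circ(z,\cdot)$ governed by a Schwinger--Dyson system; and (ii) $\log|\cdot|$ is uniformly integrable against $\nu_n^z$, i.e.\ the small singular values of $Y_n-z$ are controlled.

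For (i), fix $z$ with $|z|=s$ and a regularization scale $\eta>0$. The diagonal blocks of $(\mathcal H_n(z)-\ii\eta)^{-1}$ are, to leading order, diagonal matrices whose entries $\br(s,\eta),\brt(s,\eta)$ solve a vector self-consistent equation built from $V_n$; letting $\eta\downarrow0$ this becomes the Regularized Master Equations \eqref{def:MEt} (Proposition~\ref{prop:MEt}), and \ref{ass:admissible} is exactly the statement that $\frac1n\sum_i\rr_i(s,t)$ stays bounded, which forces the limit $t\downarrow0$ to exist and equal $\qvec(s)$ --- this is Theorem~\ref{thm:master}(3). Concentration of $g_n$ around $g_n^\circ$ at fixed $\eta$ is a standard martingale-difference/resolvent-expansion argument, \ref{ass:moments} controlling the fluctuations of the relevant quadratic forms in the entries of $X_n$; the self-improving structure of the self-consistent equation then pushes this control down to a scale $\eta\asymp n^{-c}$. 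Integrating $\Imm g_n^\circ$ against $\log$ in $\eta$ and using \ref{ass:sigmax} yields a deterministic $u_n(z)$, the a.e.\ limit of $U_{\mu_n^Y}(z)$; a radial computation identifies $u_n$ with the logarithmic potential of the radially symmetric measure whose radial CDF is the function $F_n(s)=1-\frac1n\langle\bq(s),V_n\bqt(s)\rangle$ of \eqref{expF} (shown to be a bona fide CDF below).

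The crux --- and what I expect to be the main obstacle --- is (ii): controlling the small singular values of $Y_n-z$ when $A_n$ has a large proportion of vanishing entries. One needs a polynomial lower bound $\sigma_{\min}(Y_n-z)\gtrsim n^{-C}$ with probability $1-o(1)$, obtained by conditioning on all but one column of $Y_n-z$ and invoking anti-concentration of the centered entries of $X_n$ (a Rudelson--Vershynin-type invertibility argument, here sensitive to the support pattern of $A_n$), together with a bound $\#\{i:\sigma_i(Y_n-z)\le\delta\}\lesssim n\delta$ for moderate $\delta$, which follows from the near-real-axis control of $g_n$ from (i), i.e.\ from \ref{ass:admissible}. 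The vanishing entries force one to track how large or small the $\rr_i$ and $q_i$ can get: a bootstrapping argument over the bipartite graph of nonzero entries of $A_n$ (``graphical bootstrapping'') propagates a priori two-sided bounds on the solutions of \eqref{def:MEt}--\eqref{def:ME}, which underlies admissibility and prevents the Hermitized model from developing an atom at $0$ that would spoil (ii).

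It remains to assemble the pieces. For part (2), invoke Theorem~\ref{thm:master}: $\qvec(\cdot)$ is continuous on $(0,\infty)$, vanishes on $[\sqrt{\rho(V_n)},\infty)$, and is $C^1$ on $(0,\sqrt{\rho(V_n)})$ and on $(\sqrt{\rho(V_n)},\infty)$, so $F_n$ is continuous, equals $1$ beyond $\sqrt{\rho(V_n)}$, and is $C^1$ off the single point $\sqrt{\rho(V_n)}$; differentiating the fixed-point relations in $s$ shows $s\mapsto\langle\bq(s),V_n\bqt(s)\rangle$ is nonincreasing, so $F_n$ is nondecreasing, hence (being continuous, monotone, $C^1$ off one point and bounded) it extends to an absolutely continuous CDF on $[0,\infty)$ supported in $[0,\sqrt{\rho(V_n)}]$, possibly with $F_n(0^+)>0$ (an atom of $\mu_n$ at the origin); note $\rho(V_n)\le\smax^2$ by \ref{ass:sigmax}, so these supports are uniformly bounded. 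This legitimizes the definition in part (3), a radially symmetric probability measure being determined by its radial CDF. Part (1) then follows from (i)--(ii): the replacement principle gives $\mu_n^Y\sim\nu$ in probability, where $\nu$ has logarithmic potential $u_n$, and by the radial computation of the second paragraph $\nu=\mu_n$.
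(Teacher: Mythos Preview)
This theorem is not proved in the present paper; it is recalled without proof from the companion article \cite{cook2018non} (note the attribution in the theorem heading and its placement in Section~\ref{sec:results} under ``Results from \cite{cook2018non}''). Your outline is a faithful high-level summary of the Hermitization strategy carried out in that companion work---Girko's Hermitization of $Y_n-z$, the Schwinger--Dyson equations for the Hermitized resolvent leading to the Regularized Master Equations, control of the smallest singular value via a Rudelson--Vershynin-type argument, the Wegner-type estimate on moderately small singular values coming from admissibility, and the graphical bootstrapping for vanishing entries---so there is no in-paper proof to compare against, and your sketch matches the actual approach.

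One small correction to your phrasing: the self-consistent equations for the diagonal of the Hermitized resolvent at spectral parameter $\ii\eta$ \emph{are} (essentially) the Regularized Master Equations~\eqref{def:MEt} with $t$ playing the role of $\eta$; it is letting $t\downarrow 0$ (not $\eta\downarrow 0$ into \eqref{def:MEt}) that yields the Master Equations~\eqref{def:ME}, via Theorem~\ref{thm:master}\eqref{q=limr}. Otherwise the structure you describe is the one used.
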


This theorem calls for some comments. Using the fact that $\mu_n$ is radially 
symmetric along with the properties of $F_n(s) = \mu_n(\{z:|z|\le s\})$, it is
straightforward that $\mu_n$ has a density $f_n$ on $\C\setminus\{ 0 \}$ which 
is given by the formula 
\begin{equation} 
\label{eq:density} 
f_n(z) \ =\ \frac 1{2\pi |z|} \frac{d}{ds}  F_n(s)
\Big|_{s=|z|}\ =\  - \frac 1{2\pi n |z|} 
  \frac{d}{ds} \langle \bq(s),  V \bqt(s) \rangle \Big|_{s=|z|} 
\end{equation} 
for $|z| \not\in \{ 0, \sqrt{\rho(V_n)} \}$. We use the convention  
$f_n(z) = 0$ for $|z| = \sqrt{\rho(V_n)}$.

\subsection{Sufficient conditions for admissibility}\label{sec:sufficient}

We now recall a series of assumptions that enforce \ref{ass:admissible} and are
directly checkable from the variance profiles $(V_n)$ without solving a priori
the regularized master equations.
\begin{enumerate}[leftmargin=*, label={\bf A3}]
\item\label{ass:sigmin}		
(Lower bound on variances).
There exists $\smin>0$ such that
$$
\inf_n \min_{1\le i,j\le n} \sigma_{ij}^{(n)} \ge \smin.
$$
\end{enumerate}
\begin{enumerate}[leftmargin=*, label={\bf A4}]
\item\label{ass:symmetric}		
(Symmetric variance profile). For all $n\ge 1$, the normalized variance profile (or equivalently the standard deviation profile) is symmetric: 
$V_n= V_n^\tran\ .$
\end{enumerate}

The following assumption is a quantitative form of irreducibility that considerably generalizes \ref{ass:sigmin}, allowing a broad class of sparse variance profiles . We refer the reader to \cite{cook2018non} for the definition.

\begin{enumerate}[leftmargin=*,label={\bf A5}]
\item\label{ass:expander}		
(Robust irreducibility).
There exists constants $\scut,\delta,\kappa\in (0,1)$ such that for all $n\ge
1$, the matrix $A_n(\scut) = \left( \sigma_{ij} \1_{\sigma_{ij} \geq \scut}
\right)$ is $(\delta,\kappa)$-robustly irreducible.
\end{enumerate}

We gather in the following theorem some results from \cite{cook2018non}, namely Propositions 2.5 and 2.6, as well as Theorem 2.8.

\begin{theo}[Cook et al. \cite{cook2018non}]  Let $(A_n)$ be a family of standard deviation profiles for which \ref{ass:sigmax} holds. If either \ref{ass:sigmin}, \ref{ass:symmetric}, or \ref{ass:expander} holds then \ref{ass:admissible} also holds, in other words, the family $(A_n)$ is admissible. 
\end{theo}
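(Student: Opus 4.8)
The plan is to verify assumption \ref{ass:admissible} under each of \ref{ass:sigmin}, \ref{ass:symmetric}, and \ref{ass:expander} separately; the first two are elementary and even yield bounds uniform in $s$, whereas the third is the substantial case. Throughout, fix $n$, $s>0$ and $t\in(0,1]$, and let $\rvec=(\br,\brt)\succ0$ be the solution of the Regularized Master Equations \eqref{def:MEt} provided by Proposition~\ref{prop:MEt}, together with the identity $\sum_i\rr_i=\sum_i\rt_i$. I would first record the basic a priori estimates. Dropping the nonnegative summand $s^2$ from the denominator of the first line of \eqref{def:MEt} and cancelling $(V_n^\tran\br)_i+t$ gives $\rr_i\le\frac{1}{(V_n\brt)_i+t}$, and symmetrically $\rt_i\le\frac{1}{(V_n^\tran\br)_i+t}$; in particular $\rr_i(V_n\brt)_i\le1$ for every $i$, so summing gives the uniform \emph{energy bound} $\frac1n\langle\br,V_n\brt\rangle\le1$. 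Dropping instead the product term from the denominator gives $\rr_i\le\frac{(V_n^\tran\br)_i+t}{s^2}$ and likewise for $\rt_i$. Finally, \ref{ass:sigmax} bounds the row and column sums of $V_n$ by $\smax^2$.

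Under \ref{ass:sigmin}, every entry of $V_n$ is at least $\smin^2/n$, so $V_n\succcurlyeq\frac{\smin^2}{n}\1_n\1_n^\tran$ entrywise, and therefore $\langle\br,V_n\brt\rangle\ge\frac{\smin^2}{n}\|\br\|_1\,\|\brt\|_1=\frac{\smin^2}{n}\|\br\|_1^2$ using $\br,\brt\succcurlyeq0$ and $\|\br\|_1=\|\brt\|_1$; combined with the energy bound this gives $\|\br\|_1^2\le n^2/\smin^2$, hence $\frac1n\sum_i\rr_i\le1/\smin$. Under \ref{ass:symmetric}, interchanging $\br$ and $\brt$ leaves \eqref{def:MEt} unchanged, so uniqueness in Proposition~\ref{prop:MEt} forces $\br=\brt$, and the first equation reduces to $\rr_i=a_i/(s^2+a_i^2)$ with $a_i=(V_n\br)_i+t>0$; since $a/(s^2+a^2)\le1/(2s)$ for all $a>0$, this gives $\frac1n\sum_i\rr_i\le1/(2s)$.

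The remaining case \ref{ass:expander} is the crux, and I expect essentially all the difficulty to lie here. Since the proportion of nonzero entries of $V_n$ may tend to zero, the crude estimates above no longer close, and the constant must be allowed to depend on $s$, degrading as $s\downarrow0$ — already the symmetric case shows this is unavoidable. The regime $s>\smax$ stays trivial: summing $\rr_i\le\frac{(V_n^\tran\br)_i+t}{s^2}$ gives $\|\br\|_1\le\frac{1}{s^2}(\langle V_n\1_n,\br\rangle+nt)\le\frac{\smax^2}{s^2}\|\br\|_1+\frac{n}{s^2}$, hence $\frac1n\|\br\|_1\le(s^2-\smax^2)^{-1}$. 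For $s\le\smax$ the plan is the \emph{graphical bootstrapping} argument of \cite{cook2018non}, run by contradiction: if $\frac1n\|\br\|_1$ were not bounded over $n$ and $t\in(0,1]$ for some fixed $s$, one first uses the a priori inequalities to localize the excess mass on a \emph{heavy} set of coordinates where $\rr_i$ is atypically large — equivalently, where the inflow $(V_n\brt)_i$ is atypically small — with the parallel statement for $\brt$. One then propagates pointwise control of $\rr_i$ and $\rt_i$ along paths in the weighted graph of the truncated profile $A_n(\scut)$: the inequalities $\rr_i\le\frac{(V_n^\tran\br)_i+t}{s^2}$ and $\rt_i\le\frac{(V_n\brt)_i+t}{s^2}$ express the value at a coordinate as a controlled weighted average over its graph-neighbours, while $\rr_i(V_n\brt)_i\le1$ couples $\br$ and $\brt$ in the opposite direction. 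Robust irreducibility of $A_n(\scut)$ guarantees that from any heavy set a positive fraction of all coordinates is reached in a number of steps depending only on $(\scut,\delta,\kappa)$ and with uniformly positive accumulated weight; substituted back into the energy bound $\frac1n\langle\br,V_n\brt\rangle\le1$, this forces the mass to be spread over $\Theta(n)$ coordinates at a bounded level, contradicting the assumed blow-up. The delicate — and main — point is the quantitative bookkeeping: one must trade off how much the bound may deteriorate at each propagation step against how fast robust irreducibility spreads the mass, so that the recursion closes after boundedly many steps with a finite constant $C(s)$. This is precisely the analysis performed in \cite{cook2018non}.
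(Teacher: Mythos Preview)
The paper does not actually prove this theorem: it is stated as a summary of results imported from the companion paper \cite{cook2018non} (specifically Propositions~2.5 and~2.6 and Theorem~2.8 there), with no argument given here. Your proposal is a faithful reconstruction of those cited results: the bound $\frac1n\sum_i r_i\le 1/\smin$ under \ref{ass:sigmin} is exactly the content of \cite[Proposition~2.5]{cook2018non} (and is invoked verbatim later in this paper, see \eqref{eq:max-bound}); the symmetric case $\br=\brt$ with the pointwise bound $1/(2s)$ is \cite[Proposition~2.6]{cook2018non}; and your outline of the graphical bootstrapping under \ref{ass:expander} correctly identifies the mechanism of \cite[Theorem~2.8]{cook2018non}, which is indeed where all the work lies and which you rightly defer to that reference.
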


\subsection{Positivity of the density of $\mu_n$} 
\label{sec:behavior-zero}

In \cite[Lemma 4.1]{alt2018local}, it is shown that under Assumption \ref{ass:sigmin}, the density of $\mu_n$ is strictly positive on the disk $\sqrt{\rho(V)}$. We begin by giving a more general assumption under which the density of $\mu_n$, is uniformly bounded from below on its support.

We recall the following definition used in \cite{AEKgram}:
\begin{Def}
A $K \times K$ matrix $T = (t_{ij})_{i,j=1}^K$ with nonnegative entries is called fully indecomposable if for any two subsets $I, J \subset \{1, \ldots, K   \}$ such that $|I| + |J| \geq K$, the submatrix $(t_{ij})_{i\in I,j \in J}$ contains a nonzero entry.
\end{Def}
See \cite{bap-rag-(book)97} for a detailed account on these matrices. 

\begin{enumerate}[leftmargin=*,label={\bf A6}]
\item\label{ass:BFID} (Block fully indecomposable) For all $n \geq 1$, the normalized variance profiles $V_n$ are {\em block fully indecomposable}, i.e.  there are constants $\phi
> 0$, $K \in \N$ independent from $n\ge 1$, a fully indecomposable matrix $Z =
(z_{ij})_{i,j\in [K]}$, with $z_{ij} \in \{0,1\}$ and a partition $(I_j)_{j\in
[K]}$ of $[n]$ such that \[|I_i| \, =\,  \frac{n}{K}, \qquad V_{xy} \, \geq\,
\frac{\phi}{n} z_{ij}, \qquad x \in I_i \quad\text{and}\quad y \in I_j \] for
all $i,j \in [K]$.  
\end{enumerate}
Assumption \ref{ass:BFID} can be seen as a robust version of the full
indecomposability of the matrix $V$. It is well known that the full
indecomposability implies the irreducibility of a matrix. Therefore, one can 
expect that the block full indecomposability implies the robust irreducibility. 
Indeed, the following is an immediate consequence of \cite[Lemma 2.4]{cook2018non}.
\begin{prop}
\label{BFID=>RI} 
\ref{ass:BFID} implies~\ref{ass:expander}.  
\end{prop}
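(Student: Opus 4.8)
The plan is to deduce the robust irreducibility of the thresholded profile $A_n(\scut)$ directly from \ref{ass:BFID} by combining two elementary observations: first, that block full indecomposability forces a uniform lower bound on the entries of $V_n$ along a ``combinatorially rich'' set of positions, and second, that the cited \cite[Lemma 2.4]{cook2018non} already converts such a structural lower bound into robust irreducibility. So the bulk of the work is to exhibit the constants $\scut, \delta, \kappa$ in \ref{ass:expander} as explicit functions of $\phi$ and $K$ from \ref{ass:BFID}.

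First I would unpack \ref{ass:BFID}: we are given a partition $(I_j)_{j\in[K]}$ of $[n]$ into blocks of equal size $n/K$, a fully indecomposable $0$-$1$ matrix $Z=(z_{ij})_{i,j\in[K]}$, and a constant $\phi>0$ with $V_{xy}\ge \frac{\phi}{n} z_{ij}$ whenever $x\in I_i$, $y\in I_j$. Since $V_n = \frac1n A_n\odot A_n$, this means $\sigma_{xy}^2 \ge \phi z_{ij}$ on those blocks, i.e.\ $\sigma_{xy}\ge\sqrt{\phi}$ there. Setting $\scut := \sqrt{\phi}$ (shrinking it if necessary so that $\scut\in(0,1)$, which costs nothing since the thresholded matrix only grows as $\scut$ decreases), the thresholded standard deviation profile $A_n(\scut)$ has the property that its $0$-$1$ support pattern dominates the ``blown-up'' matrix $Z\otimes J_{n/K}$, where $J_{n/K}$ is the all-ones $(n/K)\times(n/K)$ block. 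In other words, $A_n(\scut)$ contains, as a subgraph on the bipartite adjacency level, a complete bipartite block between $I_i$ and $I_j$ whenever $z_{ij}=1$.

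Next I would invoke \cite[Lemma 2.4]{cook2018non}, which (per the statement of Proposition~\ref{BFID=>RI} being an ``immediate consequence'' of it) packages exactly this kind of deduction: a matrix whose support pattern block-dominates a blow-up of a fully indecomposable matrix, with a uniform lower bound on the surviving entries, is $(\delta,\kappa)$-robustly irreducible for constants $\delta,\kappa$ depending only on $K$ and $\phi$ (through the combinatorics of $Z$ and the block size ratio $1/K$). The point is that full indecomposability of $Z$ is the ``base case'' of irreducibility with a quantitative margin, and tensoring with $J_{n/K}$ preserves the relevant expansion/connectivity estimates uniformly in $n$; the threshold $\scut$ is chosen precisely so that all the edges witnessing indecomposability survive the truncation. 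I would then simply record that the constants produced are independent of $n$, which is automatic because $K$, $\phi$, and $Z$ are.

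The main obstacle is essentially bookkeeping rather than mathematics: one must check that the precise hypotheses of \cite[Lemma 2.4]{cook2018non} — whatever exact normalization of ``block-domination of a fully indecomposable pattern'' it requires — are met by the blow-up $Z\otimes J_{n/K}$ with the lower bound $\phi/n$, and that the definition of $(\delta,\kappa)$-robust irreducibility used in \ref{ass:expander} matches the conclusion of that lemma. Since the present paper explicitly defers the definition of robust irreducibility to \cite{cook2018non} and asserts the implication is immediate from that lemma, the proof reduces to this verification, and no genuinely new estimate is needed.
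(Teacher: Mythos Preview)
Your proposal is correct and follows exactly the paper's approach: the paper gives no argument beyond the single sentence that the implication is ``an immediate consequence of \cite[Lemma 2.4]{cook2018non},'' and your unpacking (threshold at $\scut=\sqrt{\phi}$ so that $A_n(\scut)$ dominates the blow-up $Z\otimes J_{n/K}$, then invoke that lemma) is precisely the intended verification. If anything, you have supplied more detail than the paper itself.
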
 
\begin{rem} \label{rem:sinkhorn} 
In \cite{Sinkhorn2}  full indecomposability is shown to be equivalent to the existence and the uniqueness, up to scaling, of positive diagonal matrices $D_1$ and $D_2$ such that $D_1 V D_2$ is doubly stochastic. Below, in Proposition~\ref{prop:LB} and in particular \eqref{eq:sink}, we see under Assumption \ref{ass:BFID}, $\diag(\bq)V\diag(\widetilde \bq)$ is doubly stochastic. Under Assumption \ref{ass:BFID} an optimal local law for square Gram matrices was proven in \cite{alt2018local}. The boundedness of the density near zero for Hermitian random matrices under the analogous conditions was proven in \cite{AEKquadequations}. \end{rem}
The behavior of $\mu_n$ near zero is an interesting problem.  By Theorem
\ref{thm:main}, $F_n$ admits a limit as $s\downarrow 0$. Is this limit positive
(atom) or equal to zero (no atom)? Is its derivative finite at $z=0$ (finite
density), zero (vanishing density), or does it blow up at $z=0$? In Section \ref{sec:examples}, we give various examples that shed additional light on these questions. 

The next proposition provides an explicit formula for the density $f_n$ at zero under Assumption \ref{ass:BFID}. In Section \ref{sec:singular.profile}, Proposition \ref{prop:singular.profile} provides an example of a simple variance profile with large zero blocks where $\mu_n$ admits a closed-form expression with an atom and a vanishing density at $z=0$. Section \ref{sec:unbounded} and Proposition \ref{prop:blow-up-density} provide an example of a symmetric separable sampled variance profile $\sigma_{ij}= d(i/n) d(j/n)$ where function $d$ is continuous and vanishes at zero. Depending on the function $d$, the density may or may not blow up at $z=0$.
\begin{prop}[No atom and bounded density near zero]
\label{prop:LB} 
Consider a sequence $(V_n)$ of normalized variance profiles and assume that
\ref{ass:sigmax} and \ref{ass:BFID} hold. Let $\qvec(s)$ be as in Theorem
\ref{thm:master}, let $\mu_n$ be as in Theorem \ref{thm:main}, and let 
$\rvec(s,t)=\begin{pmatrix} \br(s,t) \\ \brt(s,t) \end{pmatrix}$ be as in 
Proposition~\ref{prop:MEt}. Then, 
\begin{enumerate}
\item \label{item1:LB} 
The limits $\lim_{t\downarrow 0}\rvec(0,t)$ and $\lim_{s\downarrow 0} \qvec(s)$ 
exist and are equal. 
Writing $\q(0)= (q_i(0)) = \lim_{s\downarrow 0} \q(s)$ and 
$\qtilde(0)= (\tilde q_i(0)) = \lim_{s\downarrow 0} \qtilde(s)$, it holds that 
\begin{equation} 
\label{eq:sink}
q_i(0) (V_n\qtilde(0))_i=1 \quad \textrm{and}\quad 
\tilde q_i(0) (V_n^\tran \q(0))_i=1\,, \quad i\in [n]\ .
\end{equation}
In particular, the probability measure $\mu_n$ has no atom at zero: 
$\mu_n(\{0\})=0\ .$
\item \label{item2:LB} The density $f_n$ of $\mu_n$ on $\C\setminus\{ 0 \}$ 
admits a limit as $z\to 0$. This limit $f_n(0)$ is given by 
$$
f_n(0) = \frac 1n \sum_{i\in[n]} 
 \frac 1{(V_n^\tran\q(0))_i (V_n\qtilde(0))_i} 
 = \frac 1n \sum_{i\in[n]} q_i(0) \widetilde q_i(0)\ .
$$
In particular, there exist finite constants $\kappa,K$ independent of $n\ge 1$ such that 
\begin{equation}\label{eq:unif-bounds-density-0}0< \kappa \le f_n(0) \le K\ .
\end{equation}
\end{enumerate}
\end{prop}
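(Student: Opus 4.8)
The plan is to reduce the statement to the fact that \emph{at $s=0$} the Regularized Master Equations \eqref{def:MEt} degenerate into a regularised Sinkhorn system. Putting $s=0$ in \eqref{def:MEt} and cancelling the common denominator, one sees that $\rvec(0,t)$ is characterised by
\begin{equation*}
\rr_i\big((V_n\brt)_i+t\big)=1,\qquad \rt_i\big((V_n^\tran\br)_i+t\big)=1,\qquad i\in[n],
\end{equation*}
and formally letting $t\downarrow0$ this becomes $q_i(0)(V_n\qtilde(0))_i=1$ and $\tilde q_i(0)(V_n^\tran\q(0))_i=1$ — precisely \eqref{eq:sink}, i.e.\ that $\diag(\q(0))\,V_n\,\diag(\qtilde(0))$ is doubly stochastic. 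So the whole point is to make these limits rigorous: to show that they exist, are finite and strictly positive, and — crucially — uniformly so in $n$.

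The technical heart, and the only place where \ref{ass:BFID} is genuinely used, is a \emph{uniform two-sided Sinkhorn bound}: there are constants $0<c\le C<\infty$ independent of $n$ with $c\le\rr_i(s,t),\rt_i(s,t)\le C$, hence also $c\le q_i(s),\tilde q_i(s)\le C$, for all $i\in[n]$ and all small $s,t$. I would obtain this by combining \ref{ass:sigmax} — which makes the entries of $V_n$ of order $1/n$ with $O(1)$ row and column sums, so that the scalings are of order $1$ — with the lower bound $V_n\succcurlyeq\tfrac{\phi}{n}\,Z'$, where $Z'$ is the blow-up along $(I_j)$ of the fixed fully indecomposable $0/1$ matrix $Z$, and invoking the quantitative theory of fully indecomposable matrices exactly as in \cite[Lemma~4.1]{alt2018local} and \cite{AEKgram}. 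Granted this bound, part \eqref{item1:LB} becomes soft: the families $\{\qvec(s)\}_{s\downarrow0}$ and $\{\rvec(0,t)\}_{t\downarrow0}$ are precompact and bounded away from $0$; every limit point is a strictly positive solution of \eqref{eq:sink} which additionally satisfies $\sum_iq_i=\sum_i\tilde q_i$ (inherited from \eqref{def:ME}); and such a solution is unique — this is Sinkhorn uniqueness up to scaling (Remark~\ref{rem:sinkhorn}) together with the single normalisation $\sum_iq_i=\sum_i\tilde q_i$. Hence both limits exist and coincide, and \eqref{eq:sink} holds. The absence of an atom at $0$ then follows at once: since $\qvec(s)\to\qvec(0^+)$ as $s\downarrow0$,
\begin{equation*}
\lim_{s\downarrow0}F_n(s)=1-\frac1n\langle\q(0),V_n\qtilde(0)\rangle=1-\frac1n\sum_{i\in[n]}q_i(0)(V_n\qtilde(0))_i=1-\frac1n\sum_{i\in[n]}1=0,
\end{equation*}
so that $\mu_n(\{0\})=\lim_{s\downarrow0}\mu_n(\{|z|\le s\})=\lim_{s\downarrow0}F_n(s)=0$.

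For part \eqref{item2:LB} I would use that \eqref{def:ME} depends on $s$ only through $u:=s^2$, so $u\mapsto\qvec$ extends past $u=0$; the uniform bounds above together with the regularity from \cite{cook2018non} let one differentiate at $u=0$, the key point being that the linearisation of the Sinkhorn system \eqref{eq:sink} is invertible transversally to the scaling direction, again by full indecomposability. Writing $G_n(s):=\langle\q(s),V_n\qtilde(s)\rangle=\gamma_n(s^2)$ with $\gamma_n$ smooth near $0$, formula \eqref{eq:density} shows that $f_n$ extends continuously to $z=0$ and that $f_n(0)$ is determined by $\gamma_n'(0)$. Finally, differentiating \eqref{def:ME} once at $u=0$ and using \eqref{eq:sink} to cancel every off-diagonal contribution evaluates $\gamma_n'(0)$ in terms of $\sum_{i\in[n]}q_i(0)\tilde q_i(0)$, which produces the stated expression for $f_n(0)$; the uniform bound \eqref{eq:unif-bounds-density-0} is then immediate from the Sinkhorn bounds, with $\kappa,K$ of order $c^2,C^2$.

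I expect the genuine obstacle to be the uniform Sinkhorn estimate: upgrading the qualitative assertion ``the diagonal scalings are $O(1)$'' to bounds that are \emph{uniform in the dimension} requires the combinatorics of block fully indecomposable matrices, and is precisely why \ref{ass:BFID} — rather than bare irreducibility — is imposed: an irreducible but not fully indecomposable profile need not be scalable to doubly stochastic at all, so the scalings can degenerate as $s,t\downarrow0$. A secondary technical point is the differentiability of $\qvec$ in $s^2$ at the origin, which again rests on the invertibility of the linearised Sinkhorn operator guaranteed by \ref{ass:BFID}.
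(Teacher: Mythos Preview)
Your identification of the Sinkhorn structure at $s=0$ is exactly right, and your compactness-plus-uniqueness route for part~\eqref{item1:LB} is a legitimate alternative to what the paper does. The paper instead proves a \emph{quantitative} estimate $\|\qvec(s)-\rvec_0\|_\infty\le C\,s^2$ for small $s$: it takes the stability result of \cite[Proposition~3.10(ii)]{AEKgram} (restated in the paper as Proposition~\ref{prop:stabs0}), views $\rvec(s,t)$ as a perturbation of $\rvec(0,t)$ with perturbation size $\|\vec{\bd}(s,t)\|_\infty\sim s^2/(V^\tran\br+t)$, and runs an inductive bootstrap along a sequence $t_k\downarrow0$ to keep $\|\rvec(s,t_k)-\rvec(0,t_k)\|_\infty$ inside the stability window. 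Your softer argument avoids this machinery and reaches the same conclusion once the two-sided bounds are in hand; the paper's approach additionally gives a rate, which feeds into the computation of $f_n(0)$ but is not strictly needed.

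The one place your proposal is thin is precisely the step you flag as the ``genuine obstacle'': the uniform two-sided bound $c\le q_i(s),\tilde q_i(s)\le C$ for \emph{positive} $s$. The results you cite give this at $s=0$ (that is Lemma~\ref{ME0bound}, i.e.\ \cite[Lemmas~3.11,~3.13]{AEKgram}), while \cite[Lemma~4.1]{alt2018local} works under \ref{ass:sigmin} rather than \ref{ass:BFID}. Transferring the $s=0$ bounds to small $s>0$ under \ref{ass:BFID} is exactly what the stability-plus-bootstrap argument accomplishes; without it your compactness argument has no input, since a priori some $q_i(s)$ could drift to $0$ or $\infty$ as $s\downarrow0$. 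So either you must redo the \cite{AEKgram} analysis with the $s^2$ term present, or import the stability proposition as the paper does.

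For part~\eqref{item2:LB} your plan coincides with the paper's: show $\partial_{s^2}\qvec(s)$ extends continuously to $s=0$ by checking that the linearisation $I-M(0)$ has one-dimensional kernel spanned by $(\q(0),-\qtilde(0))^\tran$ (your ``scaling direction''), which the paper verifies via the full indecomposability of $\diag(\q(0))V\diag(\qtilde(0))^2V^\tran\diag(\q(0))$. The formula for $f_n(0)$ then drops out of~\eqref{eq:density} after one differentiation, and the uniform bounds~\eqref{eq:unif-bounds-density-0} follow from Lemma~\ref{ME0bound}.
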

This proposition will be proven in Section~\ref{app:proof-prop:LB}.

In the following theorem, we adapt an argument from \cite{alt2018local} to
bound the density of $\mu_n$ from below. 
\begin{theo} \label{th:positive}  
Assume that \ref{ass:sigmax} holds true and that $A_n$ is irreducible. Then,  
\begin{enumerate}
\item \label{th:positive-i1} Assuming \ref{ass:admissible}, if 
$|z| \in (0, \sqrt{\rho(V)})$, then the density $f_n$ of $\mu_n$ is bounded 
from below by a positive constant that depends on $|z|$ and is independent of
$n$. 
\item \label{th:positive-i2} Assuming \ref{ass:BFID}, then for 
$|z| \in[0, \sqrt{\rho(V)})$, the density $f_n$ of $\mu_n$ (which existence at 
zero is stated by Proposition~\ref{prop:LB}) is bounded from below by a 
positive constant that depends on $|z|$ and is independent of $n$.     
\end{enumerate}
\end{theo}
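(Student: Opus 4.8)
The point of departure is the density formula \eqref{eq:density}: since $f_n(z)=\tfrac{1}{2\pi|z|}F_n'(|z|)$ for $|z|\notin\{0,\sqrt{\rho(V_n)}\}$, everything reduces to an $n$-uniform lower bound on the radial density $F_n'(s)$ at $s=|z|$. From \eqref{def:ME} one gets the closed-form identity $F_n(s)=\tfrac1n\sum_{i\in[n]}\tfrac{s^2}{s^2+(V_n\bqt(s))_i(V_n^\tran\bq(s))_i}$, which I would differentiate in $s$ — legitimate on $(0,\sqrt{\rho(V_n)})$ by the $C^1$-regularity in Theorem \ref{thm:master}\eqref{q:contdif} — eliminating $\qvec'(s)$ by implicitly differentiating \eqref{def:ME}. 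This expresses $F_n'(s)$ through the resolvent of the \emph{stability operator} $\mathcal B_s$ of the Master Equations: conjugating the $2n\times2n$ Jacobian of the right-hand side of \eqref{def:ME} by $\diag(I_n,-I_n)$, one obtains an entrywise nonnegative matrix $\mathcal B_s$, built from $V_n$, $\bq(s)$ and $\bqt(s)$, that is irreducible (it inherits irreducibility from $A_n$, for $|z|>0$) and satisfies $\mathcal B_s\qvec(s)=\qvec(s)$; hence, by Perron–Frobenius, $\rho(\mathcal B_s)=1$ is a simple eigenvalue with the strictly positive eigenvector $\qvec(s)$. The normalization $\sum_iq_i=\sum_i\tilde q_i$ in \eqref{def:ME} is exactly what fixes this Perron direction, and $F_n'(s)$ comes out as a nonnegative quantity built from $(I-\mathcal B_s)^{-1}$ restricted to the complement of $\qvec(s)$. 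Up to the change of variables $s=|z|$ this is the self-consistent density of states of Girko's Hermitization of $Y_n-z$ in the framework of Alt et al., and its positivity and finiteness for $|z|<\sqrt{\rho(V_n)}$ amount to a spectral gap of $\mathcal B_s$ at $1$; what the theorem requires is a \emph{quantitative} such gap, uniform in $n$.

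Second, I would collect the $n$-uniform a priori bounds that \ref{ass:sigmax} and \ref{ass:admissible} supply. By Theorem \ref{thm:master}\eqref{q=limr}, $\qvec(s)=\lim_{t\downarrow0}\rvec(s,t)$, so \ref{ass:admissible} gives $\tfrac1n\sum_iq_i(s)=\tfrac1n\sum_i\tilde q_i(s)\le C(s)$; combined with $\sigma_{ij}\le\sigma_{\max}$ this yields $0\le q_i(s),\tilde q_i(s)\le C(s)$ and $(V_n^\tran\bq(s))_i,(V_n\bqt(s))_i\le\sigma_{\max}^2 C(s)$ uniformly in $i$ and $n$, and hence $\beta_i(s):=q_i(s)(V_n\bqt(s))_i\le 1-\delta(s)$ for some $\delta(s)>0$ independent of $i,n$. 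These control the entries of $\mathcal B_s$ and the size of $\qvec(s)$ from above, and reduce the theorem to an $n$-uniform lower bound on the spectral gap of $\mathcal B_s$ at $1$ at the point $s=|z|$.

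This last step is the crux and the one I expect to be the main obstacle: under \ref{ass:admissible} alone — which, unlike \ref{ass:sigmin}, carries no quantitative irreducibility — one must prevent the gap of $\mathcal B_s$ from closing as $n\to\infty$, which heuristically is the statement that the radial mass of $\mu_n$ does not escape into a fixed neighbourhood of the origin. Here the plan is to adapt the argument of \cite[Lemma~4.1]{alt2018local} together with the analysis of the self-consistent spectral density in \cite{alt2019location}: identifying $(\bq(s),\bqt(s))$ with the solution of the Dyson equation for the Hermitization of $Y_n-z$, one transfers their lower bounds on the self-consistent density in the interior of its support — after checking that \ref{ass:sigmax}, \ref{ass:admissible} and the irreducibility of $A_n$ suffice to run their stability analysis, and using the regularity of $s\mapsto\qvec(s)$ from \cite{cook2018non}. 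This proves part \eqref{th:positive-i1}. Part \eqref{th:positive-i2} then follows quickly: by Proposition \ref{BFID=>RI}, \ref{ass:BFID} implies \ref{ass:expander}, hence \ref{ass:admissible}, so part \eqref{th:positive-i1} already gives the bound for $|z|\in(0,\sqrt{\rho(V_n)})$, while the endpoint $|z|=0$ is precisely Proposition \ref{prop:LB}\eqref{item2:LB}, which under \ref{ass:BFID} guarantees both that $f_n(0)$ exists and that $f_n(0)\ge\kappa>0$ with $\kappa$ independent of $n$; concatenating the two ranges gives the asserted $n$-independent, $|z|$-dependent positive lower bound on $[0,\sqrt{\rho(V_n)})$.
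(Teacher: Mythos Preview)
Your overall architecture matches the paper's: write the density through the resolvent of the linearized (stability) operator of the Master Equations, identify the Perron--Frobenius direction, and reduce to a quantitative spectral gap borrowed from the analysis in \cite{alt2018local}; then deduce part~\eqref{th:positive-i2} from part~\eqref{th:positive-i1} together with Proposition~\ref{prop:LB}. That last reduction is exactly what the paper does.

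There are, however, two genuine gaps in your sketch of part~\eqref{th:positive-i1}. First, your a priori estimates only give \emph{upper} bounds $q_i,\qt_i\le C(s)$; you never secure positive lower bounds $q_i,\qt_i\gtrsim 1$. These are indispensable: without them neither the entries of your $\mathcal B_s$ nor the final quadratic form can be controlled from below, and the density estimate degenerates. The paper obtains such lower bounds from \cite[Eq.~(5.17), (5.31)]{cook2018non} (see Lemma~\ref{lemma:boundsonq}). Second, the step you describe as ``checking that \ref{ass:sigmax}, \ref{ass:admissible} and irreducibility suffice to run [Alt et al.'s] stability analysis'' is precisely where the work lies, and it is not a routine verification. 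In \cite{alt2018local} the standing hypothesis \ref{ass:sigmin} guarantees that the symmetric factor $\bs F$ (in the factorization $I-\bs T\bs F$ of the stability operator) is irreducible with a one-dimensional Perron subspace; under \ref{ass:admissible} alone this can fail. The paper therefore has to prove new technical lemmas (Lemmas~\ref{correct(I+F)}, \ref{lem:Fproj}, \ref{lem:boundFTI}) to control the action of $(I-\bs T\bs F)^{-1}$ on the ``correction'' vector $\bs W\bs 1$ despite a possibly degenerate top eigenspace for $\bs F$, via an iterative expansion in $\tfrac12(I+\bs F\bs T)$ together with the approximate eigenvector structure~\eqref{eq:Feigenvector}. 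Only after this can one reach the clean reduced form~\eqref{eq:dens2} and apply Cauchy--Schwarz to obtain the explicit lower bound~\eqref{eq:LBqq}. Your phrase ``one must prevent the gap of $\mathcal B_s$ from closing'' correctly names the enemy, but the mechanism for defeating it is not contained in \cite{alt2018local,alt2019location} under your hypotheses and needs to be supplied.
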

The proof of Theorem \ref{th:positive} is postponed to Section \ref{proof:th-positive}.
Part \eqref{th:positive-i2} will follow easily by noting that the proof of Proposition \ref{prop:LB}-(\ref{item2:LB}) shows the lower bound in \eqref{eq:LBqq} is bounded away from zero. Finally, we remark the examples in Section \ref{sec:examples} show that one cannot expect $z$ independent lower bounds in general. We do note that our lower bounds only depend on the solution to \eqref{def:ME}.
\subsection{Revisiting the circular law}\label{sec:circular-law-revisited}
Example 2.1 in \cite{cook2018non} uses Theorem \ref{thm:main} to rederive the classical circular law. In \cite[Example 2.2]{cook2018non} and  \cite[Theorem 2.4]{cook2018non} the circular law is shown to also hold for any doubly stochastic variance profile that satisfies Assumption \ref{ass:sigmax}. In both these cases the master equations \eqref{def:MEt}, \eqref{def:ME} simplify to: 
\begin{equation}	\label{ME:scalar}
\rr_i \equiv r = \frac{r+t}{s^2 + (r+t)^2}\ ,\quad r>0\qquad \textrm{and} \qquad  q_i \equiv q = \frac{q}{s^2 + q^2}\ , \quad q \ge 0\ .
\end{equation}

\begin{rem}\label{rem:necessary-circular}
Beyond doubly stochastic variance profiles, it is not hard to see that the circular law also holds for any variance profile of the form $D S D^{-1}$, where $D$ is a diagonal, positive matrix and $S$ is a doubly stochastic matrix. 
Indeed, a random matrix with such a variance profile can be represented as $D C D^{-1}$, where $C$ is a random matrix with a doubly stochastic variance profile. As the matrices $D C D^{-1}$ and $C$ have the same eigenvalues, we see the circular law is the deterministic equivalent for both. 
\end{rem}
We illustrate this observation by recovering a result by Aagaard and Haagerup \cite[Section 4]{AH:DT}.

\begin{example}
Let $\epsilon >0 $ and consider the variance profile $\widetilde{C}$ with entries:
\[ \sigma_{ij}^2 = \begin{cases} 
      \epsilon & \text{ if } i \geq j \\
      \epsilon + 1 & \text{ if } i < j 
   \end{cases}.
\]
Let $A$ be the associated standard deviation profile and consider the random matrix model $n^{-1/2} A\odot X$. Then its deterministic equivalent is given by $\mu_n$, the uniform measure on the disk of radius square root of $\frac{\epsilon}{n} \sum_{i=0}^{n-1}  \left( \frac{1+ \epsilon}{\epsilon} \right)^{\frac{i}{n}} $. In the limit $n\to \infty$, the expression for the radius converges to $\left(1/\log(1+ 1/\epsilon)\right)^{1/2}$.
\end{example}

To prove this, we begin by conjugating the variance profile by $D$, the diagonal matrix with diagonal element
$
D_{ii} = \left( \frac{1+\epsilon}{\epsilon} \right)^{\frac{i-1}{n}}
$. 
Matrix $n^{-1} D \widetilde{C} D^{-1}$ is a circulant matrix with positive entries. Since the row and column sums of a circulant matrix are all equal it follows immediately from Theorem \ref{thm:main} and Section~\ref{sec:circular-law-revisited} that the deterministic equivalent for the ESD is uniform on a disk. The radius of this disk follows from computing the first eigenvalue of the circulant variance profile.

It turns out that the variance profiles mentioned in Remark \ref{rem:necessary-circular} are the only ones that yield the circular law, as the following corollary of Proposition \ref{prop:LB} shows. Its proof is given in Section \ref{proof:circlaw}.

\begin{coro} \label{prop:circlaw}
Let $V$ satisfy Assumptions \ref{ass:sigmax} and \ref{ass:BFID}. Then the density of $\mu_n$ at zero is greater than or equal to $1/(\pi \rho(V))$, with equality if and only if $V = D^{-1} S D$ for some diagonal matrix $D$ and doubly stochastic matrix $S$. In the latter case, $\mu_n=\mu_{\textrm{circ}}$, the circular law.   
\end{coro}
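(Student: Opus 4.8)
The plan is to feed the explicit description of the density at zero from Proposition~\ref{prop:LB} into a spectral‑radius inequality for doubly stochastic matrices. Fix $n$ and write $\q(0),\qtilde(0)$ for the limits $\lim_{s\downarrow0}\q(s),\lim_{s\downarrow0}\qtilde(s)$, which exist by Proposition~\ref{prop:LB}(\ref{item1:LB}); by \eqref{eq:sink} no coordinate of $\q(0)$ or $\qtilde(0)$ vanishes, so both are $\succ0$. By Proposition~\ref{prop:LB}(\ref{item2:LB}) the density of $\mu_n$ at zero is $\tfrac1n\sum_i q_i(0)\widetilde q_i(0)$ up to the universal factor $1/\pi$, so the whole statement reduces to proving
\[
\rho(V)\cdot\frac1n\sum_{i\in[n]}q_i(0)\widetilde q_i(0)\ \ge\ 1 ,
\]
with equality if and only if $V=D^{-1}SD$ for some positive diagonal $D$ and (scalar‑rescaled) doubly stochastic $S$, in which case $\mu_n=\mu_{\mathrm{circ}}$. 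The first step is to set $M:=\diag(\q(0))\,V\,\diag(\qtilde(0))$: the relations \eqref{eq:sink} say precisely that all its row and all its column sums equal $1$, so $M$ is doubly stochastic, and it has the same zero pattern as $V$ (because $\q(0),\qtilde(0)\succ0$), hence is irreducible since \ref{ass:BFID} implies \ref{ass:expander} (Proposition~\ref{BFID=>RI}), which entails irreducibility of $V$. Writing $b_i:=1/(q_i(0)\widetilde q_i(0))$ and $\bs b=(b_i)$, we have $V=\diag(1/\q(0))\,M\,\diag(1/\qtilde(0))$, so by $\rho(AB)=\rho(BA)$ one gets $\rho(V)=\rho(\diag(\bs b)\,M)$.

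The key lemma is: for an irreducible doubly stochastic $M$ and $\bs b\succ0$, $\rho(\diag(\bs b)M)\ge(\prod_i b_i)^{1/n}$. I would prove this by taking a Perron eigenvector $\bphi\succ0$ of the nonnegative irreducible matrix $\diag(\bs b)M$, so that $b_i(M\bphi)_i=\rho\,\varphi_i$ with $\rho=\rho(\diag(\bs b)M)$. Since $M$ has row sums $1$, $(M\bphi)_i$ is a convex combination of the $\varphi_j$, so by concavity of $\log$, $\log(M\bphi)_i\ge\sum_j M_{ij}\log\varphi_j$; summing over $i$ and using that $M$ has column sums $1$ gives $\sum_i\log(M\bphi)_i\ge\sum_j\log\varphi_j$. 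Substituting $\log(M\bphi)_i=\log\rho-\log b_i+\log\varphi_i$ and cancelling $\sum_i\log\varphi_i$ yields $n\log\rho\ge\sum_i\log b_i$, which is the lemma. Combined with the AM–GM inequality $(\prod_i b_i)^{1/n}\ge n/\sum_i(1/b_i)=n/\sum_i q_i(0)\widetilde q_i(0)$, this gives exactly $\rho(V)\ge n/\sum_i q_i(0)\widetilde q_i(0)$, i.e.\ the required bound.

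For the equality case: if $\rho(V)=n/\sum_i q_i(0)\widetilde q_i(0)$ then equality must hold in the AM–GM step, so all $b_i$ coincide, i.e.\ $q_i(0)\widetilde q_i(0)\equiv c$ for some $c>0$; then $\diag(1/\qtilde(0))=c^{-1}\diag(\q(0))$, whence $V=c^{-1}\diag(\q(0))^{-1}M\diag(\q(0))=D^{-1}SD$ with $D=\diag(\q(0))$ and $S=c^{-1}M$, a nonnegative matrix all of whose row and column sums equal $c^{-1}=\rho(V)$. Conversely, given $V=D^{-1}SD$ with $D=\diag(d_i)\succ0$ and $S\succcurlyeq0$ having common row/column sum $\mathcal{V}=\rho(S)=\rho(V)$, one checks directly that for $0<s<\sqrt{\mathcal{V}}$ the choice $q_i(s)=\lambda d_i g(s)$, $\widetilde q_i(s)=(\lambda d_i)^{-1}g(s)$ with $g(s)=\mathcal{V}^{-1/2}\sqrt{1-s^2/\mathcal{V}}$ and $\lambda^2=(\sum_i d_i^{-1})/(\sum_i d_i)$ solves the Master Equations \eqref{def:ME} and satisfies $\sum_i q_i=\sum_i\widetilde q_i$; by the uniqueness in Theorem~\ref{thm:master} this is $\q(s),\qtilde(s)$. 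Hence $q_i(0)\widetilde q_i(0)=\mathcal{V}^{-1}$ for all $i$ (so equality holds), and $F_n(s)=1-\tfrac1n\langle\q(s),V\qtilde(s)\rangle=1-g(s)^2\mathcal{V}=s^2/\mathcal{V}$, the CDF of the uniform law on the disc of radius $\sqrt{\rho(V)}$; thus $\mu_n=\mu_{\mathrm{circ}}$.

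I expect the main obstacle to be the key lemma: since $\diag(\bs b)M$ need not be symmetric, a Rayleigh‑quotient argument is unavailable, and one has to exploit double stochasticity — the fact that $M$ has row sums \emph{and} column sums equal to $1$ — which is exactly what makes the Jensen step close. A secondary, purely mechanical point is verifying that the explicit ansatz in the converse direction really solves \eqref{def:ME} together with the normalization $\sum q_i=\sum\widetilde q_i$, so that Theorem~\ref{thm:master} identifies it as the solution.
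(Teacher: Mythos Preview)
Your argument is correct and follows the same overall architecture as the paper: form the doubly stochastic matrix $M=\diag(\q(0))V\diag(\qtilde(0))$, use a spectral--radius lower bound of Friedland--Karlin type to compare $\rho(V)$ with the geometric mean of the $1/(q_i\widetilde q_i)$, then pass to the arithmetic side via AM--GM (equivalently your GM--HM step), and read off the equality case. The paper invokes the Friedland--Karlin inequality \cite{FK:spectral-rad} as a black box; your ``key lemma'' is precisely the special case where the Perron vectors of $M$ are constant, and your Jensen argument exploiting that $M$ has both row and column sums equal to one is a clean, self-contained proof of that case. For the converse (that $V=D^{-1}SD$ gives the circular law), the paper appeals to Remark~\ref{rem:necessary-circular}, which argues via the random matrix model; your route---writing down the explicit solution $q_i=\lambda d_i g(s)$, $\widetilde q_i=(\lambda d_i)^{-1}g(s)$ of \eqref{def:ME} and computing $F_n$---is more direct and stays entirely within the deterministic framework. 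Both approaches are equivalent in strength; yours has the advantage of being self-contained, while the paper's citation to Friedland--Karlin records the general principle behind the inequality.
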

\section{Examples and simulations}
\label{sec:examples}
In this section we study variance profiles with many vanishing entries. We provide simulations for band matrix models in Section \ref{sec:numerics} and exhibit a model with vanishing density and an atom at zero in Section \ref{sec:singular.profile}.  
\subsection{A remark about the numerical computation of the Master Equations}
\label{numerics} 
We first introduce the following compact notation for the Regularized Master Equations \eqref{def:MEt}.
For two $n\times 1$ vectors $\bs{a}$ and $\bs{\widetilde a}$ with nonnegative components, let $\bs{\vec{a}}^\tran=\big( \bs{a}^\tran \ \bs{\widetilde a}^\tran \big)$. Define 
\begin{equation}
\label{def:Psi} 
\Psi(\bs{\vec{a}},s,t) = 
\diag \left( 
 \frac 1{s^2+[(V_n\bs{\widetilde a})_i+t][ (V_n^\tran\bs{a})_i +t]} 
   ;\, i\in [n]\right),  
\end{equation}
and let 
$$
\mathcal I(\bs{\vec{a}},s, t) =
\begin{pmatrix} 
\Psi(\bs{\vec{a}},s,t) V_n^\tran & 0\\
  0 & \Psi(\bs{\vec{a}},s,t) V_n
\end{pmatrix} \bs{\vec{a}} 
  + t \begin{pmatrix} \Psi(\bs{\vec{a}},s,t) \bs{1}_n \\
 \Psi(\bs{\vec{a}},s,t) \bs{1}_n
\end{pmatrix}  
.
$$
Then \eqref{def:MEt} can alternatively be expressed 
$\rvec = {\mathcal I}(\rvec,s,t)$.
The proof of Theorem \ref{thm:master} involves the study of the solution
$\rvec={\mathcal I}(\rvec,s,t)$ to the Regularized Master Equations, where
$t>0$ is a regularization parameter. These equations also provide a numerical
means of obtaining an approximate value of the solution of \eqref{def:ME} via
the iterative procedure 
$$
\rvec_{k+1} = \mathcal I(\rvec_k,s,t)\ ,
$$ 
obtained for a small value of $t$. However, the convergence of this procedure
becomes slower as $t\downarrow 0$. To circumvent this issue, one can solve the
system for relatively large $t$ and then increment $t$ down to zero, using the
previous solution as the new initial vector $\rvec_0$. Additionally, as pointed
out in \cite[Section 4]{speicher-et-al-2007}, considering the average
$\rvec_{k+1} = 2^{-1}\rvec_{k} + 2^{-1} \mathcal I(\rvec_k,s,t)$ leads to 
faster numerical convergence.
\subsection{Band matrix models}
\label{sec:numerics} 
We now provide some numerical illustrations of the results of 
Theorem~\ref{thm:main} in the case of band matrix models. In these cases, closed-form expressions for the density seem out of reach but plots can be  
obtained by numerics.
We consider two probabilistic matrix models with
complex entries (with independent Bernoulli real and imaginary parts) and sampled variance profiles associated to the following functions:
$$
\begin{array}{|c|c|}
\hline
\phantom{\Big(}\textrm{Model A}\phantom{\Big)} & \textrm{Model B}\\
\hline
\phantom{\bigg(} \sigma^2(x,y) = \un_{\left\{|x-y|\leq \frac 1{20}\right\}} \phantom{\bigg(}& \sigma^2(x,y) = (x + 2y)^2 \un_{\left\{|x-y|\leq \frac 1{10}\right\}}\\
\hline
\end{array}
$$
Clearly, the function associated to Model A yields a symmetric variance profile, admissible by Theorem 2.4. Model B satisfies  
the broad connectivity hypothesis (see \cite[Remark 2.8]{cook2018non}), hence \ref{ass:expander} (which is weaker than the broad connectivity assumption).
\begin{lemma}
\label{lem:band-profile} 
Given $\alpha \in (0,1)$ and $a>0$, consider the standard deviation profile matrix 
$A_n = (\sigma(i/n, j/n))_{i,j=1}^n$ where 
$\sigma^2(x,y) = (x+ay)^2\un_{|x-y|\leq \alpha}$. Then, there exists a cutoff 
$\bs\sigma_0 \in (0,1)$ such that for all $n$ large enough, the matrix 
$A_n(\bs\sigma_0)$ satisfies the broad connectivity hypothesis 
with $\delta  = \kappa 
= c\alpha $ for a suitable absolute constant $c>0$. 
\end{lemma}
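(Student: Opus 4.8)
The plan is to unwind the definitions of ``broad connectivity'' and ``$(\delta,\kappa)$-robust irreducibility'' (as recalled in \cite{cook2018non}) and verify them directly for the thresholded band profile $A_n(\bs\sigma_0)$. First I would fix the cutoff: since $\sigma^2(x,y)=(x+ay)^2$ on the band $\{|x-y|\le\alpha\}$ and the entries of $A_n$ are the values $\sigma(i/n,j/n)$, the entry $\sigma(i/n,j/n)$ is bounded below by a positive absolute constant (roughly $\min(1/n,\,a/n)$ scaled up, i.e.\ of order $\min(1,a)\cdot(i+aj)/n$, which is $\gtrsim 1/n$ once $i\ge 1$) as soon as $(i,j)$ lies in the band — except near the corner $i,j$ small, where $x+ay$ is itself small. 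So I would pick $\bs\sigma_0\in(0,1)$ small enough that $\{(i,j): \sigma(i/n,j/n)\ge\bs\sigma_0\}$ contains the band intersected with, say, $\{i\ge \alpha n/2\}$ or more simply the ``bulk'' band $\{|x-y|\le\alpha,\ x+ay\ge c_0\}$ for a small absolute $c_0$; the precise choice only needs $\bs\sigma_0<c_0\min(1,a)$. The support of $A_n(\bs\sigma_0)$ is then (a slight shrinking of) the band, and the key structural fact is that a thresholded band matrix of bandwidth $\sim\alpha n$ is robustly irreducible with parameters linear in $\alpha$.

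The core of the argument is the graph-theoretic estimate. Robust irreducibility with $\delta=\kappa=c\alpha$ asks, roughly, that for every subset $S\subset[n]$ with $|S|\le (1-c\alpha)n$ (or the relevant cut condition in the definition in \cite{cook2018non}), the number of pairs $(i,j)$ with $i\in S$, $j\notin S$ and $A_n(\bs\sigma_0)_{ij}\neq 0$ is at least $c\alpha\,|S|$ (or at least $c\alpha n$, whichever normalization is used). For a band matrix this is immediate: the support graph contains all edges $\{i,j\}$ with $|i-j|\le \alpha n/2$, so it contains a path $1-2-3-\cdots-n$ together with all ``short-range'' chords; for any cut $(S,S^c)$ the boundary along the path already forces at least one crossing, and the bandwidth-$\alpha n/2$ neighborhood of that crossing point forces $\gtrsim\alpha n$ crossing edges. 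More carefully, take any $i_0$ on the boundary of $S$ (i.e.\ $i_0\in S$, $i_0+1\notin S$, or symmetrically); then among the $\lfloor\alpha n/2\rfloor$ indices $i_0-\lfloor\alpha n/2\rfloor+1,\dots,i_0$ and the $\lfloor\alpha n/2\rfloor$ indices $i_0+1,\dots,i_0+\lfloor\alpha n/2\rfloor$, every pair $(i,j)$ with $i\le i_0<j$ and both in the window is a band edge, and a counting argument (Cauchy–Schwarz on how many of each window lie in $S$ vs.\ $S^c$, or simply: if $k$ of the left window are in $S^c$ then at least $\alpha n/2-k$ left-window indices in $S$ each connect to $\ge k'$ right-window indices in $S^c$\ldots) produces $\gtrsim\alpha n$ edges across the cut, as long as $|S|$ is not within $c\alpha n$ of $0$ or $n$. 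This last exclusion is exactly what the $\delta=c\alpha$ in the definition accommodates. I would make the constants explicit but not optimize them; any $c$ that works for the cleanest counting is fine.

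The main obstacle — and the place I would spend the most care — is matching the precise combinatorial definition of $(\delta,\kappa)$-robust irreducibility / broad connectivity from \cite{cook2018non} (which I am treating as a black box here): these definitions typically involve a threshold on entry sizes, a normalization by $n$ inside the matrix $V_n=\frac1n A_n\odot A_n$, and a two-sided cut condition, and the bookkeeping of ``how large must an entry be to count'' has to be threaded through consistently with the choice of $\bs\sigma_0$. In particular one must check that the \emph{values} of the surviving entries, not merely their support, satisfy whatever quantitative lower bound the definition demands — here that is handled by the bulk restriction $x+ay\ge c_0$ guaranteeing $\sigma_{ij}\gtrsim c_0$, so $(V_n)_{ij}\gtrsim c_0^2/n$ on the surviving band. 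A secondary, purely cosmetic, point is the corner $i,j$ small: removing an $O(1)$-width strip of rows/columns near the corner changes $\delta,\kappa$ only by $O(1/n)$ and is absorbed into ``$n$ large enough.'' Once the definition is pinned down, the proof is the short band-matrix cut count sketched above; I would present it as a lemma-internal computation rather than invoking anything deeper.
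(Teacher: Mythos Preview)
Your approach is viable but differs from the paper's, and it contains one genuine slip worth flagging.

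\textbf{Comparison with the paper.} The paper does not verify broad connectivity from scratch. It observes that by taking $\bs\sigma_0$ small one confines the sub-threshold band entries to the top-left corner of dimension, say, $n/100$, and then simply invokes \cite[Corollary 1.17]{cook2018lower} (which already handles band profiles) with minor modification to accommodate that corner. Your route is the self-contained one: unwind the definition and run a direct expansion/cut argument for band graphs. Both are legitimate; the paper's is a two-line citation, yours would be a page of combinatorics but avoids the external dependence.

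\textbf{The slip.} You write that the removed corner is an ``$O(1)$-width strip'' whose effect on $\delta,\kappa$ is $O(1/n)$ and is absorbed into ``$n$ large enough.'' That is not right: for any fixed cutoff $\bs\sigma_0>0$, the set $\{(i,j)\in\text{band}:\ (i/n)+a(j/n)<\bs\sigma_0\}$ occupies a corner of linear dimension $\sim \bs\sigma_0 n/(1+a)$, not $O(1)$. (The paper acknowledges this: its corner has dimension $n/100$.) This does not sink your argument, but it changes how you must handle it: you need to choose $\bs\sigma_0$ (equivalently $c_0$) small \emph{relative to} $\alpha$, e.g.\ $c_0\le \alpha/10$, so that the damaged region is a small fraction of the bandwidth and your window-counting near a boundary point $i_0$ still finds $\gtrsim \alpha n$ usable neighbors. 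The constant $c$ in $\delta=\kappa=c\alpha$ then absorbs this, but it is an $\alpha$-dependent choice of cutoff, not an $n$-dependent one.

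A secondary caution: the condition you sketch (``$\gtrsim c\alpha |S|$ edges across any cut'') is an edge-expansion statement, whereas broad connectivity in \cite{cook2018non} is a vertex-expansion (neighborhood-size) statement. For band graphs the translation is easy --- the $\alpha n$-neighborhood of any nonempty $S$ has size at least $\min(|S|+\alpha n, n)$ --- but make sure the argument you actually write down matches the definition you are verifying, since your boundary-point counting as stated produces edges, not neighbors.
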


\begin{proof}
One can take the cutoff parameter $\scut$ sufficiently small that the entries $\sigma_{ij}<\scut$ within the band are confined to the top-left corner of $A$ of dimension $n/100$, say, at which point the argument of \cite[Corollary 1.17]{cook2018lower} applies with minor modification.
\end{proof}

Eigenvalue realizations for models A and B are shown on Figure~\ref{plot-eig}.
On Figure~\ref{plot-dens}, the densities of $\mu_n$ are shown. Plots of the
functions $F_n$ given by~\eqref{expF} are shown on Figure~\ref{plot-F} along with
their empirical counterparts.  

\begin{figure}[ht] 
\centering
\begin{subfigure}{.5\textwidth}
  \centering
  \includegraphics[width=\linewidth]{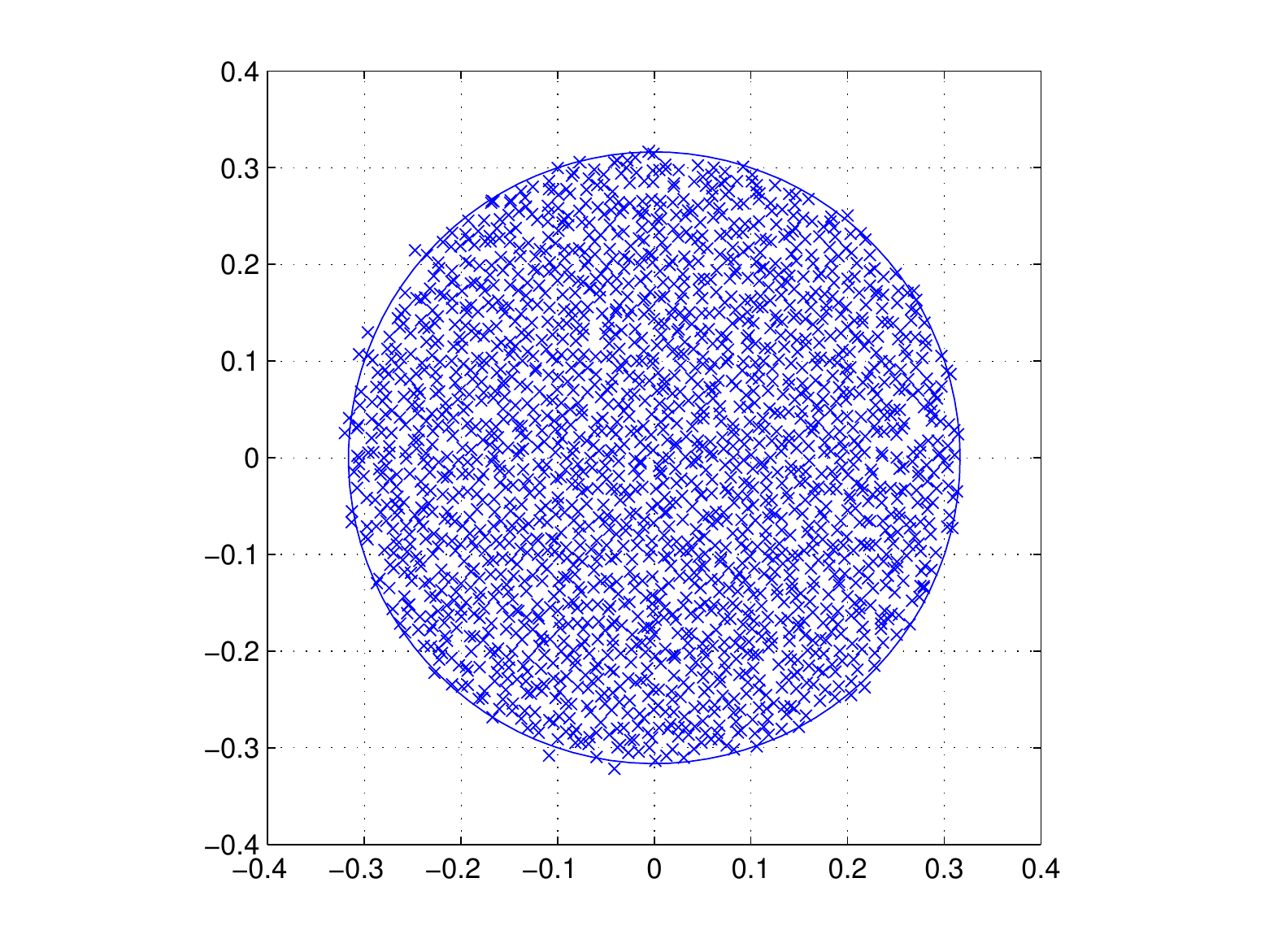}
  \caption{Model A}
  \label{eig-A}
\end{subfigure}%
\begin{subfigure}{.5\textwidth}
  \centering
  \includegraphics[width=\linewidth]{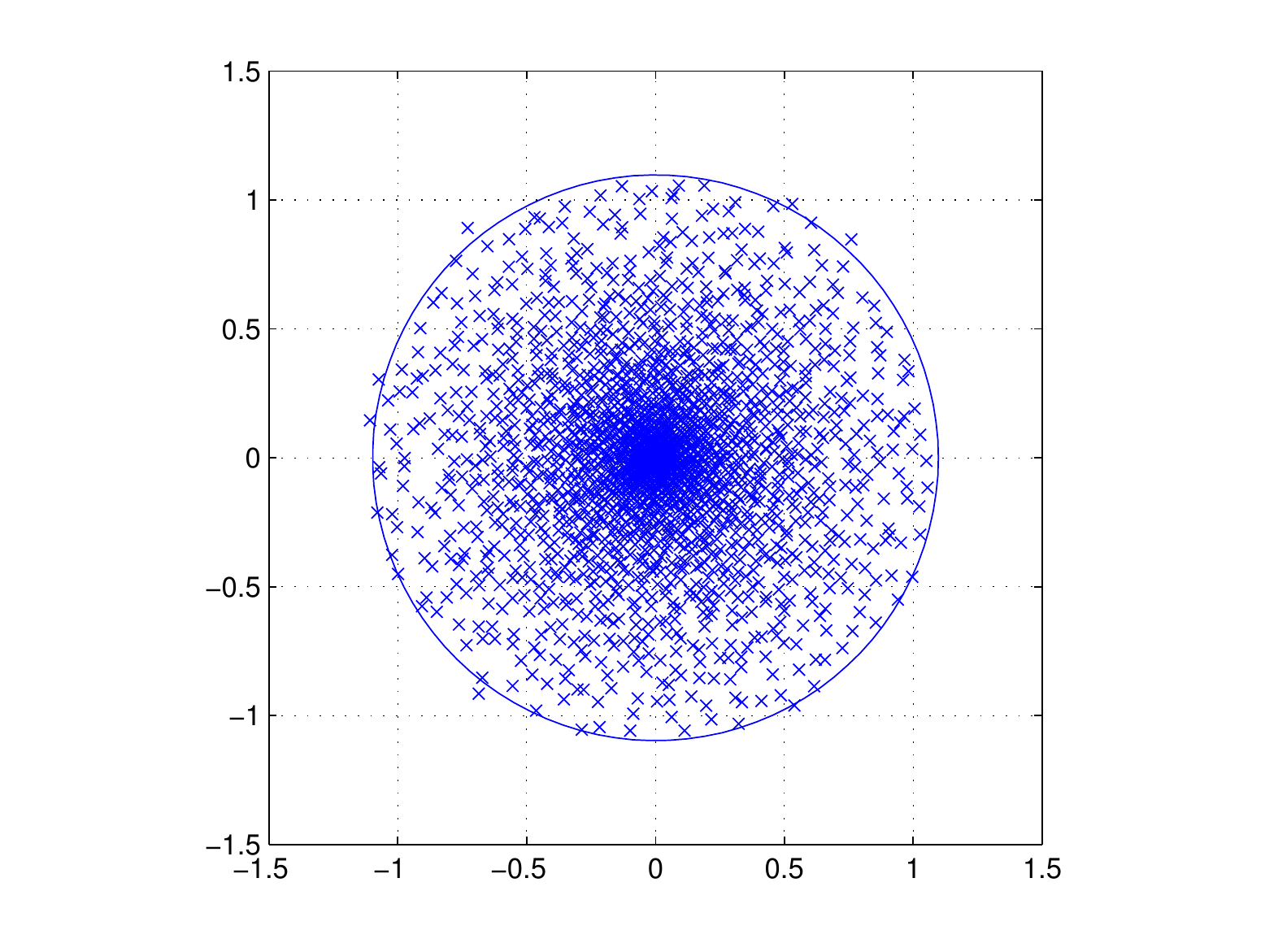}
  \caption{Model B}
  \label{eig-B}
\end{subfigure}
\caption{Eigenvalues realizations. Setting: $n=2000$; the circles' radii are $\sqrt{\rhoV}$.}
\label{plot-eig}
\end{figure}

\begin{figure}[ht] 
\centering
\begin{subfigure}{.5\textwidth}
  \centering
  \includegraphics[width=0.95\linewidth]{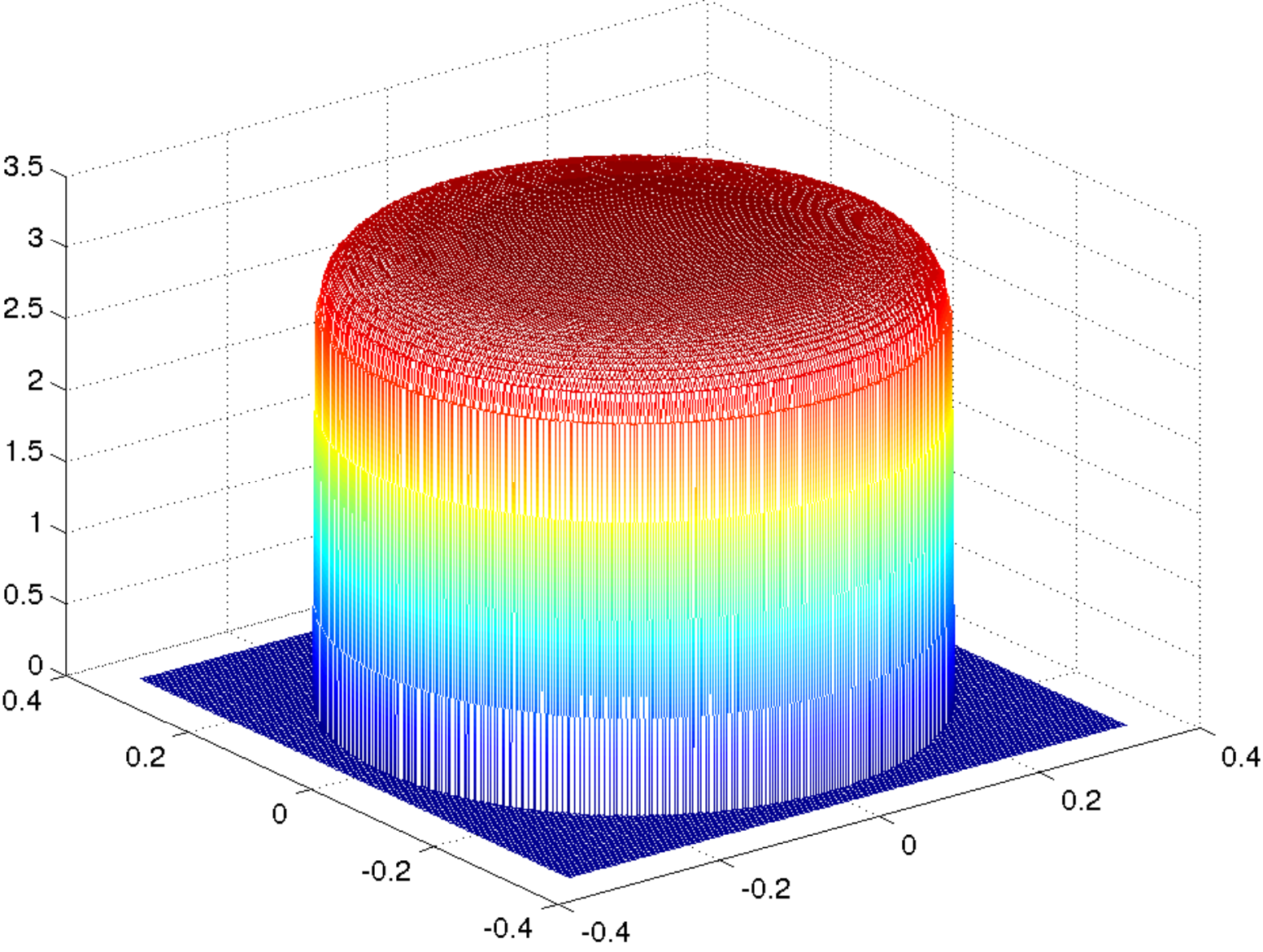}
  \caption{Model A}
  \label{dens-A}
\end{subfigure}%
\begin{subfigure}{.5\textwidth}
  \centering
  \includegraphics[width=0.95\linewidth]{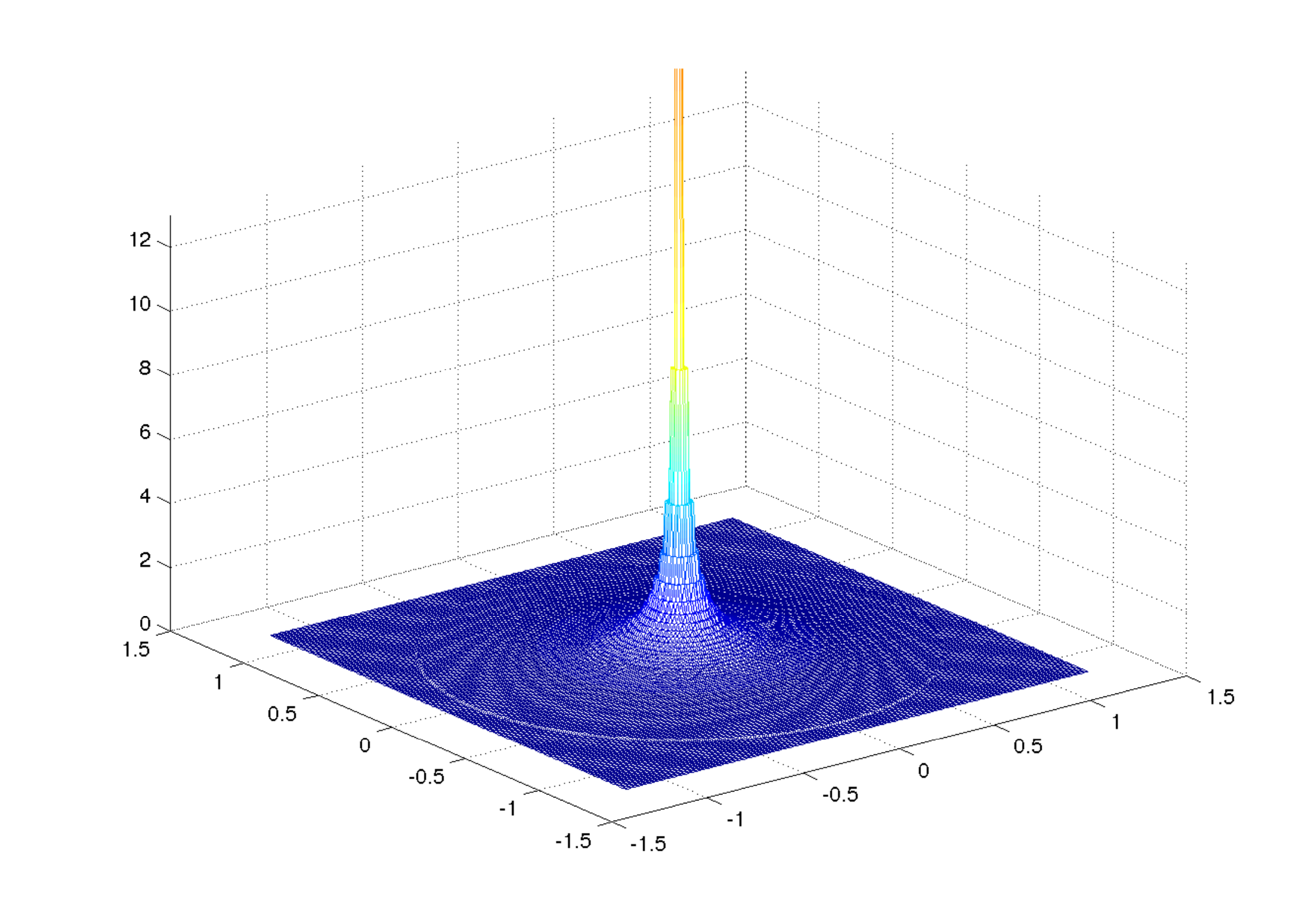}
  \caption{Model B}
  \label{dens-B}
\end{subfigure}
\caption{Densities of $\mu_n$.}
\label{plot-dens}
\end{figure}

\begin{figure}[ht] 
\centering
\begin{subfigure}{.5\textwidth}
  \centering
  \includegraphics[width=0.8\linewidth]{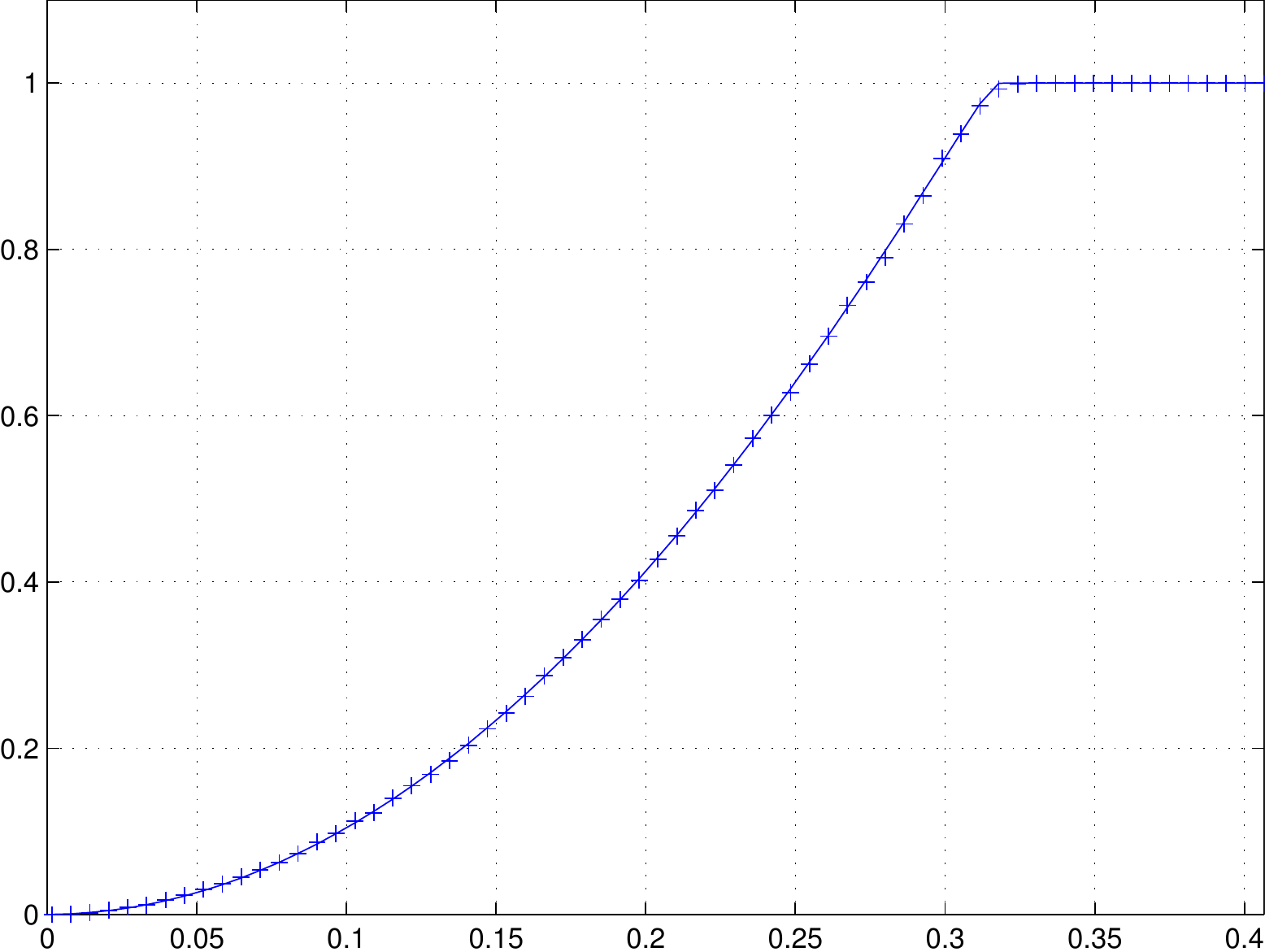}
  \caption{Model A\\ \phantom{x}}
  \label{F-A}
\end{subfigure}%
\begin{subfigure}{.5\textwidth}
  \centering
  \includegraphics[width=0.8\linewidth]{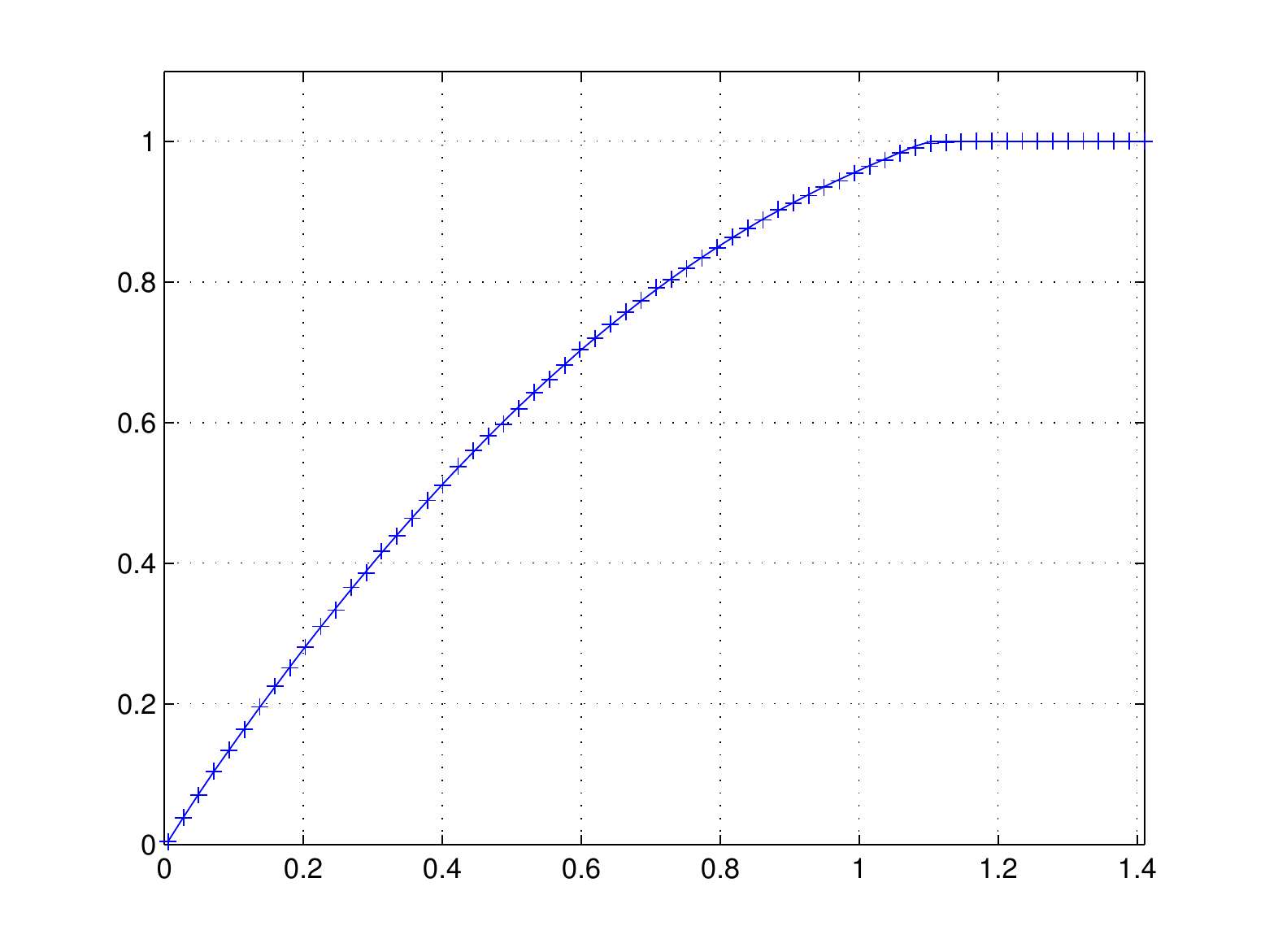}
  \caption{Model B\\ \phantom{x}}
  \label{F-B}
\end{subfigure}
\caption{Plots of $F_n(s)$ (plain lines) and their empirical realizations 
(``+''). The setting is the same as in Figure~\ref{plot-eig}.} 
\label{plot-F}
\end{figure}

Up to the ``corner effects'', the variance profile for Model~A is a scaled 
version of the doubly stochastic variance profile considered in 
Section~\ref{sec:circular-law-revisited}. It is therefore expected that the density for
Model~A is ``close'' to the density of the circular law. This is confirmed
by Figures \ref{eig-A}, \ref{dens-A} and \ref{F-A}. Note in particular that $F_n$ depicted on 
Figure~\ref{F-A} is close to a parabola, which is the radial marginal of the circular law. 

Due to the form of the variance profile of Model~B, a good proportion of
the rows and columns of the matrix $Y_n$ have small Euclidean norms.  We can
therefore expect that many of the eigenvalues of $Y_n$ will concentrate towards
 zero. This phenomenon is particularly visible on the plot of the
density in Figure~\ref{dens-B}. 
\subsection{A limiting distribution with an atom at $z=0$}  \label{sec:singular.profile}
The following Proposition gives an example of a variance profile with a deterministic equivalent that has an atom at zero.

\begin{prop}[Example with an atom and vanishing density at zero] \label{prop:singular.profile} Denote by $J_m$ the $m\times m$ matrix whose elements are
all equal to one. Let $k\ge 1$ be a fixed integer, assume that $n = km$ ($m\ge 1$) and consider the $n\times n$ matrix
\begin{equation}	\label{Asingular}
A_n = \begin{pmatrix} 0 & J_m&\cdots  & J_m \\ J_m & 0 & \cdots & 0 \\ \vdots \\ J_m & 0 &\cdots & 0
\end{pmatrix} . 
\end{equation} 
Associated to matrix $A_n$ is the sequence of normalized variance profiles $V_n=\frac 1n A_n\odot A_n$ with spectral radius $\rho(V_n)= \frac{\sqrt{k-1}}k$. Denote by $\rho^*=\sqrt{\rho(V_n)} = \frac{\sqrt[4]{k-1}}{\sqrt{k}}$. Then 
\begin{enumerate}
\item Assumptions \ref{ass:sigmax} and \ref{ass:admissible} hold true.
\item The function $F_n$ defined in Theorem \ref{thm:main} does not depend on $n$ and is given by 
$$
F_n(s)= F_\infty(s) = \frac 1k \sqrt{(k-2)^2 +4k^2 s^4} \quad \textrm{if}\quad 0\le s\le \rho^*\ ,\\
$$
and $F_{\infty}(s)=1$ if $s>\rho^*$. In particular, $F_{\infty}({0})=1- \frac 2k$ and $\lim_{s\uparrow \rho^*} F_{\infty}(s)=1$. 
\item The density $f_n(=f_{\infty})$ and the measure $\mu_n(=\mu_{\infty})$ do not depend on $n$ and are given by 
\begin{eqnarray*}
f_{\infty}(z) & =& \frac {4k}\pi \frac{|z|^2}{\sqrt{(k-2)^2+ 4k^2 |z|^4}}{\bf 1}_{\{ |z|\le \rho^*\}}\ ,\\
\mu_{\infty}(\, dz)&=& \left( 1 - \frac 2k\right) \delta_0(\, dz) +\frac{4k}\pi \frac{|z|^2}{\sqrt{(k-2)^2+ 4k^2 |z|^4}}{\bf 1}_{\{ |z|\le \rho^*\}} \ell(dz)\ .
\end{eqnarray*}
In particular, $f_{\infty}(0)=0$.
\end{enumerate}
\end{prop}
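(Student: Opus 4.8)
The plan is to exploit the block structure of $A_n$ to reduce the Master Equations \eqref{def:ME} to a scalar system that is independent of $n$, and then read off $F_n$, $f_n$, and $\mu_n$ directly. First I would verify the admissibility claims in part (1): Assumption \ref{ass:sigmax} is immediate since all entries of $A_n$ are $0$ or $1$, so $\smax=1$; for \ref{ass:admissible} one observes that the variance profile here is symmetric (note $A_n=A_n^\tran$ by inspection of \eqref{Asingular}), hence $V_n=V_n^\tran$, so Assumption \ref{ass:symmetric} holds and the theorem at the end of Section~\ref{sec:sufficient} gives admissibility. I would also note $A_n$ is irreducible (its "bipartite-like" block pattern connects block $1$ to every other block and back), so Theorem~\ref{thm:master} and Theorem~\ref{thm:main} apply. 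The spectral radius computation $\rho(V_n)=\sqrt{k-1}/k$ is a short eigenvalue calculation: $V_n$ has the block form $\frac1n$ times the Kronecker-type pattern, and $\rho(V_n)$ is determined by the $k\times k$ "reduced" matrix with a single nonzero row and column of weight governed by $J_m$; I would record that $V_n\1_n$ and related products are constant on each block.

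The core step is the symmetry reduction. Because $V_n$ is built from blocks $I_j$ of equal size $n/k$ and the block pattern is invariant under permutations of the "leaf" blocks $2,\dots,k$, the unique positive solution $\qvec(s)$ of \eqref{def:ME} must be constant on each block, and moreover (by the symmetry $V_n=V_n^\tran$ together with the constraint $\sum q_i=\sum\tilde q_i$) one expects $q_i=\tilde q_i$ throughout. So the solution is described by just two scalars: the common value $a$ of $q_i$ on the "hub" block $I_1$ and the common value $b$ of $q_i$ on $I_2\cup\cdots\cup I_k$. I would substitute into \eqref{def:ME}: for $i\in I_1$, $(V_n\bqt)_i$ and $(V_n^\tran\bq)_i$ are each $\frac{m}{n}\cdot (k-1) b = \frac{k-1}{k}b$, while for $i$ in a leaf block they are each $\frac{m}{n}a=\frac1k a$. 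This yields the two scalar equations
\[
a=\frac{\frac{k-1}{k}b}{s^2+\big(\tfrac{k-1}{k}b\big)^2},\qquad
b=\frac{\frac1k a}{s^2+\big(\tfrac1k a\big)^2},
\]
together with the trace constraint $\frac nk a = \frac{(k-1)n}{k}b$, i.e.\ $a=(k-1)b$. I would solve this finite system explicitly as a function of $s$ (it reduces to a quadratic after clearing denominators), keeping the branch that is positive on $(0,\rho^*)$ and checking continuity and the vanishing at $s=\rho^*$.

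With $a(s),b(s)$ in hand, $F_n(s)=1-\frac1n\langle\bq,V_n\bqt\rangle$ becomes $1-\frac1n\big(\frac nk a\cdot\frac{k-1}{k}b + \frac{(k-1)n}{k}b\cdot\frac1k a\big) = 1-\frac{2(k-1)}{k^2}ab$; substituting the explicit $a,b$ should collapse to $\frac1k\sqrt{(k-2)^2+4k^2s^4}$ on $[0,\rho^*]$ after simplification, and one checks $F_\infty(0)=\frac1k\sqrt{(k-2)^2}=\frac{|k-2|}{k}$ — here I would need $k\ge 2$ for this to equal $1-\frac2k$ (the case $k=1$ degenerates and should be excluded or handled separately, since then there is no leaf block), and $F_\infty(\rho^{*-})=\frac1k\sqrt{(k-2)^2+4(k-1)}=\frac1k\sqrt{k^2}=1$. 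Then part (3) follows from \eqref{eq:density}: $f_\infty(z)=\frac{1}{2\pi|z|}F_\infty'(|z|)$, a direct differentiation giving $\frac{4k}{\pi}\frac{|z|^2}{\sqrt{(k-2)^2+4k^2|z|^4}}$, with the atom mass $1-F_\infty(0^+)=\frac2k$ at $z=0$ since $F_\infty$ has a jump there; and $f_\infty(0)=0$ is read off from the formula.

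I expect the main obstacle to be rigorously justifying the block-constancy and the identity $q_i=\tilde q_i$ of the solution rather than the algebra: one must argue that the \emph{unique} positive solution inherits the symmetries of $V_n$. The cleanest route is uniqueness itself — if $\qvec(s)=(\bq,\bqt)$ solves \eqref{def:ME}, then applying a symmetry (a permutation $\pi$ of the leaf blocks acting on coordinates, or the swap $\bq\leftrightarrow\bqt$ combined with $V_n\leftrightarrow V_n^\tran=V_n$) produces another positive solution, which by Theorem~\ref{thm:master}(2) must coincide with the original; averaging over the symmetry group then forces the claimed form. The swap symmetry needs the constraint $\sum q_i=\sum\tilde q_i$, which is exactly why it is imposed. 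Once this is nailed down, everything else is bookkeeping, and I would also double-check the degenerate small-$k$ cases ($k=1$, $k=2$) where the atom mass or the density formula simplifies.
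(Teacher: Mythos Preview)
Your overall strategy --- reduce by symmetry to two scalars, solve, and read off $F_n$ --- is exactly the paper's approach. But there is a real error in your reduction: the trace constraint $\sum_i q_i = \sum_i \tilde q_i$ does \emph{not} give $a=(k-1)b$. Once you have argued (correctly, via uniqueness) that $\bq=\bqt$, the constraint $\sum q_i=\sum\tilde q_i$ is tautologically satisfied and carries no further information. Your claimed identity $\frac{n}{k}a=\frac{(k-1)n}{k}b$ would instead say that the hub block and the union of leaf blocks contribute equally to $\sum_i q_i$, which is not what the Master Equations impose. In fact $a=(k-1)b$ is inconsistent with your two scalar equations for $k\ge 3$: substituting $a=(k-1)b$ into the second equation and then into the first forces $a=b$, hence $a=b=0$. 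So if you proceed with this extra constraint the algebra collapses. The correct route, as in the paper, is to work only with the two scalar equations; eliminating $b$ via $b=\frac{a/k}{s^2+(a/k)^2}$ leaves a quadratic in $a^2$ whose positive root feeds into $F_n(s)=1-\frac{2(k-1)}{k^2}ab$, and the simplification to $\frac1k\sqrt{(k-2)^2+4k^2s^4}$ then goes through.

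There is also a slip at the end: the atom mass at $0$ is $\mu_n(\{0\})=F_\infty(0)=1-\frac{2}{k}$, not $1-F_\infty(0^+)=\frac{2}{k}$. There is no ``jump'' of $F_\infty$ at $s=0$; rather, $F_\infty(0)>0$ directly records the atom.
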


The definition of $F_n$ readily implies that measure $\mu_n$ admits an atom at zero of weight $1-\frac 2k$ since $\mu_n(\{0\})=F_n(0)=1 - \frac 2k$. This result can (almost) be obtained by simple linear algebra: Note that 
$\rank( Y_n) = \rank( n^{-1/2} A_n \odot X_n) \leq (m-2)k$ for any $X_n$. 
Indeed, since the top-right $m\times (k-1)m$ submatrix of $Y_n$ has row-rank at most $m$, its kernel, and hence the kernel of $Y_n$, has dimension at least $m(k-2)$.
Therefore, $\mu^Y_n$ has an atom at zero with
the weight $\frac{m(k-2)}{mk}= 1-\frac 2k$ (at least) when $n$ is a multiple of $k$.

\begin{rem}[Typical spacing for the random eigenvalues near zero]
We heuristically evaluate the typical spacing for the random eigenvalues in a small disk centered at zero.
$$
\mu_n^Y(B(0,\varepsilon)) \simeq \left( 1 - \frac 2k\right) +\int_{B(0,\varepsilon)} f_{\infty}(z) \ell(dz)
$$
If we remove the $n\left( 1 - \frac 2k\right)=km\left( 1 - \frac 2k\right)=(k-2)m$ deterministic zero eigenvalues, the typical number of random eigenvalues in $B(0,\varepsilon)$ is 
\begin{eqnarray*}
\#\{\lambda_i\ \textrm{random}\ \in B(0,\varepsilon)\} \ =\ n \times \int_{B(0,\varepsilon)} f_{\infty}(z) \ell(dz)
\ =\ 2\pi n \int_0^\varepsilon s h(s)\, ds
\ \propto\  n\varepsilon^4, 
\end{eqnarray*}
with $h(|z|) = f_\infty(z)$. 
Hence, if we want the number of random eigenvalues in $B(0,\varepsilon)$ to be of order ${\mathcal O}(1)$, we need to tune $\varepsilon= n^{-1/4}$ and the typical spacing should be $n^{-1/4}$ near zero. On the other hand, the typical spacing at any point $z$ where $f_{\infty}(z)>0$ is $n^{-1/2}$. Notice that 
$
n^{-1/4} \gg n^{-1/2}
$. This is confirmed by the simulations which show some repulsion phenomenon at zero, cf.\ Figure \ref{plot-dens-block-model}. In particular, the optimal scale for a local law near zero should be $n^{-1/4}$.
\end{rem}

\begin{figure}[ht] 
\centering
\begin{subfigure}{0.49\textwidth}
  \centering
  \includegraphics[width=\linewidth]{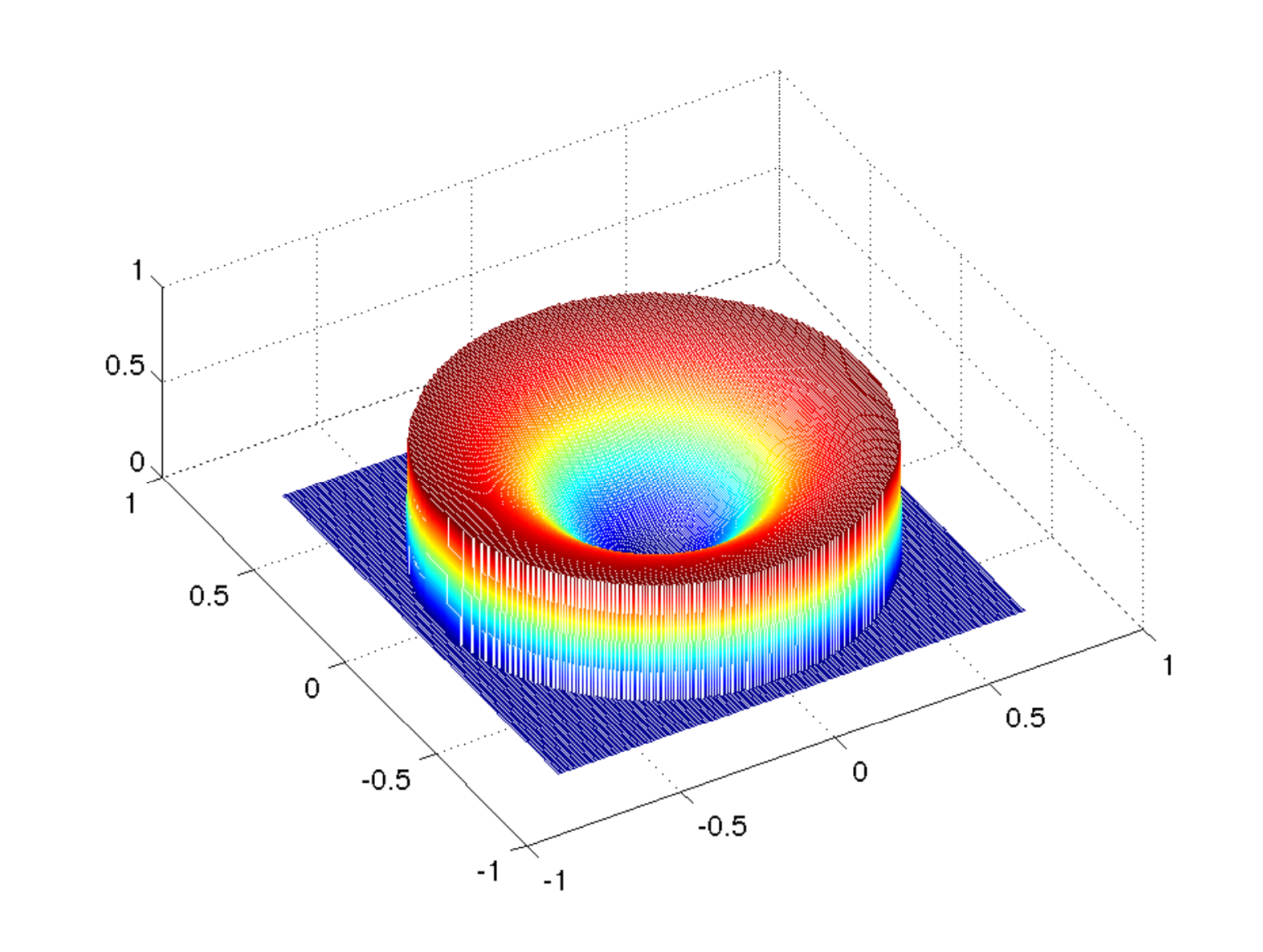}
\end{subfigure}
\begin{subfigure}{.49\textwidth}
  \centering
  \includegraphics[width=\linewidth]{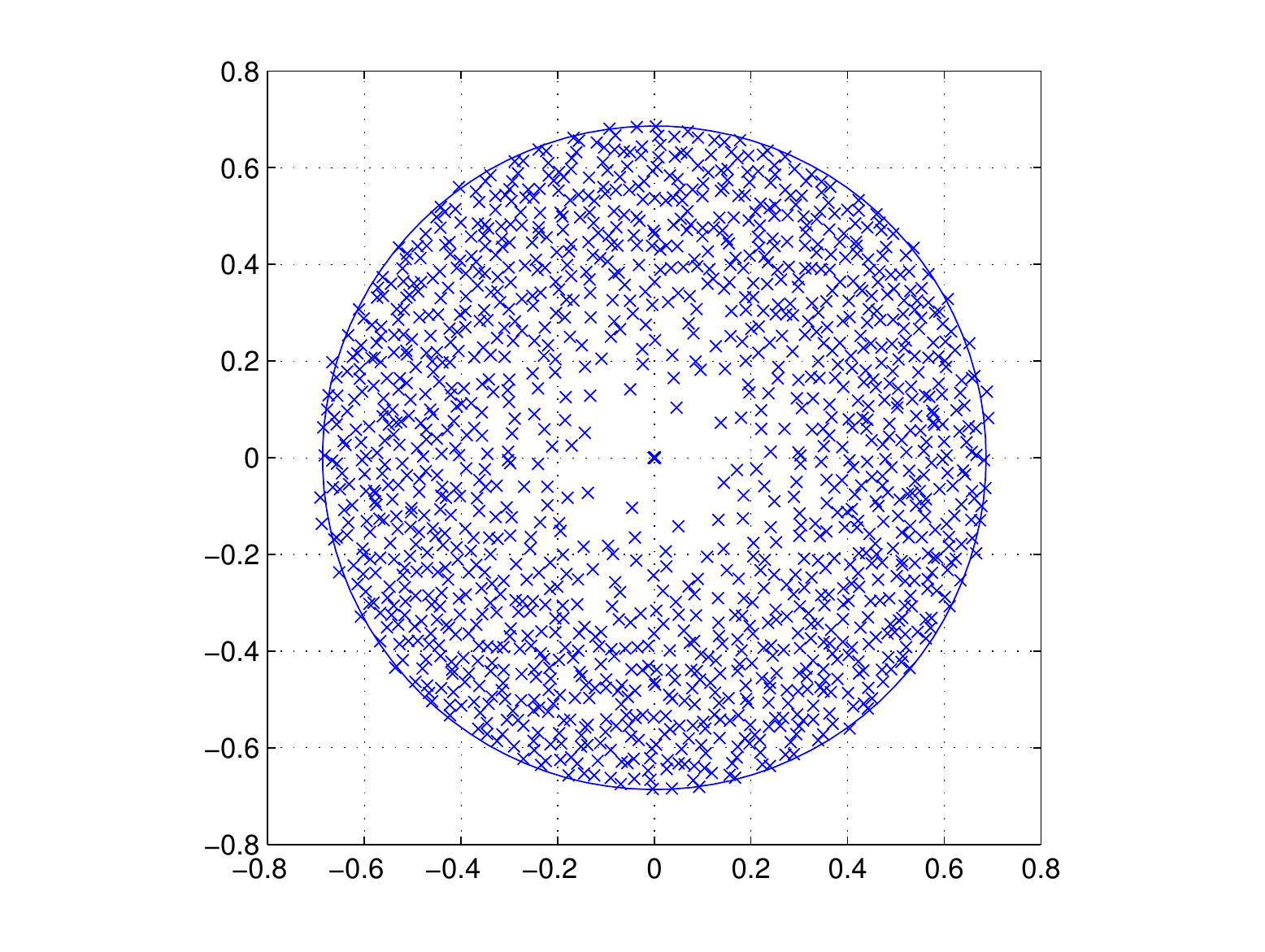}
  \end{subfigure}
\caption{Density $f_{\infty}$ and eigenvalue realizations of a $2001\times 2001$ matrix for the model studied in Proposition \ref{prop:singular.profile} in the case $k=3$. 
A repulsion phenomenon can be observed near zero.}
\label{plot-dens-block-model}
\end{figure}

\begin{proof}[Proof of Proposition \ref{prop:singular.profile}] Simple computations yield $\rho(V_n)=\sqrt{k-1}/k$ and that $V_n$'s spectral measure features a Dirac mass at zero with weight $1-\frac 2k$. Assumption \ref{ass:sigmax} is immediately satisfied, so is \ref{ass:admissible} as the variance profile is symmetric. Item (1) is proved.
We now prove item (2) and first solve the master equations. Since the variance profile is symmetric, we have $\boldsymbol{q}=\boldsymbol{\widetilde q}$ and obviously 
$$
\boldsymbol{q}^\tran=(\underbrace{q\,,\cdots, q}_{m\ \textrm{times}},\underbrace{\check{q}\,,\cdots, \check{q}}_{(k-1)m\ \textrm{times}})
$$
The equations satisfied by $q,\qcheck$ are
$$
q\ =\ \dfrac{k(k-1)\qcheck}{k^2s^2 + (k-1)^2\qcheck^2}\\
\quad \textrm{and} \quad 
\qcheck\ =\ \dfrac{k q}{k^2 s^2 + q^2}
$$
Set $\alpha=\frac{(k-1)(k-2)}2$ and $\beta=\frac{k(k-1)}2$.
We end up with the following equation for $X=q^2$:
$$
s^2 X^2 + 2(k^2 s^4+\alpha)X + k^4 s^6 - k^2(k-1)s^2=0
$$
Hence
$$
\Delta'=\alpha^2 +k^2(k-1)^2s^4\quad \textrm{and}\quad q^2 = \frac{2k\beta s^2 - k^4 s^6}{\sqrt{\Delta'} + k^2s^4 +\alpha}\ .
$$
Now 
$$
\frac 1{mk}\langle \boldsymbol{q},V_n\boldsymbol{q}\rangle = \frac{2(k-1)}{k^2} \frac{kq^2}{k^2s^2 +q^2} =\frac{2(k-1)}k \frac{(k-1) - k^2 s^4}{\sqrt{\Delta'} +\underbrace{\alpha +(k-1)}_{=\beta}} \ .
$$
We finally compute $F_{\infty}$ for $s\le \rho^*$ and notice that a simplification occurs:
\begin{eqnarray*}
F_{\infty}(s)&=& 1 -\frac 1{nk}\langle \boldsymbol{q},V_n\boldsymbol{q}\rangle\\
&=& \frac{k\sqrt{\Delta'} +k\beta - 2(k-1)^2 + 2(k-1) k^2 s^4} {k(\sqrt{\Delta'} + \beta)}
\ =\  \frac{k\sqrt{\Delta'} +\frac{(k-1)(k-2)^2}2 + 2(k-1)k^2 s^4} {k(\sqrt{\Delta'} + \beta)}\\
&=& \frac{k\sqrt{\Delta'} +\frac 2{k-1} \Delta'} {k(\sqrt{\Delta'} + \beta)}\ =\ \frac{2\sqrt{\Delta'}}{k(k-1)} \ =\ \frac 1k \sqrt{(k-2)^2 + 4 k^2 s^4}\ .
\end{eqnarray*}
Part (3) follows by a straightforward computation.
\end{proof}

\section{Separable variance profile}
\label{sec:separable}
The family of separable variance profiles is an interesting instantiation of general variance profiles as it yields simpler and more explicit master equations (see for instance Theorems \ref{th:separable} and \ref{th:separable-sampled}). It is also an abundant source of examples of limiting distributions with density diverging at zero (see Proposition \ref{prop:blow-up-density}).

\subsection{Separable variance profile}

Here we are interested in the following matrix model
\begin{equation}\label{eq:RM-separable}
Y_n=\frac 1{\sqrt{n}} D_n^{1/2} X_n \widetilde D_n^{1/2}\, ,
\end{equation}
where $D_n$ and $\widetilde D_n$ are $n\times n$ diagonal matrices with positive entries.

\begin{enumerate}[leftmargin=*, label={\bf A7}]
\item \label{ass:separable} 
(Separable variance profile).
For each $n\ge 1$ there are deterministic vectors 
$\bd_n, \bdt_n\in (0,\infty)^n$ with components $d_i^{(n)}, \dt_i^{(n)}$ 
respectively such that
\[
A_n\odot A_n = \Big( \big(\sigma_{ij}^{(n)}\big)^2\Big)= \big( d_i^{(n)}\dt_j^{(n))}\big) = \bd_n\bdt_n^\tran.
\]
Moreover there exists $d_{\max} \in (0,\infty)$ such that:
$$
\sup_{n\ge 1} \max(d^{(n)}_i,\widetilde d^{(n)}_i,i\in [n])\ \le\ d_{\max}\, .
$$
\end{enumerate}
\begin{rem}
Notice that in \ref{ass:separable}, we do assume that the $d_i^{(n)},\widetilde d_i^{(n)}$'s are positive but do not assume that they are bounded away from zero.
\end{rem}

Denote by $D_n=\diag(\bd_n)$ and $\widetilde D_n =\diag(\bdt_n)$, then the variance profile in \ref{ass:separable} corresponds to the random matrix model in \eqref{eq:RM-separable}.
This type of model was considered in the context of linear dynamics on structured random networks in \cite{Ahmadian:2015xw}.
\begin{rem}[equivalence with single-sided separable variance profile] \label{rem:single-sided}
Let $\lambda$ be an eigenvalue of $Y_n$ and $\boldsymbol{u}$ its corresponding eigenvector, then 
$$
Y_n \boldsymbol{u} = \lambda \boldsymbol{u} \quad \Rightarrow \quad \frac 1{\sqrt{n}} \left( D_n \widetilde D_n\right)^{1/2}  X_n \left(\widetilde D_n^{1/2}\boldsymbol{u}\right)
= \lambda \left( \widetilde D_n^{1/2} \boldsymbol{u}\right)
$$
In other words, $\lambda$ is also an eigenvalue of matrix $n^{-1/2} \Delta_n^{1/2} X_n$, with $\Delta_n= D_n \widetilde D_n$, corresponding to a single-sided separable variance profile.
\end{rem}
In the sequel, we drop the dependence in $n$ and simply write $A, V,\bd,\bdt,d_i,\dt_i$. As will be shown in the next theorem, the system \eqref{def:ME} of $2n$ equations simplifies into a single equation.
\begin{theo}[Separable variance profile]
\label{th:separable} 
For each $n\ge 1$, let $A_n=(\sigma_{ij})$ be a $n\times n$ matrix with nonnegative elements. Assume that \ref{ass:separable} holds. In this case $V_n=\frac 1n \bs d_n \bs{\widetilde d}_n^\tran$ and $\rho(V)=\frac 1n \langle\bs d, \bs{\widetilde d}\, \rangle$.\begin{enumerate}
\item For each $s\in (0,\sqrt{\rho(V)})$ there exists a unique positive solution $u_n(s)$ to the equation
\begin{eqnarray*}
 \frac 1n \sum_{i\in [n]} \frac {d_i\widetilde d_i}{s^2 + d_i\widetilde d_i u_n(s)}\ =\ 1 .
\end{eqnarray*}
Moreover, the limit $\lim_{s\downarrow 0} u_n(s)$ exists and is equal to one:
$
u_n(0)=\lim_{s\downarrow 0} u_n(s)=1 .
$
If one sets $u_n(s)=0$ for $s\ge \sqrt{\rho(V)}$, then $s\mapsto u_n(s)$ is continuous on $(0,\infty)$ and continuously differentiable on $(0,\sqrt{\rho(V)})$. 
\item The function 
$
F_n(s) = 1 -u_n(s)$, $s \ge 0$
defines a rotationally invariant probability measure $\mu_n$ by
\[
\mu_n( \{ z \, : \, 0 \leq |z| \leq s \}) = F_n(s), \quad s \ge  0 . 
\]
In particular, 
$
\mu_n(\{0\})~=~0\ .
$
\item On the set $\{ z: |z| < \sqrt{\rho(V)}\}$, $\mu_n$ admits the density 
\[
f_n(z) = \frac 1\pi \left(\sum_{i\in[n]} \dfrac {d_i\widetilde d_i}{(|z|^2 +d_i\widetilde d_i u_n(|z|))^2}\right)
\left(\sum_{i\in[n]} \dfrac {d^2_i\widetilde d^2_i}{(|z|^2 +d_i\widetilde d_i u_n(|z|))^2}\right)^{-1} ,
\]
and the support of $\mu_n$ is exactly $\{ z: |z| \le \sqrt{\rho(V)}\}$.
\item In particular, the density is bounded at $z=0$ with value
\[
f_n(0) = \frac 1{n\pi} \sum_{i\in[n]} \dfrac {1}{d_i\widetilde d_i}\ .
\]
\end{enumerate}
Let $(Y_n)_{n\ge1}$ be as in Definition \ref{def:model} and assume that \ref{ass:moments} holds. 
\begin{enumerate}
\setcounter{enumi}{4}
\item Asymptotically,
\[
\mu^Y_n \sim \mu_n \quad \text{in probability}\quad (\text{as}\ n\to \infty) .
\]
\end{enumerate}
\end{theo}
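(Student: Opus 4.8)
The plan is to derive parts (1)--(4) as a purely deterministic consequence of the rank-one structure of $V_n$, and to obtain part~(5) from Theorem~\ref{thm:main} after passing, via Remark~\ref{rem:single-sided}, to a single-sided profile for which admissibility is elementary. First I reduce the Master Equations. Since $V_n=\frac1n\bd\bdt^\tran$ has rank one, for any $\bq,\bqt\succ0$ one has $(V_n\bqt)_i=d_i\alpha$ and $(V_n^\tran\bq)_i=\dt_i\beta$ with $\alpha:=\frac1n\langle\bdt,\bqt\rangle$ and $\beta:=\frac1n\langle\bd,\bq\rangle$; substituting into \eqref{def:ME} forces $q_i=\dt_i\beta/(s^2+d_i\dt_i\alpha\beta)$ and $\qt_i=d_i\alpha/(s^2+d_i\dt_i\alpha\beta)$. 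Multiplying the first identity by $d_i$, summing over $i$ and dividing by $n$ (and likewise using $\dt_i$ in the second) shows that $u:=\alpha\beta$ solves $\frac1n\sum_{i\in[n]}d_i\dt_i/(s^2+d_i\dt_i u)=1$, while the side constraint $\sum q_i=\sum\qt_i$ fixes $\alpha,\beta$ individually. As $V_n$ has strictly positive entries it is irreducible, Theorem~\ref{thm:master} applies, and its solution $\qvec(s)$ is exactly the one produced above; in particular $\frac1n\langle\bq(s),V_n\bqt(s)\rangle=\alpha\beta=u_n(s)$, so the $F_n$ of Theorem~\ref{thm:main} equals $F_n(s)=1-u_n(s)$.

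For parts (1)--(4), fix $s>0$ and set $g_s(u):=\frac1n\sum_i d_i\dt_i/(s^2+d_i\dt_i u)$ for $u\ge0$: this is continuous, strictly decreasing, with $g_s(0)=\rho(V)/s^2$ and $g_s(\infty)=0$, so $g_s(u)=1$ has a unique root $u_n(s)\ge0$, positive precisely when $s<\sqrt{\rho(V)}$, in agreement with the convention $u_n(s)=0$ for $s\ge\sqrt{\rho(V)}$. Since $g_s$ decreases in $s$, $u_n$ is strictly decreasing; passing to the limit in $g_s(u_n(s))=1$ and ruling out $u_n(0^+)=+\infty$ via $g_s(u)\le 1/u$ gives $u_n(0^+)=1$, and a squeezing argument against $g_s$ at a fixed level gives $u_n(s)\to0$ as $s\uparrow\sqrt{\rho(V)}$, so $u_n$ is continuous on $(0,\infty)$. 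The implicit function theorem applied to $(s,u)\mapsto g_s(u)-1$, whose partial derivative in $u$ is strictly negative, gives $u_n\in C^\infty$ on $(0,\sqrt{\rho(V)})$, and implicit differentiation yields $u_n'(s)=-2s\big(\sum_i d_i\dt_i/(s^2+d_i\dt_i u_n)^2\big)\big(\sum_i d_i^2\dt_i^2/(s^2+d_i\dt_i u_n)^2\big)^{-1}$. Plugging $f_n(z)=\tfrac1{2\pi|z|}F_n'(|z|)=-u_n'(|z|)/(2\pi|z|)$ from \eqref{eq:density} gives the density formula of part~(3), and $u_n(0)=1$ gives $f_n(0)=\frac1{n\pi}\sum_i(d_i\dt_i)^{-1}$. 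Finally $F_n(0)=1-u_n(0)=0$ forbids an atom at $0$, and since $f_n>0$ throughout the open disc while $F_n\equiv1$ on $[\sqrt{\rho(V)},\infty)$, the support of $\mu_n$ is exactly $\{|z|\le\sqrt{\rho(V)}\}$.

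For part (5), Remark~\ref{rem:single-sided} shows $Y_n$ has the same spectrum as $B_n:=n^{-1/2}\Delta_n^{1/2}X_n$ with $\Delta_n=D_n\widetilde D_n$, a matrix carrying the single-sided profile $\sigma_{ij}^2=\Delta_i:=d_i\dt_i\in(0,d_{\max}^2]$; this profile satisfies \ref{ass:sigmax} and is irreducible, so it suffices to verify \ref{ass:admissible}. Its normalized variance profile $V$ obeys $(V\brt)_i=\Delta_i\Phi$ and $(V^\tran\br)_i=\tilde\beta$ with $\Phi:=\frac1n\sum_i r_i=\frac1n\sum_i\rt_i$ (by \eqref{q_t:trace}) and $\tilde\beta:=\frac1n\sum_i\Delta_i r_i$, so the Regularized Master Equations reduce to $r_i=(\tilde\beta+t)/(s^2+(\Delta_i\Phi+t)(\tilde\beta+t))$; bounding the denominator below by $s^2$ gives $\Phi\le(\tilde\beta+t)/s^2$, and bounding it below by $\Delta_i\Phi(\tilde\beta+t)$ gives $\Phi\tilde\beta\le1$, whence $s^2\Phi^2\le1+t\Phi\le1+\Phi$ and $\Phi\le\frac{1+\sqrt{1+4s^2}}{2s^2}$, uniformly in $n\ge1$ and $t\in(0,1]$; thus \ref{ass:admissible} holds. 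Theorem~\ref{thm:main} then gives $\mu_n^Y=\mu_n^{B_n}\sim\mu_n'$ for the deterministic equivalent $\mu_n'$ of $B_n$, and reducing the Master Equations of the $\Delta_n$-profile as in the first paragraph shows $\mu_n'$ is the radial probability measure with CDF $1-u_n(s)$; hence $\mu_n'=\mu_n$ and $\mu_n^Y\sim\mu_n$.

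The only step with genuine content is this last uniform (in $t\downarrow0$) verification of \ref{ass:admissible}: the naive bound $\Phi\le(\tilde\beta+t)/s^2$ alone degenerates as $s\downarrow0$, and one really needs the complementary inequality $\Phi\tilde\beta\le1$ coming from the product term in the denominator. Everything else reduces, once the rank-one structure is exploited, to elementary one-variable calculus.
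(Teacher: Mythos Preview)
Your proof is correct. The reduction of the Master Equations to the scalar equation and the identification $F_n(s)=1-u_n(s)$ are the same as the paper's, but the two proofs diverge in how they handle the deterministic analysis and the admissibility check.

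For parts (1)--(4), the paper pulls the existence, uniqueness, continuity and $C^1$ regularity of $u_n$ directly from the corresponding properties of $\qvec(s)$ given by Theorem~\ref{thm:master}, and defers the explicit density formula and the value at zero to the proof of Theorem~\ref{th:separable-sampled}. You instead work intrinsically with the scalar map $g_s(u)$, using strict monotonicity for existence/uniqueness and the implicit function theorem for smoothness and the derivative formula; this is more self-contained and makes the structure of $f_n$ transparent without any forward reference.

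For part (5) the paper takes a slicker route: it applies Remark~\ref{rem:single-sided} \emph{twice}, passing from $D^{1/2}X\widetilde D^{1/2}$ to $(D\widetilde D)^{1/2}X$ and then to $\Delta^{1/2}X\Delta^{1/2}$ with $\Delta=(D\widetilde D)^{1/2}$, whose variance profile is symmetric; admissibility then follows immediately from \ref{ass:symmetric} via \cite[Proposition~2.6]{cook2018non}. Your single application of the remark lands on a one-sided profile and you then verify \ref{ass:admissible} by hand via the pair of inequalities $\Phi\le(\tilde\beta+t)/s^2$ and $\Phi\tilde\beta\le1$. This works, and has the virtue of being completely explicit, but the paper's trick of symmetrizing avoids the computation entirely and shows why no new estimate is actually needed in the separable case.
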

\begin{proof} This theorem is essentially a specification of Theorems \ref{thm:master} and \ref{thm:main} to the case of the variance profile $\bs d\bs{\widetilde d}^\tran$. Introduce the quantities 
$\alpha_n=\frac 1n \langle \bs d, \bs q\, \rangle$ and $\widetilde \alpha_n =\frac 1n \langle \bs{\widetilde d}, \bs{\widetilde q}\, \rangle$
which satisfy the system
\begin{eqnarray*}
1=\frac 1n \sum_{i\in [n]} \frac {d_i\widetilde d_i}{s^2 + d_i\widetilde d_i \alpha_n \widetilde \alpha_n}\quad \textrm{and}
\quad 
\alpha_n\sum_{i\in [n]}\frac{\widetilde d_i}{s^2 + d_i\widetilde d_i \alpha_n \widetilde \alpha_n} =
\widetilde \alpha_n\sum_{i\in [n]}\frac{d_i}{s^2 + d_i\widetilde d_i \alpha_n \widetilde \alpha_n}
\end{eqnarray*}
for $s\in (0,\sqrt{\rho(V)})$ and are equal to zero if $s\ge \sqrt{\rho(V)}$. The function $F_n$ given in \eqref{expF} becomes $F_n(s) = 1 - \alpha_n(s)\widetilde \alpha_n(s)$. Set $u_n(s)=\alpha_n(s)\widetilde \alpha_n(s)$. Notice that $u_n$ satisfies the first equation in Theorem \ref{th:separable}-(1). 
All the other properties of $u_n$ follow from those of $\bs q,\bs{\widetilde q}$, except that $u_n(0)=1$. To prove the later introduce 
$
\xi_{\min}=n^{-1}\sum_{i\in [n]} d_i \widetilde d_i\, >\, 0
$ and recall the definition of $d_{\max}$ in \ref{ass:separable}. Then
$$
\frac{\xi_{\min}}{s^2+ d_{\max}^2 u_n(s)} \ \le\  1\ \le\  \frac{1}{u_n(s)}\ .
$$ 
We deduce that $u_n(s)$ is bounded away from zero and upper bounded as $s\downarrow 0$. Taking the limit in the equation satisfied by $u_n(s)$ as $s\downarrow 0$ along a converging subsequence finally yields that $u_n(s)\xrightarrow[s\to0]{} 1$. 

We do not prove items (3)--(4) since they can be proved as in Theorem \ref{th:separable-sampled}-(3)--(4) below. 

In order to prove Item (5), we need to verify assumption \ref{ass:admissible}. Consider the following random matrix models:
$$
Y_n=\frac 1{\sqrt{n}} D_n^{1/2} X_n \widetilde D_n^{1/2}\ ,\quad 
Y_n^{(2)}= \frac 1{\sqrt{n}} \left( D_n \widetilde D_n\right)^{1/2} X_n\ ,\quad 
Y_n^{(3)}= \frac 1{\sqrt{n}} \Delta_n^{1/2} X_n \Delta_n^{1/2} \, ,
$$
where $\Delta_n =  \left( D_n \widetilde D_n\right)^{1/2}$. Applying Remark \ref{rem:single-sided} twice, these matrix models all have the same spectrum. Moreover, 
the variance profile of matrix $Y_n^{(3)}$ writes $V_n=\frac 1n \left( \sqrt{d_i \widetilde d_i} \right) \left(\sqrt{d_i \widetilde d_i}\right)^\tran$ which is symmetric, fulfilling Assumption \ref{ass:symmetric}, and is hence \ref{ass:admissible} by \cite[Proposition 2.6]{cook2018non}.   
\end{proof}
If the quantities $d_i,\widetilde d_i$ correspond to regular evaluations of continuous functions $d,\widetilde d:[0,1]\to [0,\infty)$, then one obtains a genuine limit. Notice that in this case \ref{ass:sigmax} and \ref{ass:sigmin} and hence \ref{ass:admissible} hold.
\begin{theo}[Sampled and separable variance profile]
\label{th:separable-sampled} Let $d,\widetilde d:[0,1] \to [0,\infty)$ be continuous functions satisfying 
$$
d(0),\widetilde d(0)\ge 0\qquad \textrm{and}\qquad d(x),\widetilde d(x)>0 \qquad \textrm{for}\quad x\in (0,1]\, .
$$
Define a variance profile $(\sigma^2_{ij})$ by
\[
\sigma_{ij}^2=d\left( \frac in\right) \widetilde d\left( \frac jn\right) .
\]
Denote by $\rho_\infty=\int_0^1 d(x)\widetilde d(x)\, dx$.
\begin{enumerate}
\item For any $s\in (0,\sqrt{\rho_\infty})$  there exists a unique positive solution $u_\infty(s)$ to the equation
\begin{eqnarray*}
 \int_0^1\frac {d(x)\widetilde d(x)}{s^2 + d(x)\widetilde d(x) u_\infty(s)}\ dx= 1 .
\end{eqnarray*}
If one sets $u_\infty(s)=0$ for $s\ge \sqrt{\rho_\infty}$, then $s\mapsto u_\infty(s)$ is continuous on $(0,\infty)$.
Moreover, the limit $u_\infty(0)=\lim_{s\downarrow 0} u_\infty(s)$ exists and 
$
u_\infty(0) = 1 .
$
\item The function 
\[
F_\infty(s) = 1 -u_\infty(s)  , 
\quad s \ge 0
\]
defines a rotationally invariant probability measure $\mu_\infty$ by
\[
\mu_\infty( \{ z \, : \, 0 \leq |z| \leq s \}) = F_\infty(s), \quad s \ge 0\, ,\quad\textrm{and}\quad  \mu_\infty(\{ 0\})=0\ .
\]
\item The function $s\mapsto u_\infty(s)$ is continuously differentiable on $(0,\sqrt{\rho(V)})$ and $\mu_\infty$ admits the density
\[
f_\infty(z) = \frac 1\pi  \left(\int_0^1 \dfrac{d(x)\widetilde d(x)}{(|z|^2 +d(x)\widetilde d(x) u_\infty(|z|))^2}\, dx\right) 
\left(\int_0^1 \dfrac{d^2(x)\widetilde d^2(x)}{(|z|^2 +d(x)\widetilde d(x) u_\infty(|z|))^2}\, dx \right)^{-1}
\] 
on the set $\{ z:\ |z|<\sqrt{\rho_\infty}\}$ and $f_{\infty}=0$ for $|z|>\sqrt{\rho_\infty}$. In particular, 
the support of $\mu_\infty$ is equal to $\{ z;\ |z|\le \sqrt{\rho_\infty}\}$.
\item If the integral 
$
\int_0^1 ( d(x)\widetilde d(x))^{-1}dx
$ 
is finite, then the density $f_{\infty}$ is bounded at $z=0$ 
with value
\[
f_\infty(0) = \frac 1\pi \int_0^1 \frac{\, dx}{d(x)\widetilde d(x)}\ .
\]
\end{enumerate}
Let $(Y_n)_{n\ge1}$ be as in Definition \ref{def:model} and assume that \ref{ass:moments} holds. 
\begin{enumerate}
\setcounter{enumi}{4}
 \item Asymptotically,
\[
\mu^Y_n \xrightarrow[n\to \infty]{w} \mu_\infty \quad \text{in probability}\ .
\]
\end{enumerate}
\end{theo}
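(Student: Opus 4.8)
The plan is to treat this as the continuous counterpart of Theorem~\ref{th:separable}; in particular the computations in items (3)--(4) are precisely those referenced but omitted in the proof of Theorem~\ref{th:separable}. I would set $\xi(x)=d(x)\widetilde d(x)$, which is continuous on $[0,1]$, positive on $(0,1]$, and bounded by $\|\xi\|_\infty<\infty$, and study
\[
G(s,u)=\int_0^1\frac{\xi(x)}{s^2+\xi(x)u}\,dx ,\qquad s>0,\ u\ge 0 .
\]
For fixed $s>0$, $u\mapsto G(s,u)$ is strictly decreasing and strictly convex, with $G(s,0^+)=\rho_\infty/s^2$ and $G(s,u)\to0$ as $u\to\infty$, so for $s\in(0,\sqrt{\rho_\infty})$ the equation $G(s,u)=1$ has a unique positive root $u_\infty(s)$, and for $s\ge\sqrt{\rho_\infty}$ we set $u_\infty(s)=0$. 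Joint continuity of $G$ on $(0,\infty)\times[0,\infty)$, by dominated convergence (the integrand being bounded by $\|\xi\|_\infty/s^2$ locally in $s$), plus strict monotonicity in $u$, yields continuity of $u_\infty$ on $(0,\infty)$, and strict monotonicity of $G$ in $s$ shows $u_\infty$ — hence $F_\infty=1-u_\infty$ — is monotone. The elementary bounds $u_\infty(s)\le1$ (from $G(s,u)\le1/u$) and $u_\infty(s)\ge(\rho_\infty-s^2)/\|\xi\|_\infty$ (from $G(s,u)\ge\rho_\infty/(s^2+\|\xi\|_\infty u)$) confine $u_\infty(s)$ to a compact subinterval of $(0,\infty)$ as $s\downarrow0$; passing to a subsequential limit in $G(s,u_\infty(s))=1$ (again dominated convergence) then forces $1/u_0=1$, i.e. $u_\infty(0)=1$. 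This gives (1), and (2) follows: $F_\infty$ is continuous, nondecreasing, with $F_\infty(0)=0$ and $F_\infty\equiv1$ on $[\sqrt{\rho_\infty},\infty)$, hence the CDF of a probability measure on $[0,\sqrt{\rho_\infty}]$ defining the rotationally invariant $\mu_\infty$ with $\mu_\infty(\{0\})=F_\infty(0)=0$.

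For (3) I would apply the implicit function theorem to $G(s,u)-1=0$ on $(0,\sqrt{\rho_\infty})\times(0,\infty)$: the partial derivatives are continuous and
\[
\partial_u G(s,u)=-\int_0^1\frac{\xi(x)^2}{(s^2+\xi(x)u)^2}\,dx<0 ,
\]
so $u_\infty\in C^1$ with $u_\infty'(s)=-\partial_sG/\partial_uG=-2s\big(\int_0^1\xi\,(s^2+\xi u_\infty)^{-2}\,dx\big)\big(\int_0^1\xi^2\,(s^2+\xi u_\infty)^{-2}\,dx\big)^{-1}<0$. By rotational symmetry — the same computation that gives \eqref{eq:density} — $f_\infty(z)=(2\pi|z|)^{-1}F_\infty'(|z|)=-(2\pi|z|)^{-1}u_\infty'(|z|)$ on $0<|z|<\sqrt{\rho_\infty}$, which is the stated formula after substituting $\xi=d\widetilde d$ and cancelling the $|z|$; strict negativity of $u_\infty'$ makes $F_\infty$ strictly increasing on $[0,\sqrt{\rho_\infty}]$, so every annulus inside the disk of radius $\sqrt{\rho_\infty}$ carries positive mass and $\mathrm{supp}\,\mu_\infty$ is the full closed disk. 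For (4), assuming $\int_0^1\xi^{-1}<\infty$, I would let $z\to0$: since $u_\infty(|z|)\to1$ and is bounded below near $0$, the numerator and denominator integrands in the density formula are dominated by $C\xi^{-1}$ and $C$ respectively, so dominated convergence gives numerator $\to\int_0^1\xi^{-1}$ and denominator $\to1$, whence $f_\infty(0)=\pi^{-1}\int_0^1(d(x)\widetilde d(x))^{-1}dx$.

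For (5): since $i/n\in(0,1]$ for $i\in[n]$, all $d(i/n),\widetilde d(i/n)$ are positive, and they are bounded since $d,\widetilde d$ are continuous on $[0,1]$; hence \ref{ass:separable} holds, so under \ref{ass:moments} Theorem~\ref{th:separable} applies and yields $\mu_n^Y\sim\mu_n$ in probability, where $\mu_n$ is rotationally invariant with radial CDF $F_n=1-u_n$, $u_n(s)$ solving $G_n(s,u):=\frac1n\sum_{i\in[n]}\xi(i/n)(s^2+\xi(i/n)u)^{-1}=1$, and $\rho(V_n)=\overline\xi_n:=\frac1n\sum_{i\in[n]}\xi(i/n)\to\rho_\infty$. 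It then remains to show $\mu_n\to\mu_\infty$ weakly, after which $\mu_n^Y\to\mu_\infty$ weakly in probability follows by the triangle inequality in a metric for weak convergence. I would fix $s\in(0,\sqrt{\rho_\infty})$: the same bounds as above (with $\overline\xi_n\to\rho_\infty$) confine $u_n(s)$ to a fixed compact $[c,1]\subset(0,\infty)$ for all large $n$; since $(x,u)\mapsto\xi(x)(s^2+\xi(x)u)^{-1}$ is uniformly continuous on $[0,1]\times[c,1]$, the Riemann sums $G_n(s,\cdot)$ converge to $G(s,\cdot)$ uniformly on $[c,1]$, and combined with $|\partial_uG(s,u)|\ge\delta>0$ there, the stability estimate $\delta|u_n(s)-u_\infty(s)|\le|G(s,u_n(s))-G_n(s,u_n(s))|\to0$ (using $G(s,u_\infty(s))=G_n(s,u_n(s))=1$ and the mean value theorem) gives $u_n(s)\to u_\infty(s)$, i.e. $F_n(s)\to F_\infty(s)$; for $s>\sqrt{\rho_\infty}$ one has $s>\sqrt{\rho(V_n)}$ eventually, so $F_n(s)=1=F_\infty(s)$. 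Since $F_\infty$ is continuous, pointwise convergence on this dense set yields $\mu_n\to\mu_\infty$ weakly for the radial parts, hence on $\C$ by rotational invariance.

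I expect the main obstacle to be step (5): one must pin $u_n(s)$ into a fixed compact subinterval of $(0,\infty)$ uniformly in $n$ so that the Riemann-sum convergence is uniform and the root of the limiting equation is stable; the mild degeneracy $\xi(0)\ge0$ is harmless since it affects only a set of measure zero (and no grid point $i/n$ with $i\ge1$). Items (1)--(4) should be a routine calculus and implicit-function-theorem translation of the discrete statement of Theorem~\ref{th:separable}.
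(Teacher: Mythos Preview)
Your proposal is correct, but the overall architecture differs from the paper's. The paper constructs $u_\infty$ \emph{as a limit of the discrete solutions} $u_n$ from Theorem~\ref{th:separable}: it derives a uniform Lipschitz estimate for $s\mapsto u_n(s)$ on compacts of $(0,\sqrt{\rho_\infty})$ from the difference identity
\[
(t^2-s^2)\,\frac1n\sum_i\frac{\xi_i}{(s^2+\xi_i u_n(s))(t^2+\xi_i u_n(t))}
=-(u_n(t)-u_n(s))\,\frac1n\sum_i\frac{\xi_i^2}{(\cdots)(\cdots)}\,,
\]
applies Arzel\`a--Ascoli, and identifies the (unique) accumulation point as $u_\infty$; differentiability in (3) is obtained by passing to the limit in the same difference quotient rather than via the implicit function theorem. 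In that scheme, the convergence $u_n\to u_\infty$ needed for (5) comes for free. Your route instead analyzes the continuous scalar equation $G(s,u)=1$ directly (IVT + strict monotonicity for existence/uniqueness, IFT for $C^1$), and then proves $u_n\to u_\infty$ separately by a Riemann-sum stability estimate.

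Your approach is somewhat more elementary and self-contained for (1)--(4), and the stability argument in (5) is clean; the paper's approach has the virtue that existence of $u_\infty$ and the convergence $u_n\to u_\infty$ are obtained simultaneously, and the difference-quotient identity it uses is exactly the computation underlying your IFT step, so the two routes share the same analytic core.
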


\begin{proof}
Denote by $d_i=d(i/n), \widetilde d_j=d(j/n)$. The associated vectors $\bs d, \bs{\widetilde d}$ meet the conditions of Assumption \ref{ass:separable}. In order to study the existence of $s\mapsto u_\infty(s)$ and its properties, we introduce the solution $u_n\in [0,1]$ defined in Theorem \ref{th:separable}-(1). Notice that $\rho(V)=\frac 1n \bs{\widetilde d}^\tran\bs d\to \rho_\infty$ as $n\to \infty$.

We prove parts (1) and (2). We establish the existence of $s\mapsto u_\infty(s)$ by relying on Arzela--Ascoli's theorem. 

Denote by 
$$
\delta_{\min}=\liminf_{n\ge 1} \frac 1n \sum_{i\in [n]} d^2_i\widetilde d^2_i \ =\ \int_0^1 d^2(x)\widetilde d^2(x)\, dx\ >\ 0\, .
$$ 
and let $d_{\max}=\sup_{x\in [0,1]} \max( d(x), \widetilde d(x))$.
Let $s,t>0$ be such that $s,t<\sqrt{\rho_\infty}$. For $n$ large enough, $s,t<\sqrt{\rho(V)}$ and 
\begin{multline}\label{eq:discrete-derivative}
(t^2-s^2) \frac 1n \sum_{i\in [n]} \frac{ d_i \widetilde d_i}{(s^2 +d_i \widetilde d_i u_n(s))(t^2 +d_i \widetilde d_i u_n(t))}\\
=-(u_n(t)-u_n(s))\frac 1n \sum_{i\in [n]} \frac{ d^2_i \widetilde d^2_i}{(s^2 +d_i \widetilde d_i u_n(s))(t^2 +d_i \widetilde d_i u_n(t))}\ ,
\end{multline}
where the latter follows by simply subtracting equation in Theorem \ref{th:separable}-(i) evaluated at $s$ to itself evaluated at $t$.
Let $\eta\in (0,\delta_{\min})$. Then for $n$ large enough,
\begin{align*}
\frac 1n \sum_{i\in [n]} \frac{ d_i \widetilde d_i}{(s^2 +d_i \widetilde d_i u_n(s))(t^2 +d_i \widetilde d_i u_n(t))}&\le \frac {\boldsymbol{d}^2_{\max}}{s^2 t^2}\ ,\\
\frac 1n \sum_{i\in [n]} \frac{ d^2_i \widetilde d^2_i}{(s^2 +d_i \widetilde d_i u_n(s))(t^2 +d_i \widetilde d_i u_n(t))} &\ge \frac{\delta_{\min}-\eta}{(\rho(V) +\boldsymbol{d}^2_{\max})^2}\ .
\end{align*}
Plugging these two estimates into Eq. \eqref{eq:discrete-derivative} yields for $n$ large enough
$$
\left| u_n(t) - u_n(s)\right| \ \le\  K \times \frac{t+s}{t^2 s^2} \times |t-s|\ ,
$$
where $K$ depends on $\delta_{\min}$ and $d_{\max}$. Notice in particular that $u_n$ being Lipschitz in any interval $[a,b]\subset (0,\sqrt{\rho_\infty})$ is an equicontinuous family. By Arzela--Ascoli's theorem, the sequence $(u_n)$ is relatively compact for the supremum norm on any interval $[a,b]\subset (0,\sqrt{\rho_\infty})$. Let $u$ be an accumulation point for $s\in [a,b]\subset (0,\sqrt{\rho_\infty})$, then $u(s)\in (0,1)$ and $s\mapsto u(s)$ is non-increasing. By continuity  
\begin{equation}\label{eq:continuous}
\int_0^1 \frac{d(x)\widetilde d(x)}{s^2 + d(x)\widetilde d(x) u(s)}\, dx =1 \quad \textrm{for}\quad s\in [a,b]\ ,
\end{equation}
hence the existence. If $u$ and $\widetilde u$ are two accumulation points of $(u_n)$ on $[a,b]$, then 
$$
(\widetilde u(s) - u(s) ) \int_0^1 \frac{d^2(x)\widetilde d^2(x)}{(s^2 +d(x)\widetilde d(x) u(s))(s^2 +d(x) \widetilde d(x) \widetilde u(s))}\, dx =0\ .
$$
By relying on the same estimates as in the discrete case, one proves that the integral on the l.h.s. is positive and hence $u=\widetilde u=u_\infty$. The uniqueness and the continuity of a solution to \eqref{eq:continuous} is established for $s\in(0,\sqrt{\rho_\infty})$. Using similar arguments, one can prove that $u_\infty(s)>0$ for $s\in(0,\sqrt{\rho_\infty})$, that $s\mapsto u_\infty(s)$ satisfies the Cauchy criterion for functions as $s\downarrow 0$ and $s\uparrow \sqrt{\rho_{\infty}}$. In particular, $u$ admits a limit as  $s\downarrow 0$ and $s\uparrow \sqrt{\rho_{\infty}}$ and it is not difficult to prove that 
$$
\lim_{s\downarrow 0} u_\infty(s) =1\quad \textrm{and}\quad \lim_{s\uparrow \sqrt{\rho_{\infty}}} u_\infty(s) =0\ .
$$ 
We now prove (3) and establish that $s\mapsto u_\infty(s)$ is differentiable on $(0,\sqrt{\rho_\infty})$. By considering the continuous counterpart of equation \eqref{eq:discrete-derivative}, we obtain
$$
\frac{u_\infty(t) - u_\infty(s)}{t-s} =- (t+s) \frac{\int_0^1 \dfrac{d(x)\widetilde d(x)}{(s^2 +d(x)\widetilde d(x) u_\infty(s))(t^2 +d(x)\widetilde d(x) u_\infty(t))}\, dx }
{\int_0^1 \dfrac{d^2(x)\widetilde d^2(x)}{(s^2 +d(x)\widetilde d(x) u_\infty(s))(t^2 +d(x)\widetilde d(x) u_\infty(t))}\, dx }\ .
$$
The r.h.s. of the equation admits a limit as $t\to s$, hence the existence and expression of $u_\infty$'s derivative:
$$
u_\infty'(s) = - 2s  \left(\int_0^1 \dfrac{d(x)\widetilde d(x)}{(s^2 +d(x)\widetilde d(x) u_\infty(s))^2}\, dx \right)
\left(\int_0^1 \dfrac{d^2(x)\widetilde d^2(x)}{(s^2 +d(x)\widetilde d(x) u_\infty(s))^2}\, dx \right)^{-1}
$$ 
for $s\in (0,\sqrt{\rho_\infty})$. This limit is continuous in $s$. The density follows from Equation~\eqref{eq:density}: 
$$
f_\infty(z)=-\frac 1{2\pi|z|} u'_{\infty}(|z|)\ .
$$
Item (4) follows from the fact that $u_{\infty}(0)=1$ and by a continuity argument.

We now establish (5). Notice first that $F_\infty(s)=1-u_\infty(s)$ is the cumulative distribution function of a rotationally invariant probability measure on $\C$. Since $u_n\to u_\infty$ for $s\ge 0$ (some care is required to prove the convergence for $s=\sqrt{\rho_\infty}$ but we leave the details to the reader), one has $\mu_n\xrightarrow[n\to\infty]{w} \mu_\infty$. Combining this convergence with Theorem \ref{th:separable}-(3) yields the desired convergence.  
The proof of Theorem \ref{th:separable-sampled} is complete.\end{proof}
\subsection{Example: Girko's Sombrero distribution} \label{sec:Sombrero}
Consider the separable variance profile $\bs d \bs{\widetilde d}^\tran$ with the first $k$ entries of $\bs d$ equal to $a>0$, the last $n-k$ equal to $b>0$ and all the entries of $\bs{\widetilde d}$ equal to $1$. Denote by $\alpha=\frac kn$, by $\beta=\frac{n-k}n$ and by $\rho= \alpha a +\beta b$ the spectral radius of 
$V_n=\frac 1n \bs d\bs{\widetilde d}^\tran$. Below, we apply Theorem \ref{th:separable} and obtain
\begin{equation}	\label{sombrero}
f_n(z) = \frac 1{2\pi ab}\left( (a+b) - \frac{|z|^2(a-b)^2 +ab[2(\alpha a +\beta b) - (a+b)]}{\sqrt{|z|^4(a-b)^2 +2|z|^2ab[2(\alpha a +\beta b) - (a+b)] + a^2b^2}}\right)
\end{equation}
for $|z|<\sqrt{\rho}$ and $f_n(z)=0$ elsewhere. This formula was also derived and further studied in \cite[Eq. (2.63)]{Ahmadian:2015xw}. In the case where $\alpha=\beta =\frac 12$, we recover Girko's Sombrero probability distribution \cite[Section 26.12]{girko-canonical-equations-I}:
\begin{align*}
f_n(z)&= \frac 1{2\pi ab}\left( (a+b) - \frac{|z|^2(a-b)^2}{\sqrt{|z|^4(a-b)^2  + a^2b^2}}\right)\quad \textrm{for} \quad s<\sqrt{\frac {a+b}2}\ .
\end{align*}
In the case $a=b$, we recover the circular law.
\begin{proof}[Computation of $f_n$]
To compute $f_n$,  we proceed as follows: Theorem \ref{th:separable} yields the equation
$$
 \frac {\alpha a}{s^2 + au_n(s)} +   \frac{\beta b}{s^2+ bu_n(s)} =1
$$
with positive solution for $s<\sqrt{\rho}$ 
$$
u_n(s) = \frac{-[s^2(a+b) -ab] +\sqrt{s^4(a-b)^2 +2s^2ab[2(\alpha a +\beta b) - (a+b)] + a^2b^2}}{2ab}\ .
$$
After applying Theorem \ref{th:separable}-(2) and \eqref{eq:density}, 
a short computation now yields \eqref{sombrero}.
\end{proof}

\subsection{Examples of unbounded densities near $z=0$}\label{sec:unbounded}
We now consider a family of separable variance profiles that yield deterministic equivalents with a wide variety of behaviors at zero. 

Consider a separable variance profile $\sigma^2_{ij} = d(i/n) d(j/n)$ where $d:[0,1]\to [0,\infty)$ with $d(0)\ge 0$ and $d(x)>0$ for $x\in(0,1]$.
From Theorem \ref{th:separable-sampled}, we have that if the integral $\int_0^1 d^{-2}(x)dx$ is finite, then so is the density at zero with value:
$$
f_{\infty}(0) = \frac 1\pi \int_0^1 \frac {d\, x}{d^2(x)}\ .
$$
In order to build a distribution $\mu_{\infty}$ whose density at zero blows up, we consider cases where $\int_0^1 d^{-2}(x)dx=\infty$. The following proposition gives the density in a neighborhood of zero.

\begin{prop}\label{prop:blow-up-density}
Let $d:[0,1]\to [0,\infty)$ be a continuous function satisfying $d(0)=0$ and
$d(x)>0$ for $x>0$. Define a variance profile by $\sigma_{ij}^2 = d( i/n)
d(j/n)$ for $i,j\in [n]$ and denote by $\rho_{\infty} =\int_0^1 d^2(x)\, dx$.
Let $f_{\infty}(z)$ be the density defined for $|z|\in (0,\sqrt{\rho_\infty})$.
Then
$$
f_{\infty} (z) \ \sim\  
\frac 1\pi \int_0^1 \frac{d^2(x)}{[|z|^2 + d^2(x) u(|z|)]^2} dx
 \quad \textrm{as}\quad z\to 0\ .
$$
\end{prop}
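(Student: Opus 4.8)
The plan is to start from the closed form for $f_\infty$ furnished by Theorem~\ref{th:separable-sampled}, specialized to $\widetilde d=d$:
\[
f_\infty(z)=\frac1\pi\left(\int_0^1\frac{d^2(x)}{(|z|^2+d^2(x)u(|z|))^2}\,dx\right)\left(\int_0^1\frac{d^4(x)}{(|z|^2+d^2(x)u(|z|))^2}\,dx\right)^{-1},
\]
where $u=u_\infty$ solves $\int_0^1 d^2(x)\,(s^2+d^2(x)u(s))^{-1}\,dx=1$ and, by the same theorem, $u(s)\to1$ as $s\downarrow0$. Since the first integral is strictly positive, the asserted equivalence is exactly the statement that the second integral,
\[
D(s):=\int_0^1\frac{d^4(x)}{(s^2+d^2(x)u(s))^2}\,dx,
\]
tends to $1$ as $s\downarrow0$; this is what I would prove.

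For that I would argue purely algebraically from the master equation. From the pointwise identity $\frac{d^2(x)u(s)}{s^2+d^2(x)u(s)}=1-\frac{s^2}{s^2+d^2(x)u(s)}$ together with $\int_0^1 d^2(x)(s^2+d^2(x)u(s))^{-1}\,dx=1$ one gets the basic relation
\[
s^2\int_0^1\frac{dx}{s^2+d^2(x)u(s)}=1-u(s).
\]
Squaring the pointwise identity and integrating, then substituting the basic relation, yields
\[
u^2(s)\,D(s)=1-2\bigl(1-u(s)\bigr)+s^4\int_0^1\frac{dx}{(s^2+d^2(x)u(s))^2}.
\]
The remainder integral is nonnegative, and since $\frac{s^2}{s^2+d^2(x)u(s)}\le1$ it is bounded by $s^4\int_0^1(s^2+d^2(x)u(s))^{-2}\,dx\le s^2\int_0^1(s^2+d^2(x)u(s))^{-1}\,dx=1-u(s)$. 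Hence $u^2(s)D(s)=1+O(1-u(s))$, and letting $s\downarrow0$ (so that $u(s)\to1$ by Theorem~\ref{th:separable-sampled}) gives $D(s)\to1$, which completes the proof.

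The argument has no genuinely hard step: the only analytic input is $u(s)\to1$, already available, and everything else is elementary bookkeeping. The one point requiring care is that $\int_0^1(s^2+d^2(x)u(s))^{-2}\,dx$ typically diverges as $s\downarrow0$ (precisely when $\int_0^1 d^{-4}(x)\,dx=\infty$), so it must be absorbed through the crude $s^4$-comparison above rather than by passing to the limit inside the integral. I would also note, to place the statement in context, that when $\int_0^1 d^{-2}(x)\,dx=\infty$ Fatou's lemma applied to the surviving numerator shows $\int_0^1 d^2(x)(|z|^2+d^2(x)u(|z|))^{-2}\,dx\to\infty$ as $z\to0$, so the proposition indeed exhibits a genuine divergence of the density at the origin.
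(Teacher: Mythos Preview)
Your argument is correct. The paper itself omits the proof, saying only that it ``can be proved as Theorems~\ref{th:separable} and~\ref{th:separable-sampled}'', so there is no detailed proof to compare against. Your route is in fact cleaner than re-running those arguments: you take the density formula from Theorem~\ref{th:separable-sampled}(3) as a black box and reduce the asymptotic equivalence to the single statement $D(s)\to 1$, which you then extract directly from the master equation by the squaring trick. The key observation---that the potentially divergent remainder $s^4\int_0^1(s^2+d^2(x)u(s))^{-2}\,dx$ is controlled by $1-u(s)$ via the crude bound $s^2/(s^2+d^2(x)u(s))\le 1$---is exactly the right move, and avoids any delicate limit interchange. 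Nothing is missing.
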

Proposition \ref{prop:blow-up-density} whose proof is omitted can be proved as Theorems \ref{th:separable} and \ref{th:separable-sampled}.

We denote $u(s)\sim v(s)$ as $s\to0$ if $\lim_{s\to 0} \frac{u(s)}{v(s)}=1$. Then applying Proposition \ref{prop:blow-up-density} to specific functions $d(\cdot)$ yields the following examples:
\begin{example}[Unbounded and bounded densities near $z=0$] 

\begin{enumerate}
\item Let $d(x)=x$ then 
$$
\int_0^1 \frac{x^2}{[s^2 + x^2 u(s)]^2} dx\sim \frac \pi{4s}\quad 
 \textrm{hence}\quad 
 f_{\infty}(z) \sim \frac 1{4|z|}\quad \textrm{as} \quad z\to 0\, .
$$
\item Let $d(x)=\sqrt{x}$ then 
$$
\int_0^1 \frac{x}{[s^2 + x u(s)]^2} dx\sim -2\log(s)
 \quad \textrm{hence}\quad 
 f_{\infty}(z) \sim -\frac{2\log(|z|)}{\pi}\quad \textrm{as} \quad z\to 0\, .
$$
\item Let $d(x)=x^a$ with $a\in (0,\frac 12)$, then 
 $f_{\infty}(0)= \frac 1{\pi(1-2a)}$.
\end{enumerate}
\end{example}

\section{Sampled variance profile}
\label{sec:sampled} 

\subsection{Sampled variance profile} 
Here, we are interested in the case where 
$$
\sigma^2_{ij}(n) = \sigma^2\left( \frac in,\frac jn \right)\ ,
$$
where $\sigma$ is a continuous nonnegative function on $[0,1]^2$. In this situation, the deterministic equivalents will converge to a genuine limit as $n \to \infty$. Notice that \ref{ass:sigmax} holds and denote by
$$
\smax=\max_{x,y\in [0,1]} \sigma(x,y)\quad \textrm{and}\quad \smin=\min_{x,y\in [0,1]} \sigma(x,y)\,\,.
$$ 
For the sake of simplicity, we will restrict ourselves to the case where
$\sigma$ takes its values in $(0,\infty)$, i.e.\ where $\smin>0$, which implies that
\ref{ass:sigmin} holds.

We will use some results from the Krein--Rutman theory (see for instance \cite{deim-livre85}), which generalizes the
spectral properties of nonnegative matrices to positive operators on
Banach spaces.  To the function $\sigma^2$ we associate the linear operator
$\bs V$, defined on the Banach space $C([0,1])$ of continuous
real-valued functions on $[0,1]$ as
\begin{equation}
\label{pos-op} 
(\bs Vf)(x) = \int_0^1 \sigma^2(x,y) f(y) \, dy .
\end{equation} 
By the uniform continuity of $\sigma^2$ on $[0,1]^2$ and the Arzela--Ascoli
theorem, it is a standard fact that this operator is compact
\cite[Ch.~VI.5]{ree-sim-livre-1}. Let $C^+([0,1])$ be the convex cone of 
nonnegative elements of $C([0,1])$:
$$
C^+([0,1])=\left\{ 
f\in C([0,1])\, , \ f(x)\ge 0\quad \textrm{for}\ x\in [0,1]\right\} \; .
$$
Since 
$
\smin>0$,
the operator $\bs V$ is strongly positive, i.e.\ it sends any element of
$C^+([0,1])\setminus \{0\}$ to the interior of $C^+([0,1])$, the set of continuous and positive functions on $[0,1]$. Under these
conditions, it is well-known that the spectral radius $\rho({\bs V})$ of 
$\bs V$ is non zero, and it coincides with the so-called Krein--Rutman
eigenvalue of $\bs V$ \cite[Theorem~19.2 and 19.3]{deim-livre85}. 

To be consistent with our notation for nonnegative finite dimensional vectors,
we write $f \posneq 0$ when $f \in C^+([0,1]) \setminus \{ 0 \}$,
and $f \succ 0$ when $f(x)>0$ for all $x\in [0,1]$.

\begin{theo}[Sampled variance profile]
\label{th:sampled} 
Assume that there exists a continuous function 
$\sigma:[0,1]^2\to (0,\infty)$ such that 
$$
\sigma_{ij}^{(n)} =\sigma\left(\frac in,\frac jn\right) .
$$
Let $(Y_n)_{n\ge1}$ be a sequence of random matrices as in Definition \ref{def:model} and assume that \ref{ass:moments} holds. 
Then, 
\begin{enumerate}

\item\label{cvg-spradius}
The spectral radius $\rho(V_n)$ of the matrix
$V_n = n^{-1}(\sigma_{ij}^2)$ converges to
$\rho({\bs V})$ as $n\to\infty$, where $\bs V$ is the operator on $C([0,1])$
defined by \eqref{pos-op}.  
\item\label{limit-eqs}  
Given $s > 0$, consider the system of equations:  
\begin{equation}
\label{eq:sys-infty} 
\left\{ 
\begin{split} 
&Q_\infty(x,s) = \frac{\int_0^1 \sigma^2(y,x) Q_\infty(y,s)\, dy}
{s^2 +\int_0^1 \sigma^2(y,x) Q_\infty(y,s)\, dy
    \int_0^1 \sigma^2(x,y) \widetilde Q_\infty(y,s)\, dy}\ ,\\ 
&\widetilde Q_\infty(x,s) = 
\frac{\int_0^1 \sigma^2(x,y) \widetilde Q_\infty(y,s)\, dy}
{s^2 +\int_0^1 \sigma^2(y,x) Q_\infty(y,s)\, dy
  \int_0^1 \sigma^2(x,y) \widetilde Q_\infty(y,s)\, dy} \, , \\ 
&\int_0^1 Q_\infty(y,s)\, dy = \int_0^1 \widetilde Q_\infty(y,s)\, dy  .
\end{split} 
\right. 
\end{equation} 
with unknown parameters  $Q_\infty(\cdot, s), \widetilde Q_\infty(\cdot, s) \in C^+([0,1])$. 
Then, 
\begin{enumerate}
\item for $s\geq \sqrt{\rho(\bs V)}$, 
$Q_\infty(\cdot,s) = \widetilde Q_\infty(\cdot,s) = 0$ is the unique
solution of this system. 
\item for $s \in (0, \sqrt{\rho(\bs V)})$, the system has a unique solution 
$Q_\infty(\cdot,s) + \widetilde Q_\infty(\cdot,s) \posneq 0$. This solution
satisfies 
$$
Q_\infty(\cdot,s),\widetilde Q_\infty(\cdot,s)\, \succ 0\ .
$$ 
\item The functions 
$
Q_\infty,\widetilde Q_\infty\,:\,[0,1]\times (0, \infty) \ \longrightarrow \  [0,\infty)
$ 
are continuous, and continuously extended to 
$[0,1]\times [0, \infty)$, with 
$$Q_\infty(\cdot,0)\,,\, \widetilde Q_\infty(\cdot,0) \succ 0\,.
$$ 
\end{enumerate}
\item\label{Finfty} 
The function 
$$
F_\infty(s) = 
1 -\int_{[0,1]^2} Q_\infty(x,s) \, \widetilde Q_\infty(y,s) \, \sigma^2(x,y)
  \, dx\, dy \ , \quad s \in (0,\infty)  
$$
converges to zero as $s\downarrow 0$. Setting $F_\infty(0)=0$, the 
function $F_\infty$ is an absolutely continuous function on $[0,\infty)$ which 
is the CDF of a probability measure whose support is contained in 
$[0, \sqrt{\rho(\bs V)}]$, and whose density is continuous on 
$[0, \sqrt{\rho(\bs V)}]$. 
\item\label{cvg-muinfty} 
Let $\mu_\infty$ be the rotationally invariant probability measure on 
$\C$ defined by the equation 
$$
\mu_\infty( \{ z \, : \, 0 \leq |z| \leq s \}) 
 = F_\infty(s), \quad s \ge 0\, . 
$$
Then, 
$$
\mu^Y_n \xrightarrow[n\to \infty]{w} \mu_\infty \quad \text{in probability}\ .
$$
\end{enumerate}
\end{theo}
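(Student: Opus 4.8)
The plan is to deduce the continuous statements from their finite-$n$ counterparts (Theorems \ref{thm:master} and \ref{thm:main}) by an Arzel\`a--Ascoli compactness argument in $C([0,1])$, in the same spirit as the proof of Theorem \ref{th:separable-sampled}. Throughout we use that, since $\smin>0$, the profiles $(V_n)$ satisfy \ref{ass:sigmin} (hence \ref{ass:admissible}) and the operator $\bs V$ is strongly positive with Krein--Rutman eigenvalue $\rho(\bs V)>0$. For part \eqref{cvg-spradius}, note first that the Perron eigenvector $v^{(n)}\succ 0$ of $V_n$ and the Krein--Rutman eigenfunction $g\succ 0$ of $\bs V$ have uniformly bounded oscillation, namely $\max_i v^{(n)}_i\le(\smax^2/\smin^2)\min_i v^{(n)}_i$ and similarly for $g$, because each value is a $\sigma^2$-weighted average of the others. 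Feeding the sampled eigenfunction $(g(i/n))_{i\in[n]}$ into the Collatz--Wielandt formula $\rho(V_n)=\max_{w\succ0}\min_i(V_nw)_i/w_i$ and using the uniform continuity of $\sigma^2$ on $[0,1]^2$ gives $\rho(V_n)\ge\rho(\bs V)-o(1)$; feeding the piecewise-constant interpolation of $v^{(n)}$ into the analogous variational formula for $\bs V$ gives the reverse bound. This proves \eqref{cvg-spradius}.

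For part \eqref{limit-eqs}, fix $s,t>0$, let $\rvec_n(s,t)=(\br_n,\brt_n)$ be the solution of the Regularized Master Equations \eqref{def:MEt} for $V_n$ given by Proposition \ref{prop:MEt}, and let $R_n(\cdot,s,t),\widetilde R_n(\cdot,s,t)\in C^+([0,1])$ be its piecewise-linear interpolation on the grid $\{i/n\colon i\in[n]\}$. Using \ref{ass:sigmin} and \ref{ass:admissible} the family $\{(R_n,\widetilde R_n)\}_n$ is uniformly bounded, and comparing the fixed-point identity \eqref{def:MEt} at two grid points while invoking the Lipschitz continuity of $\sigma^2$ shows it is equicontinuous, uniformly for $(s,t)$ in compacts of $(0,\infty)^2$. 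By Arzel\`a--Ascoli, along a subsequence $(R_n,\widetilde R_n)\to(R_\infty,\widetilde R_\infty)$ in $C([0,1])^2$, and passing to the limit in a Riemann sum turns \eqref{def:MEt} into
\begin{equation*}
R_\infty(x,s,t)=\frac{\int_0^1\sigma^2(y,x)R_\infty(y)\,dy+t}{s^2+\bigl(\int_0^1\sigma^2(y,x)R_\infty(y)\,dy+t\bigr)\bigl(\int_0^1\sigma^2(x,y)\widetilde R_\infty(y)\,dy+t\bigr)}
\end{equation*}
together with its mirror for $\widetilde R_\infty$ and the identity $\int_0^1R_\infty=\int_0^1\widetilde R_\infty$. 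Uniqueness of the solution of this continuous regularized system is obtained by the monotone-iteration/contraction argument used for Proposition \ref{prop:MEt}, the strong positivity of $\bs V$ playing the role of the irreducibility of $V_n$; hence the full sequence converges. Letting $t\downarrow0$ and arguing as in part \eqref{q=limr} of Theorem \ref{thm:master} yields $Q_\infty(\cdot,s)=\lim_{t\downarrow0}R_\infty(\cdot,s,t)$ and $\widetilde Q_\infty$, which solve \eqref{eq:sys-infty}; the normalization and the trichotomy in $s$ (trivial solution for $s\ge\sqrt{\rho(\bs V)}$, strictly positive for $s<\sqrt{\rho(\bs V)}$) pass to the limit as in Theorem \ref{thm:master}, parts \eqref{q:sys1}--\eqref{q:sys2}, again using strong positivity. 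Continuity of $Q_\infty,\widetilde Q_\infty$ in $s$ and their continuous extension to $s=0$ with $Q_\infty(\cdot,0),\widetilde Q_\infty(\cdot,0)\succ0$ follow by pushing the equicontinuity estimate down to a neighbourhood of $s=0$, exactly as in the proof of Theorem \ref{th:separable-sampled}.

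For part \eqref{Finfty}, set $F_\infty(s)=1-\int_{[0,1]^2}Q_\infty(x,s)\widetilde Q_\infty(y,s)\sigma^2(x,y)\,dx\,dy$. The convergence of the interpolated solutions from the previous step, together with the $t\downarrow0$ limit, yields $F_n(s)\to F_\infty(s)$ for all $s>0$, where $F_n$ is as in \eqref{expF}. Since each $F_n$ is by Theorem \ref{thm:main} the CDF of a probability measure supported in $[0,\sqrt{\rho(V_n)}]$ and $\sqrt{\rho(V_n)}\to\sqrt{\rho(\bs V)}$, the limit $F_\infty$ is nondecreasing, valued in $[0,1]$, and equal to $1$ on $[\sqrt{\rho(\bs V)},\infty)$. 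That $F_\infty(s)\downarrow0$ as $s\downarrow0$, i.e.\ $\int_{[0,1]^2}Q_\infty\widetilde Q_\infty\sigma^2\to1$, follows from the $s=0$ form of \eqref{eq:sys-infty} exactly as the no-atom statement is proved in the separable case; so $F_\infty$ is continuous at $0$. Differentiating via the continuous analogue of the discrete-derivative identity \eqref{eq:discrete-derivative} shows $F_\infty\in C^1(0,\sqrt{\rho(\bs V)})$ with a density extending continuously to $[0,\sqrt{\rho(\bs V)}]$; hence $F_\infty$ is absolutely continuous, proving \eqref{Finfty}. Finally, for part \eqref{cvg-muinfty}: by Theorem \ref{thm:main} (applicable since \ref{ass:moments}, \ref{ass:sigmax} and \ref{ass:admissible} hold) we have $\mu_n^Y\sim\mu_n$ in probability, and since $F_n\to F_\infty$ pointwise with $F_\infty$ continuous, the deterministic measures $\mu_n$ (radial CDF $F_n$) converge weakly to $\mu_\infty$ (radial CDF $F_\infty$); combining the two, e.g.\ in the bounded-Lipschitz metric, gives $\mu_n^Y\to\mu_\infty$ weakly in probability. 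The main obstacle is the second step: establishing the equicontinuity of the interpolated regularized solutions uniformly down to $s=0$ and $t=0$, and the uniqueness of the limiting continuous system \eqref{eq:sys-infty}, which are precisely what legitimizes the limit transition and identifies the limit; the remaining parts are essentially bookkeeping on top of the finite-$n$ results and the Krein--Rutman theory already invoked for $\bs V$.
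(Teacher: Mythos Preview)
Your overall strategy --- Arzel\`a--Ascoli compactness on interpolants of the finite-$n$ solutions and passage to the limit --- is the same as the paper's. There are two genuine technical differences worth noting. For part~\eqref{cvg-spradius} you use a Collatz--Wielandt two-way comparison; the paper instead embeds $V_n$ as a finite-rank operator $\bs V_n$ on $C([0,1])$, observes that $\{\bs V_n\}$ is \emph{collectively compact} and converges strongly to $\bs V$, and invokes the Anselone--Palmer theory, which gives spectral convergence directly. Your argument is more elementary and self-contained; the paper's pays off later (see below). For part~\eqref{limit-eqs} you route through the regularized system $\rvec_n(s,t)$ and take $t\downarrow 0$ after the $n\to\infty$ limit, whereas the paper works directly with $\qvec^n(s)$, using the bound $\|\q^n(s)\|_\infty\le \smax^2/(\smin s^2)$ (available since $\smin>0$) to get equicontinuity of the auxiliary functions $\Phi_n,\widetilde\Phi_n$. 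Either route is viable; yours has the mild cost of an extra limit interchange.

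The place where your sketch is genuinely thin is part~\eqref{Finfty}, specifically the differentiability of $F_\infty$ and the continuity of the density up to the endpoints. You appeal to a ``continuous analogue of the discrete-derivative identity~\eqref{eq:discrete-derivative}'', but that identity is scalar and works in the separable case precisely because the system collapses to one equation in one unknown. Here the derivative $\partial_{s^2}(Q_\infty,\widetilde Q_\infty)$ satisfies a linear equation $(I-M_\infty^s)\,\partial_{s^2}\vec Q_\infty = a_\infty^s$ where $M_\infty^s$ is an integral operator on $C([0,1];\R^2)$ with $1$ in its spectrum (eigenvector $(Q_\infty,-\widetilde Q_\infty)$), so the inverse does not exist without a further constraint. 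The paper handles this by adding the rank-one projector coming from the normalization $\int Q_\infty=\int\widetilde Q_\infty$, forming $I-G_\infty^s=(I-M_\infty^s)^*(I-M_\infty^s)+vv^\tran$, and then --- crucially --- using the collectively compact framework again to show that $(I-\bs G_\infty^{s,s'})^{-1}\to(I-G_\infty^s)^{-1}$ strongly as $s'\to s$, which is what gives existence and continuity of the derivative. This step does not reduce to an elementary difference-quotient estimate, and it is where the Anselone--Palmer machinery introduced for part~\eqref{cvg-spradius} is reused. Your proposal should either import that machinery or supply an alternative argument for the invertibility and continuity of the relevant operator; as written, this is the main gap.

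A smaller point: for uniqueness in \eqref{limit-eqs}(a)--(b) you write that the trichotomy ``passes to the limit as in Theorem~\ref{thm:master}''. That phrasing is ambiguous --- uniqueness at the discrete level does not by itself preclude extra solutions of the continuous system. What is needed (and what the paper does) is to rerun the Perron--Frobenius argument of \cite[Lemma~4.3]{cook2018non} directly on the continuous system, replacing Perron--Frobenius by Krein--Rutman and irreducibility by strong positivity of $\bs V$. You hint at this (``again using strong positivity''), so this is more a matter of clarity than a missing idea.
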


The proof of Theorem \ref{th:sampled} is an adaptation of the proofs of Lemmas 4.3 and 4.4 from \cite{cook2018non} to the context of Krein--Rutman's 
theory for positive operators in Banach spaces.
\subsection{Proof of Theorem~\ref{th:sampled}}
\label{proof:th:sampled}
Extending the maximum norm notation from vectors to functions, we also denote by $\|f\|_\infty=\sup_{x\in[0,1]} |f(x)|$ the norm on the Banach space $C([0,1])$.
Given a positive integer $n$, the linear operator $\bs V_n$ defined on $C([0,1])$ as
\[
\bs V_n f(x) =  \frac 1n \sum_{j=1}^n \sigma^2(x,j/n) \, f(j/n)  
\]
is a finite rank operator whose eigenvalues coincide with those of the matrix
$V_n$. It is easy to check that $\bs V_n f \to \bs V f$ in $C([0,1])$ for all
$f\in C([0,1])$, in other words, $\bs V_n$ converges strongly to $\bs V$ in
$C([0,1])$, denoted by 
$$
\bs V_n \xrightarrow[n\to\infty]{str} \bs V
$$  
in the sequel. 
However, $\bs V_n$ does not converge to $\bs V$ in norm, in which
case the convergence of $\rho(\bs V_n)$ to $\rho({\bs V})$ would have been
immediate.  Nonetheless, the family of operators $\{ \bs V_n \}$ satisfies the
property that the set 
$\{ \bs V_n f \, : \, n \ge 1,\, \| f \|_{\infty} \leq 1 \}$ has a compact
closure, being a set of equicontinuous and bounded functions thanks to the
uniform continuity of $\sigma^2$ on $[0,1]^2$.  Following \cite{ans-pal-68},
such a family is named \emph{collectively compact}.  

We recall the following important properties, cf.\ \cite{ans-pal-68}.
If a sequence $(\bs T_n)$ of collectively compact operators on a Banach space
converges strongly to a bounded operator $\bs T$, then:
\begin{itemize}
\item[i)] The spectrum of $\bs T_n$ is eventually contained in any 
neighborhood of the spectrum of $\bs T$. Furthermore, 
$\lambda$ belongs to the spectrum of $\bs T$ if and only if there exist
$\lambda_n$ in the spectrum of $\bs T_n$ such that $\lambda_n \to \lambda$; 
\item[ii)] $(\lambda - \bs T_n)^{-1} \xrightarrow[n\to\infty]{str}  (\lambda - \bs T)^{-1}$ for 
 any $\lambda$ in the resolvent set of $\bs T$. 
\end{itemize}
The statement~\eqref{cvg-spradius} of the theorem follows from \emph{i)}. 
We now provide the main steps of the proof of the statement \eqref{limit-eqs}. 
Given $n\geq 1$ and $s > 0$, let 
$(\q^n(s)^\tran\; \qtilde^n(s)^\tran)^\tran \in \R^{2n}$ be the solution of
the system~\eqref{def:ME} that is specified by Theorem~\ref{thm:master}. Denote by $\q^n(s)= (q^n_1(s),\ldots, q^n_n(s))$ and $\qtilde^n = (\widetilde q^n_1,\ldots, \widetilde q^n_n)$
and introduce the quantities 
\begin{equation}\label{def:Phi-and-tilde}
\Phi_n(x,s) = \frac 1n \sum_{i=1}^n \sigma^2\left(x, \frac in\right) \, \widetilde q^n_i(s)
 \quad \textrm{and}\quad  \widetilde\Phi_n(x,s) = \frac 1n \sum_{i=1}^n \sigma^2\left(\frac in, x\right) \, q^n_i(s) \ .
\end{equation}
By Proposition~2.5 of \cite{cook2018non} (recall that \ref{ass:sigmin} holds), we know that the average 
$$
\langle \q^n(s)\rangle_n = \frac 1n \sum_{i=1}^n q_i^n(s)$$ satisfies
$\langle \q^n(s)\rangle_n \leq \smin^{-1}$.
Therefore, we get from~\eqref{def:ME} that
\begin{equation}\label{eq:max-bound}
\| \q^n(s) \|_\infty\ \leq\ \frac{\smax^2 \langle \q^n(s) \rangle_n}{s^2}\ \leq\ \frac{\smax^2}{\smin s^2}\,\,.
\end{equation}
Consequently the family $\{ \widetilde\Phi_n(\cdot, s) \}_{n\geq 1}$ is an equicontinuous and bounded subset of $C([0,1])$. Similarly, an
identical conclusion holds for the family 
$\{ \Phi_n(\cdot, s) \}_{n\geq 1}$.
By Arzela--Ascoli's theorem, there exists a subsequence (still denoted by $(n)$, with a small abuse of notation) along which
$\widetilde\Phi_n(\cdot, s)$ and $\Phi_n(\cdot, s)$ respectively converge to
given functions 
$\widetilde\Phi_\infty(\cdot, s)$ and $\Phi_\infty(\cdot, s)$ in $C([0,1])$.  Denote 
$$
\Psi_n(x,s) = \frac 1{s^2 + \Phi_n(x,s)\widetilde\Phi_n(x,s)}\quad \textrm{and}\quad 
\Psi_\infty(x,s) = \frac 1{s^2 + \Phi_\infty(x,s)\widetilde\Phi_\infty(x,s)}\ .
$$
and introduce the auxiliary quantities
$$
Q_n(x,s) = \Psi_n(x,s) \widetilde\Phi_n(x,s)\qquad \textrm{and}\qquad \widetilde Q_n(x,s) = \Psi_n(x,s) \Phi_n(x,s)\,.
$$ 
Then there exists $Q_\infty(x,s)$ and $\widetilde Q_\infty(x,s)$ such that 
$Q_n(\cdot,s) \to Q_\infty(\cdot, s)$ and 
$\widetilde Q_n(\cdot,s) \to \widetilde Q_\infty(\cdot, s)$ in 
$C([0,1])$. These limits satisfy 
$$
Q_\infty(x,s)\ =\  \frac{\widetilde \Phi_\infty(x,s)}{s^2 + \Phi_\infty(x,s)\widetilde \Phi_\infty(x,s)}\qquad \textrm{and}\qquad 
\widetilde Q_\infty(x,s)\ =\  \frac{\widetilde \Phi_\infty(x,s)}{s^2 + \Phi_\infty(x,s)\widetilde \Phi_\infty(x,s)}\ .
$$
Moreover, the mere definition of $\q^n$ and $\qtilde^n$ as solutions of \eqref{def:ME} yields that 
\begin{equation}\label{eq:property-Q}
\begin{cases}
Q_n\left(\frac in,s\right) = q_i^n(s) & 1\le i\le n\\
\widetilde Q_n\left( \frac in,s\right) = \widetilde q_i^n(s) & 1\le i\le n.
\end{cases}
\end{equation}
Combining \eqref{def:Phi-and-tilde}, \eqref{eq:property-Q} and the convergence of $Q_n$ and $\widetilde Q_n$, we finally obtain the useful representation
\begin{equation}\label{eq:useful-representation}
\Phi_\infty(x,s)  = \int_0^1 \sigma^2(x,y) \, \widetilde Q_\infty(y,s) \, dy \qquad \textrm{and}
\qquad 
\widetilde \Phi_\infty(x,s)  = \int_0^1 \sigma^2(y,x) \, Q_\infty(y,s) \, dy \ .
\end{equation}
which yields that $Q_\infty$ and $\widetilde Q_\infty$ satisfy the system \eqref{eq:sys-infty}.

To establish the first part of the statement~\eqref{limit-eqs}, we show that
these limits are zero if $s^2 \geq \rho(\bs V)$ and positive if 
$s^2 < \rho(\bs V)$, then we show that they are unique.  It is known that
$\rho(\bs V)$ is a simple eigenvalue, it has a positive eigenvector, and there
is no other eigenvalue with a positive eigenvector. If $\bs T$ is a bounded
operator on $C([0,1])$ such that $\bs T f - \bs V f \succ 0$ for $f \posneq 0$,
then $\rho(\bs T) > \rho(\bs V)$ \cite[Theorem~19.2 and 19.3]{deim-livre85}. 

We first establish \eqref{limit-eqs}-(a). Fix $s^2 \geq \rho(\bs V)$, and assume that $Q_\infty(\cdot, s) \posneq 0$. 
Since $Q_\infty(\cdot, s) = \Psi_\infty \bs V Q_\infty(\cdot, s)$, where
$\Psi_\infty(\cdot, s)$ is the limit of $\Psi_n(\cdot, s)$ along the 
subsequence $(n)$, it holds that $Q_\infty(\cdot, s) \succ 0$, and by the 
properties of the Krein--Rutman eigenvalue, that $\rho(\Psi_\infty \bs V) = 1$. 
From the identity 
$\int Q_\infty(x, s) \, dx  = \int \widetilde Q_\infty(x, s) \, dx$, 
we get that $\widetilde Q_\infty(\cdot, s) \posneq 0$, hence 
$\widetilde Q_\infty(\cdot, s) \succ 0$ by the same argument. 
By consequence, $s^{-2} \bs V f-\Psi_\infty \bs V f \succ 0$ for all 
$f \posneq 0$. 
This leads to the contradiction 
$1 \geq \rho(s^{-2} \bs V) > \rho(\Psi_\infty \bs V) = 1$. 
Thus, $Q_\infty(\cdot, s) = \widetilde Q_\infty(\cdot, s) = 0$.

We now establish \eqref{limit-eqs}-(b). Let $s^2 < \rho(\bs V)$. By an argument based on collective compactness, it holds that 
$$
\rho(\Psi_n \bs V_n) \xrightarrow[n\to\infty]{} \rho(\Psi_\infty \bs V)\ 
$$
and moreover, that 
$\rho(\Psi_n \bs V_n) = 1$ (see e.g.\ the proof of 
Lemma~4.3 of \cite{cook2018non}). Thus, $Q_\infty(\cdot, s) \posneq 0$ and 
$\widetilde Q_\infty(\cdot, s) \posneq 0$, otherwise 
$\rho(\Psi_\infty \bs V) = \rho(s^{-2} \bs V) > 1$. Since
$Q_\infty(\cdot, s) = \Psi_\infty \bs V Q_\infty(\cdot, s)$, we get that 
$Q_\infty(\cdot, s) \succ 0$ and similarly, that 
$\widetilde Q_\infty(\cdot, s) \succ 0$. 

It remains to show that the accumulation point 
$(Q_\infty, \widetilde Q_\infty)$ is unique. The proof of this fact is similar
to its finite dimensional analogue in the proof of Lemma 4.3 from \cite{cook2018non}. In
particular, the properties of the Perron--Frobenius eigenvalue and its
eigenspace are replaced with their Krein--Rutman counterparts, and the matrices
$K_{\vec{\bs q}}$ and $K_{\vec{\bs q},\vec{\bs q}'}$ in that proof are replaced
with continuous and strongly positive integral operators. Note that the end of
the proof is simpler in our context, thanks to the strong positivity assumption
instead of the irreducibility assumption. We leave the details to the reader.

We now address \eqref{limit-eqs}-(c) and first prove the continuity of $Q_\infty$ and $\widetilde Q_\infty$
on $[0,1] \times (0,\infty)$. This is equivalent to proving the continuity
of $\Phi_\infty$ and $\widetilde\Phi_\infty$ on this set. 
Let $(x_k, s_k) \to_k (x,s) \in [0,1] \times (0,\infty)$. The bound
$$
0\ \le\ \widetilde Q_\infty(y,s)\  \le\ \frac{\smax^2}{\smin\, s^2}
$$
follows from \eqref{eq:property-Q} and the convergence of $\widetilde Q_n$ to $\widetilde Q_\infty$.
As a consequence of \eqref{eq:useful-representation}, the
family $\{ \Phi_\infty(\cdot , s_k) \}_k$ is equicontinuous for $k$ large. 
By Arzela--Ascoli's theorem and the uniqueness of the solution of the system, we get that 
$\Phi_\infty(\cdot , s_k) \to_k \Phi_\infty(\cdot , s)$ in $C([0,1])$. 
Therefore, writing 
\[
| \Phi_\infty(x_k , s_k) - \Phi_\infty(x , s)  |
\leq 
\| \Phi_\infty(\cdot , s_k) - \Phi_\infty(\cdot , s)  \|_\infty + 
| \Phi_\infty(x_k , s) - \Phi_\infty(x , s)  |  
\]
and using the continuity of $\Phi_\infty(\cdot, s)$, we get that 
$\Phi_\infty(x_k , s_k) \to_k \Phi_\infty(x , s)$.

The main steps of the proof for extending the continuity of $Q_\infty$ and 
$\widetilde Q_\infty$ from $[0,1] \times (0,\infty)$ to 
$[0,1] \times [0,\infty)$ are the following. 
Following the proof of Proposition \ref{prop:LB}, we can establish that
$$
\lim\inf_{s\downarrow 0} \int_0^1 Q_\infty(x,s)\, dx \ >\  0\ .
$$ 
The details are omitted. Since 
\[
\frac{1}{\widetilde Q_\infty(x,s)} = \frac{s^2}{\Phi_\infty(x,s)} 
+ \widetilde\Phi_\infty(x,s) > 
\smin \int_0^1 Q_\infty(y,s)\, dy \, , 
\]
we obtain that $\| \widetilde Q_\infty(\cdot, s) \|_\infty$ is bounded when 
$s \in (0,\varepsilon)$ for some $\varepsilon > 0$. 
Thus, $\{ \Phi_\infty(\cdot, s) \}_{s\in (0,\varepsilon)}$ is equicontinuous by \eqref{eq:useful-representation}, 
and it remains to prove that the accumulation point $\Phi_\infty(\cdot, 0)$ is
unique. 

This can be done by working on the system 
\eqref{eq:sys-infty} for $s=0$, along the lines of the proof of
Lemma~4.3 of \cite{cook2018non} and Proposition \ref{prop:LB}. Details are omitted.

Turning to Statement~\eqref{Finfty}, the assertion $F(s) \to 0$ as 
$s\downarrow 0$ can be deduced from the proof of Proposition \ref{prop:LB} and a
passage to the limit, noting that the bounds in that proof are independent from
$n$.

Consider the Banach space $\mathcal B = C([0,1];\R^2)$ of continuous functions
$$
\vec f=( f, \widetilde f)^\tran: [0,1] \longrightarrow \R^2
$$
endowed with
the norm $\| \vec f \|_{\mathcal B} = \sup_{x\in[0,1]}  
\max ( | f(x) |, |\widetilde f(x)| )$. 
In the remainder of the proof, we may use the notation shortcut $\Psi_{\infty}^s$ instead of $\Psi_{\infty}(\cdot,s)$ and corresponding shortcuts for quantities $\Phi_\infty(\cdot,s)$, $\widetilde \Phi_{\infty}(\cdot,s)$, $Q_\infty(\cdot,s)$ and $\widetilde Q_\infty(\cdot,s)$.

Given 
$s, s' \in (0, \sqrt{\rho(\bs V)})$ with $s\neq s'$, consider the function 
\[
\Delta \vec Q_\infty^{s,s'} = 
\frac{\bigl( Q^s_\infty - Q^{s'}_\infty , 
 \widetilde Q^s_\infty -  \widetilde Q^{s'}_\infty \bigr)^{\tran}} 
 {s^2 - s'^{\,2}} \in \mathcal B. 
\]
Let $\bs{ V}^\tran$ be the linear operator associated to the kernel $(x,y)\mapsto \sigma^2(y,x)$, and defined as
$$
\bs{V}^\tran f(x) =\int_0^1 \sigma^2(y,x)f(y)\, dy\ .
$$
Then, mimicking the proof of Lemma~4.4 of \cite{cook2018non}, it is easy to prove that
$\Delta \vec Q_\infty^{s,s'}$ satisfies the equation 
\[
\Delta \vec Q_\infty^{s,s'} = \bs M_\infty^{s,s'} \Delta \vec Q_\infty^{s,s'} 
+ \bs a_\infty^{s,s'} , 
\] 
where $\bs M_\infty^{s,s'}$ is the operator acting on $\mathcal B$ and defined in a matrix 
form as 
\begin{equation*}
\bs M_\infty^{s,s'} = 
\begin{pmatrix} 
s^2 \Psi^s_\infty \Psi^{s'}_\infty \bs{  V}^\tran & 
       - \Psi^s_\infty \Psi^{s'}_\infty
    \widetilde\Phi^s_\infty \widetilde\Phi^{s'}_\infty \bs V  \\
- \Psi^s_\infty\Psi^{s'}_\infty
   \Phi^s_\infty\Phi^{s'}_\infty \bs{ V}^\tran
& s^2 \Psi^s_\infty \Psi^{s'}_\infty \bs V
\end{pmatrix} \, , \\ 
\end{equation*}
and $\bs a_\infty^{s,s'}$ is a function ${\mathcal B}$ defined as
\begin{gather*} 
\bs a_\infty^{s,s'} = 
- \begin{pmatrix} \Psi^s_\infty \Psi^{s'}_\infty \bs{ V}^\tran Q^s_\infty \\
\Psi^s_\infty \Psi^{s'}_\infty \bs V \widetilde Q^s_\infty
\end{pmatrix} \, . 
\end{gather*} 
To proceed, we rely on a regularized version of this
equation. 
Denoting by $\1$ the constant function $\bs 1(x) = 1$ in $C([0,1])$, 
and letting $v = ( \bs 1, - \bs 1)^\tran \in \mathcal B$, the kernel operator
$v v^\tran$ on $\mathcal B$ is defined by the matrix 
\[
(v v^\tran)(x,y) = \begin{pmatrix} 
\bs 1(x) \bs 1(y) & - \bs 1(x) \bs 1(y) \\ 
- \bs 1(x) \bs 1(y) & \bs 1(x) \bs 1(y) \end{pmatrix} .
\]
By the constraint 
$\int Q^s_\infty = \int \widetilde Q^s_\infty$, it holds
that $(v v^\tran) \Delta \vec Q_\infty^{s,s'} = 0$. Thus, 
$\Delta \vec Q_\infty^{s,s'}$ satisfies the identity 
\begin{equation}
\label{DQss} 
\bigl( ( I - (\bs M_\infty^{s,s'})^\tran ) 
( I - \bs M_\infty^{s,s'}) + v v^\tran \bigr) 
\Delta \vec Q_\infty^{s,s'} = 
( I - (\bs M_\infty^{s,s'})^\tran )\bs a_\infty^{s,s'} \, . 
\end{equation} 
We rewrite the left side of this identity as 
$(I - \bs G^{s,s'}_\infty) \Delta \vec Q_\infty^{s,s'}$ where  
\[
\bs G_\infty^{s,s'} =
 \bs M_\infty^{s,s'} + (\bs M_\infty^{s,s'})^\tran 
 - (\bs M_\infty^{s,s'})^\tran \bs M_\infty^{s,s'} - v v^\tran \, , 
\]
and we study the behavior of $\bs M_\infty^{s,s'}$ and $\bs G_\infty^{s,s'}$
as $s'\to s$. 

Let $s\in (0,\sqrt{\rho(\bs V)})$ and $s'$ belong to a small compact neighborhood $\mathcal K$
of $s$. Then the first component of $\bs M_\infty^{s,s'} \vec f(x)$ has
the form 
\[
\int \bigl( h_{11}(x,y,s') f(y) 
   + h_{12}(x,y,s') \widetilde f(y)\bigr) \, dy \, , 
\]
where $h_{11}$ and $h_{12}$ are continuous on the compact set 
$[0,1]^2 \times \mathcal K$ by the previous results. A similar argument holds for the other component of $\bs M_\infty^{s,s'} \vec f(x)$.
By the uniform continuity of these functions on this set, we get that 
the family $\{ \bs M_\infty^{s,s'} \vec f \, : \, s' \in \mathcal K, 
\, \| \vec f \|_{\mathcal B} \leq 1 \}$ is equicontinuous, and by the Arzela--Ascoli theorem,
the family $\{ \bs M_\infty^{s,s'} \, : \, s' \in \mathcal K \}$ is 
collectively compact. Moreover, 
\[
\bs M_\infty^{s,s'} \xrightarrow[s'\to s]{str} 
M_\infty^s = \begin{pmatrix} I &0 \\0 & -I \end{pmatrix} 
N_\infty^s
\begin{pmatrix} I & 0\\0 & -I \end{pmatrix} \, , 
\]
where
\[
N_\infty^{s} = 
\begin{pmatrix} 
s^2 \Psi^2_\infty(\cdot,s) \bs{V}^\tran & 
       \Psi^2_\infty(\cdot,s) \widetilde\Phi^2_\infty(\cdot,s) \bs V  \\
  \Psi^2_\infty(\cdot,s) \Phi^2_\infty(\cdot,s) \bs{V}^\tran
& s^2 \Psi^2_\infty(\cdot,s) \bs V
\end{pmatrix} \, .  
\]
By a similar argument, 
$\{ \bs G_\infty^{s,s'} \, : \, s' \in \mathcal K \}$ is collectively compact,
and $\bs G_\infty^{s,s'} \xrightarrow[s'\to s]{str} G^s_\infty$, where 
\[
G_\infty^s = 
 M_\infty^{s} + (M_\infty^{s})^\tran - (M_\infty^{s})^\tran M_\infty^{s} 
 - v v^\tran \, . 
\]
We now claim that $1$ belongs to the resolvent set of the compact operator
$G_\infty^s$. 

Repeating an argument of the proof of Lemma 4.4 from \cite{cook2018non}, we can
prove that the Krein--Rutman eigenvalue of the strongly positive operator
$N_\infty^{s}$ is equal to one, and its eigenspace is generated by the vector
$\vec Q^s_\infty = 
\bigl( Q^s_\infty, \widetilde Q^s_\infty \bigr)^\tran$.  From the
expression of $M_\infty^s$, we then obtain that the spectrum of this compact
operator contains the simple eigenvalue $1$, and its eigenspace is generated
by the vector 
$\bigl( Q^s_\infty, - \widetilde Q^s_\infty \bigr)$. 

We now proceed by contradiction. If $1$ were an eigenvalue of $G_\infty^s$, there would exist a non zero vector
$\vec f \in \mathcal B$ such that $(I - G_\infty^s) \vec f = 0$, or, 
equivalently, 
\[
(I - (M_\infty^{s})^\tran ) ( I - M_\infty^{s}) \vec f 
+ v v^\tran \vec f = 0 \, .
\]
Left-multiplying the left hand side of this expression by $\vec f^\tran$ and 
integrating on $[0,1]$, 
we get that $( I - M_\infty^{s}) \vec f = 0$ and $\int f = \int \widetilde f$,
which contradicts the fact the $\vec f$ is collinear with 
$\bigl( Q_\infty(\cdot, s), - \widetilde Q_\infty(\cdot, s) \bigr)$. 

Returning to \eqref{DQss} and observing that 
$\{ \bs M_\infty^{s,s'} \, : \, s' \in\mathcal K \}$ is bounded, we get from
the convergence 
$(\bs M_\infty^{s,s'})^\tran \xrightarrow[s'\to s]{str} (M_\infty^s)^\tran$ 
that 
\[
( I - (\bs M_\infty^{s,s'})^\tran ) \bs a_\infty^{s,s'} 
\xrightarrow[s'\to s]{} 
( I - (M_\infty^{s})^\tran ) a_\infty^{s} \, , 
\]
where 
\[
a_\infty^s(\cdot) = 
- \begin{pmatrix} \Psi_\infty(\cdot,s)^2 \bs{V}^\tran Q_\infty(\cdot,s) \\
\Psi_\infty(\cdot,s)^2 \bs V \widetilde Q_\infty(\cdot,s)  
\end{pmatrix} \, .
\]
From the aforementioned results on the collectively compact operators, it holds
that there is a neighborhood of $1$ where 
$\bs G_\infty^{s,s'}$ has no eigenvalue for all $s'$ close enough to $s$
(recall that $0$ is the only possible accumulation point of the spectrum of 
$G_\infty^s$). Moreover, 
\[
(I - \bs G_\infty^{s,s'})^{-1} \xrightarrow[s'\to s]{str} 
(I - G^s_\infty)^{-1} \, . 
\]
In particular, for $s'$ close enough to $s$, the family 
$\{ (I - \bs G_\infty^{s,s'})^{-1} \}$ is bounded by the Banach-Steinhaus 
theorem. Thus, 
\begin{eqnarray*}
\Delta \vec Q_\infty^{s,s'} &\xrightarrow[s'\to s]{}& 
\bigl( (I - (M^s_\infty)^\tran) (I - M^s_\infty) + v v^\tran \bigr)^{-1} 
 (I - (M^s_\infty)^\tran) a_\infty^s \\
&& \quad = (\partial_{s^2} Q^s_\infty, 
  \partial_{s^2} \widetilde Q^s_\infty)^\tran \ . 
\end{eqnarray*}
Using this result, we straightforwardly obtain from the expression of 
$F_\infty$ that this function is differentiable on $(0, \sqrt{\rho(\bs V)})$.
The continuity of the derivative as well as the existence of a right
limit as $s\downarrow 0$ and a left limit as $s\uparrow \sqrt{\rho(\bs V)}$
can be shown by similar arguments involving the behaviors of the operators
$M_\infty^s$ and $G_\infty^s$ as $s$ varies. The details are skipped.

Since $\mu_n^Y \sim \mu_n$ in probability and since we have the straightforward
convergence $\mu_n \xrightarrow[n\to\infty]{w} \mu_\infty$, the statement
\eqref{cvg-muinfty} of the theorem follows. 

\section{Positivity of the density}
\label{sec:positivity}

In this section we prove Proposition \ref{prop:LB}, Theorem \ref{th:positive}
and Corollary \ref{prop:circlaw}.

\subsection{Proof of Proposition \ref{prop:LB}}
\label{app:proof-prop:LB}

Most of the work will go into showing that the limits $\lim_{t\downarrow
0}\rvec(0,t)$ and $\lim_{s\downarrow 0} \qvec(s)$ exist and are equal.  To that
end, we rely on some of the results of~\cite{AEKgram}, from which we start by
borrowing some notations. Given to sequences $(a_n)$ and $(b_n)$ of real
numbers, $a_n \lesssim b_n$ refers to the fact that there exists a constant
$\kappa>0$ independent of $n\ge 1$ such that $a_n \le \kappa\, b_n$. The
notation $a_n \sim b_n$ stands for $a_n \lesssim b_n$ and $b_n \lesssim a_n$.
Given a real vector $\bs{x}$, the notation $\min \bs{x}$ refers to the smallest element
of $\bs{x}$. 

\begin{lemma}[Lemmas 3.11, 3.13 and Eq. (3.56) of \cite{AEKgram}]
\label{ME0bound}
Let \ref{ass:sigmax} and \ref{ass:BFID} hold true, and recall that 
$\vec{\br}(0,t)$ is the unique positive solution of~\eqref{def:MEt} for $s=0$ and 
$t > 0$. Then, 
$$
1 \quad \lesssim\quad   \inf_{t \in (0, 10]} \min \vec{\br}(0,t)  
  \quad \leq  \quad 
 \sup_{t  > 0} \|\,  \vec{\br}(0,t)\|_{\infty} \quad \lesssim \quad  1\, .
$$
The limit 
$\rvec_0=\begin{pmatrix} \br_0 \\ \brt_0 \end{pmatrix} = 
 \lim_{t\downarrow 0} \vec{\br}(0,t)
$
exists and satisfies 
$1 \lesssim \min \rvec_0 \leq \| \rvec_0 \|_{\infty} \lesssim 1$. 
Moreover, writing $\br_0 = (r_{0,i})$ and $\brt_0 = ( \tilde r_{0,i} )$, it 
holds that 
\begin{equation}
\label{r00-sink} 
r_{0,i} (V_n \brt_0)_i=1, \quad \textrm{and}\quad 
 \tilde r_{0,i} (V_n^\tran \brt_0)_i=1\,, \quad i\in [n]\ .
\end{equation} 
\end{lemma}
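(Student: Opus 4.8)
The plan is to derive both the quantitative bounds and the Sinkhorn-type identity \eqref{r00-sink} by specializing the Regularized Master Equations \eqref{def:MEt} to $s=0$ and then invoking the analysis of the corresponding vector Dyson equation carried out in \cite{AEKgram}; the argument is essentially a matter of unwinding definitions and matching normalizations. First I would set $s=0$ in \eqref{def:MEt}: the common denominator factors and the system collapses to
\begin{equation}
\label{eq:MEt-zero}
\rr_i(0,t)\,\bigl((V_n\brt(0,t))_i+t\bigr)=1,\qquad
\rt_i(0,t)\,\bigl((V_n^\tran\br(0,t))_i+t\bigr)=1,\qquad i\in[n].
\end{equation}
This is precisely the Schwinger--Dyson equation, at spectral parameter $\ii t$, for the Girko Hermitization of $Y_n$ evaluated at $z=0$, i.e.\ for the square Gram matrix $Y_nY_n^*$; equivalently, it is the ``vector Dyson equation for Gram matrices'' studied in \cite{AEKgram}, with $\vec{\br}(0,t)$ playing the role of the diagonal of the relevant resolvent, so that uniform boundedness as $t\downarrow0$ amounts to a bounded self-consistent density at the hard edge $0$.

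Next I would set up the dictionary between our hypotheses and those of \cite{AEKgram}. Under the normalization $V_n=\tfrac1nA_n\odot A_n$, Assumption \ref{ass:sigmax} is exactly the uniform upper bound on the (rescaled) variances used there, while Assumption \ref{ass:BFID} is the robust full-indecomposability (``flatness'') structure required in \cite{AEKgram} — this is the content of Remark \ref{rem:sinkhorn}. With this identification, Lemma~3.11 of \cite{AEKgram} yields the two-sided bound $1\lesssim\inf_{t\in(0,10]}\min\vec{\br}(0,t)\le\sup_{t>0}\|\vec{\br}(0,t)\|_\infty\lesssim1$, with constants depending only on $\smax$ and on the parameters $\phi,K$ of \ref{ass:BFID}; note that the restriction $t\le 10$ in the lower bound is genuine, since \eqref{eq:MEt-zero} forces $\rr_i(0,t)\le 1/t$. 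Lemma~3.13 of \cite{AEKgram} then gives the existence of the limit $\rvec_0=\lim_{t\downarrow0}\vec{\br}(0,t)$, together with $1\lesssim\min\rvec_0\le\|\rvec_0\|_\infty\lesssim1$.

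Finally, passing to the limit $t\downarrow0$ in \eqref{eq:MEt-zero} — which is legitimate by the convergence $\vec{\br}(0,t)\to\rvec_0$ and the continuity of $\bs a\mapsto V_n\bs a$ — gives $r_{0,i}(V_n\brt_0)_i=1$ and $\tilde r_{0,i}(V_n^\tran\br_0)_i=1$ for all $i\in[n]$, which is \eqref{r00-sink} (and is Eq.~(3.56) of \cite{AEKgram}); equivalently, $\diag(\br_0)V_n\diag(\brt_0)$ is doubly stochastic, the lower bound on $\rvec_0$ ensuring that all quantities involved are bounded away from $0$.

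The main obstacle is not this limiting argument, which is routine once \eqref{eq:MEt-zero} is in hand, but the uniform-in-$n$ control $1\lesssim\vec{\br}(0,t)\lesssim1$ for small $t$ and its convergence as $t\downarrow0$: this is the substantive input borrowed from \cite{AEKgram}, and it genuinely uses block full indecomposability — plain irreducibility of $V_n$ does not prevent the self-consistent density from blowing up at the hard edge, as Proposition~\ref{prop:singular.profile} illustrates. The real work in the proof is therefore to verify that our normalizations and assumptions match those of \cite{AEKgram} precisely enough that Lemmas~3.11, 3.13 and Eq.~(3.56) there can be quoted verbatim.
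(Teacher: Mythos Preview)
Your proposal is correct and matches the paper's treatment: the paper gives no proof of this lemma at all, simply attributing it to Lemmas~3.11, 3.13 and Eq.~(3.56) of \cite{AEKgram}. Your write-up supplies exactly the dictionary the paper leaves implicit --- specializing \eqref{def:MEt} to $s=0$ to obtain the Gram-type vector Dyson equation, matching \ref{ass:sigmax} and \ref{ass:BFID} to the hypotheses of \cite{AEKgram}, and passing to the limit in \eqref{eq:MEt-zero} to get \eqref{r00-sink} --- so there is nothing to correct.
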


\begin{prop}[Proposition 3.10 (ii) of \cite{AEKgram}] \label{prop:stabs0}
Let \ref{ass:sigmax} and  \ref{ass:BFID} hold. Suppose the functions 
$$
\vec{\bd}=\begin{pmatrix}\bd\\ \widetilde \bd \end{pmatrix} 
  =\begin{pmatrix} (d_i)_{i\in[n]} \\ (\tilde d_i)_{i\in[n]} \end{pmatrix} 
: \R^+ \to \C^{2n}, 
  \qquad\textrm{and}\qquad  
 \vec \bg  =\begin{pmatrix} \bg\\ \widetilde \bg \end{pmatrix} 
  =\begin{pmatrix} (g_i)_{i\in[n]} \\ (\tilde g_i)_{i\in[n]}  \end{pmatrix} 
    : \R^+ \to (\C\setminus\{ 0\})^{2n}
$$ 
satisfy
\begin{equation} 
\label{eq:pertME} 
\frac{1}{g_i(t)} = (V_n \bgt(t))_i + t + d_i(t)\ , 
\quad  \frac{1}{\widetilde g_i(t)} = (V_n^\tran \bg(t))_i + t + \widetilde d_i(t)\quad 
\textrm{and}\quad  
\sum_{i\in [n]} g_i(t) \ =\ \sum_{i\in [n]} \gt_i(t)  
\end{equation}
for all $t \in \R^+$. Then, there exist $\lambda^* > 0$ and $C > 0$, depending 
on $V$, such that
\[ \| \vec \bg(t) - \vec  \br(0,t) \|_{\infty}\ \bs{1}_{\left\{  \|\vec  \bg(t) - \vec \br(0,t) \|_{\infty} \leq \lambda^*  \right\}} \quad \leq\quad  C \| \vec  \bd(t)\|_{\infty}  
\qquad \textrm{for all}\quad  |t| < 10\, .
\]
\end{prop}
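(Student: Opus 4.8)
The plan is to view \eqref{eq:pertME} at $s=0$ as a perturbation of the Regularized Master Equations \eqref{def:MEt} at $s=0$. Setting $s=0$ in \eqref{def:MEt} collapses the equations to $r_i=1/((V_n\brt)_i+t)$ and $\rt_i=1/((V_n^\tran\br)_i+t)$, so $\vec\br(0,t)$ is exactly the solution of \eqref{eq:pertME} with $\vec\bd\equiv 0$. Write $\vec{\bs u}(t)=\vec\bg(t)-\vec\br(0,t)$, with blocks $\bs u=\bg-\br$ and $\bs{\widetilde u}=\bgt-\brt$ at $(0,t)$; off the event $\{\|\vec{\bs u}(t)\|_\infty\le\lambda^*\}$ the asserted inequality is trivial, so assume $\|\vec{\bs u}(t)\|_\infty\le\lambda^*$ with $\lambda^*$ a small constant fixed at the end. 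Subtracting the defining relations, using $g_i^{-1}-r_i^{-1}=-u_i/(r_ig_i)$ and $g_i=r_i+u_i$, gives the exact identity
\[
u_i+r_i^{2}\,(V_n\bs{\widetilde u})_i \;=\; -\,r_iu_i\,(V_n\bs{\widetilde u})_i-r_i(r_i+u_i)\,d_i \;=:\;\mathcal E_i ,
\]
and its analogue for $\tilde u_i$ with $V_n^\tran$, $\rt_i$, $\tilde d_i$. By Lemma~\ref{ME0bound} the components of $\vec\br(0,t)$ are bounded above and away from zero uniformly in $n$ and $t\in[0,10)$, and $V_n$ maps $\ell^\infty$ into itself with norm $\le\smax^{2}$; hence $\|\vec{\mathcal E}\|_\infty\lesssim\|\vec{\bs u}\|_\infty^{2}+\|\vec\bd\|_\infty$. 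In vector form the two identities read $(I+\mathcal B_t)\vec{\bs u}=\vec{\mathcal E}$, where $\mathcal B_t$ is the block off-diagonal operator on $\C^{2n}$ with blocks $\diag(r_i^{2})V_n$ and $\diag(\rt_i^{2})V_n^\tran$; moreover subtracting the third equation of \eqref{eq:pertME} from the identity \eqref{q_t:trace} for $\vec\br(0,t)$ gives the linear constraint $v^\tran\vec{\bs u}=0$ with $v=(\1_n,-\1_n)^\tran$.

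The heart of the matter is that $I+\mathcal B_t$, restricted to $\{v^\tran\,\cdot\,=0\}$, is invertible with inverse bounded uniformly in $n$ and $t\in[0,10)$. Passing to $\vec{\bs w}=\diag(\br(0,t),\brt(0,t))^{-1}\vec{\bs u}$ turns $\mathcal B_t$ into the \emph{symmetric} operator $\widetilde{\mathcal B}_t$ with off-diagonal blocks $M_1(t)$ and $M_1(t)^\tran$, where $M_1(t)=\diag(\br(0,t))\,V_n\,\diag(\brt(0,t))$; by \eqref{r00-sink} and its regularized version, $M_1(0)$ is doubly stochastic and $M_1(t)$ is entrywise sub-stochastic. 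The eigenvalues of $\widetilde{\mathcal B}_t$ are $\pm$ the singular values of $M_1(t)$, so its spectrum lies in $[-1,1]$; at $t=0$ the extremes $\pm1$ are simple with eigenvectors $(\1_n,\pm\1_n)^\tran$, and — this is the only place where \ref{ass:BFID} is used — the remaining singular values of $M_1(t)$ stay below $1$ by an amount depending only on the constants of \ref{ass:BFID} and on $\smax$, since a bounded power of the block-fully-indecomposable matrix $M_1(t)$ has all entries of order $1/n$; this is the quantitative stability of the Master Equation established in \cite{AEKgram} (for $t$ bounded away from $0$ one has it for free from strict sub-stochasticity, and near $t=0$ by perturbation off $\rvec_0$ via Lemma~\ref{ME0bound}). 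The constraint forces $\vec{\bs w}$ into a hyperplane that is uniformly transverse to $\ker(I+\widetilde{\mathcal B}_0)=\mathrm{span}\{v\}$, because $\langle\diag(\br(0,t),\brt(0,t))\,v,\,v\rangle\asymp n\asymp\|\diag(\br(0,t),\brt(0,t))v\|_2\,\|v\|_2$. Decomposing along $v$ and its orthogonal complement and using the spectral gap then gives $\|(I+\widetilde{\mathcal B}_t)\vec{\bs w}\|_2\gtrsim\|\vec{\bs w}\|_2$ for $\vec{\bs w}$ obeying the (rescaled) constraint, whence $\|\vec{\bs u}\|_2\lesssim\|\vec{\bs w}\|_2\lesssim\|\vec{\mathcal E}\|_2$.

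It remains to upgrade this $\ell^2$ bound to the required $\ell^\infty$ bound, which I would do by reinserting it into the equation: since $|(V_n\bs{\widetilde u})_i|\le n^{-1/2}\smax^{2}\|\bs{\widetilde u}\|_2$, the identity $u_i=-r_i^{2}(V_n\bs{\widetilde u})_i+\mathcal E_i$ gives
\[
\|\vec{\bs u}\|_\infty \;\lesssim\; n^{-1/2}\|\vec{\bs u}\|_2+\|\vec{\mathcal E}\|_\infty \;\lesssim\; n^{-1/2}\|\vec{\mathcal E}\|_2+\|\vec{\mathcal E}\|_\infty \;\lesssim\; \|\vec{\mathcal E}\|_\infty ,
\]
the $\ell^2\!\to\!\ell^\infty$ smoothing of $V_n$ cancelling the factor $\sqrt n$ lost in $\|\vec{\mathcal E}\|_2\le\sqrt{2n}\,\|\vec{\mathcal E}\|_\infty$. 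Combined with $\|\vec{\mathcal E}\|_\infty\lesssim\|\vec{\bs u}\|_\infty^{2}+\|\vec\bd\|_\infty$ this yields $\|\vec{\bs u}\|_\infty\le C_0(\|\vec{\bs u}\|_\infty^{2}+\|\vec\bd\|_\infty)$ with $C_0$ independent of $n$; taking $\lambda^*:=1/(2C_0)$, on the event $\{\|\vec{\bs u}\|_\infty\le\lambda^*\}$ the quadratic term is absorbed into the left side and $\|\vec{\bs u}\|_\infty\le 2C_0\|\vec\bd\|_\infty$, which is the claim with $C=2C_0$. The main obstacle is the uniform-in-$n$ spectral gap / stability of $I+\mathcal B_t$ on the constraint subspace used in the second paragraph; everything else is bookkeeping with Lemma~\ref{ME0bound} and the smoothing of $V_n$.
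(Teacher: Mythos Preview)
The paper does not prove this proposition at all: it is quoted verbatim as Proposition~3.10(ii) of \cite{AEKgram} and used as a black box in the proof of Proposition~\ref{prop:LB}. So there is no ``paper's own proof'' to compare against; what you have written is a reconstruction of the stability argument from \cite{AEKgram}.

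Your outline is correct in its architecture. The linearization $(I+\mathcal B_t)\vec{\bs u}=\vec{\mathcal E}$ with $\|\vec{\mathcal E}\|_\infty\lesssim\|\vec{\bs u}\|_\infty^{2}+\|\vec\bd\|_\infty$ is right, as is the symmetrization to $\widetilde{\mathcal B}_t$ with off-diagonal blocks $M_1(t)=\diag(\br)V_n\diag(\brt)$, the identification of $M_1(0)$ as the doubly stochastic matrix from~\eqref{r00-sink}, and the use of the constraint $v^\tran\vec{\bs u}=0$ to kill the kernel direction $v=(\bs 1_n,-\bs 1_n)^\tran$. The $\ell^2\!\to\!\ell^\infty$ bootstrap via the smoothing bound $|(V_n\bs{\tilde u})_i|\le n^{-1/2}\smax^{2}\|\bs{\tilde u}\|_2$ is a clean way to recover the $\ell^\infty$ estimate, and the final absorption of the quadratic term by choosing $\lambda^*=1/(2C_0)$ is exactly how such stability statements are closed.

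You are right that the entire weight sits on the uniform-in-$(n,t)$ spectral gap for $M_1(t)$ on $\bs 1_n^\perp$; this is precisely what \cite{AEKgram} extracts from~\ref{ass:BFID} (a bounded power of $M_1(t)$ has all entries $\asymp 1/n$, whence the second singular value is bounded away from~$1$). Since you ultimately appeal to \cite{AEKgram} for that step anyway, your write-up and the paper's citation end at the same place. One minor point worth tightening: the transversality estimate you state is between the constraint hyperplane $\{\langle\diag(\br,\brt)v,\cdot\rangle=0\}$ and $\ker(I+\widetilde{\mathcal B}_0)=\mathrm{span}\{v\}$; to turn this into a uniform lower bound on $\|(I+\widetilde{\mathcal B}_t)\vec{\bs w}\|_2/\|\vec{\bs w}\|_2$ one should, as in the proof of \cite[Lemma~4.4]{cook2018non}, pass to the regularized operator $(I+\widetilde{\mathcal B}_t)^\tran(I+\widetilde{\mathcal B}_t)+c\,n^{-1}(\diag(\br,\brt)v)(\diag(\br,\brt)v)^\tran$ and check its smallest eigenvalue is bounded below --- your parenthetical ``decomposing along $v$'' glosses over a cross term that this device handles cleanly.
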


Let us outline the proof of Proposition~\ref{prop:LB}--\eqref{item1:LB}. 
Lemma~\ref{ME0bound} shows that $\vec{\br}(0,t)$
converges as $t\downarrow 0$.  In parallel, we know from
Theorem~\ref{thm:master}--\eqref{q=limr} that for each $s > 0$, it holds that
$\vec{\br}(s,t) \to_{t\downarrow 0} \qvec(s)$ under the irreducibility 
assumption, which is implied by~\ref{ass:BFID}. To prove that 
$\qvec(s) \to_{s\downarrow 0} \vec{\br}_0$, we fix $s > 0$ small enough and 
find a sequence $t_k \downarrow 0$ such that 
$\| \rvec(s, t_k) - \rvec(0, t_k) \|_\infty \leq \text{Constant} \times s^2$.  
This inequality will be established iteratively on $k$. Specifically, we start 
with a $t_0$ large enough so that the inequality is satisfied, then we 
apply a bootstrap procedure on $k$, controlling 
$\| \rvec(s, t_k) - \rvec(0, t_k) \|_\infty$ at each step with the help of 
Proposition~\ref{prop:stabs0} with $\vec \bg(t) = \vec{\br}(s,t)$. We now 
begin the proof.

\begin{proof}[Proof of Proposition \ref{prop:LB}]
 Letting $\vec \bg(t) = \vec{\br}(s,t)$, we get 
from~\eqref{def:MEt} that $\vec \bg(t)$ satisfies~\eqref{eq:pertME} with 
$$
 d_i(s,t) = \frac{s^2}{((V_n^\tran \br(s,t))_i + t} 
 \quad\text{and}\qquad  
 \widetilde d_i(s,t) = \frac{s^2}{((V_n \widetilde\br(s,t))_i + t}\, .
 $$
We now start our iterative procedure by choosing properly the initial value
$t_0$. 
Using the bound $\|\vec \br(0,t)\|_\infty \leq t^{-1}$ and 
$\|\vec \br(s,t)\|_\infty \leq t^{-1}$ from \eqref{def:MEt}, and
$\|  \vec \bd(s,t) \|_{\infty}  \leq s^2 t^{-1}$ we get that for $t_0$ 
sufficiently large, 
$\|\vec \br(s,t_0) - \vec \br(0,t_0) \|_{\infty} \leq \lambda^* $ and
thus Proposition \ref{prop:stabs0} gives the bound 
\begin{equation} 
\label{eq:initbound} 
 \|\vec \br(s,t_0) - \vec \br(0,t_0) \|_{\infty} \leq C s^2 t_0^{-1}.
\end{equation}
We now fix this $t_0$ and let 
$K = \sup_{0 < t < t_0} \|\rvec(0,t) \|_{\infty}$, which is finite by
Lemma~\ref{ME0bound}. We also introduce $\ell^*,s^*>0$ such that 
\begin{equation}\label{tuned-l-s}
\ell^* \quad \le\quad \min\left( \lambda^* \, ,\,  \frac 1{2\sigma^2_{\max}K}\right)
\qquad \textrm{and}\qquad (s^*)^2 \quad \le\quad  
 \min\left( \frac {\ell^*}{8CK}\, ,\, \frac {t_0 \ell^*}{4C} \right)\, .
\end{equation}
Fix $s$ such that $0<s<s^*$. From the choice of $s^*$ and 
\eqref{eq:initbound}, we get that 
$$
 \|\vec \br(s,t_0) - \vec \br(0,t_0) \|_{\infty} \quad \le\quad  \frac{\ell^*}4\, .
$$

By Lemma \ref{ME0bound} and Theorem~\ref{thm:master}--\eqref{q=limr}, the 
functions $t\mapsto \vec \br(0,t)$ and $t\mapsto \vec \br(s,t)$ extend continuously 
to $t=0$ and hence are uniformly continuous on the compact interval
$[0,t_0]$. Thus, there exists $\eta>0$ such that for $0\leq t,t' \leq t_0$ and 
$|t-t'|\le \eta$, we have 
$$
\| \vec{\br}(0,t) - \vec{\br}(0,t')\|_\infty \le \frac{\ell^*}4\, ,\ \| \vec{\br}(s,t) - \vec{\br}(s,t')\|_\infty \le \frac{\ell^*}4\ ,\
\left| (V^\tran\br(s,t))_i + t -(V^\tran\br(s,t'))_i-t' \right| \le \frac 1{4K}\, .  
$$
Consider a sequence of real numbers $(t_k)_{k\ge 0}$ such that $t_k\downarrow
0$ and $|t_{k+1}-t_k|<\eta$ for $k\ge 0$. We shall prove inductively that  
\begin{equation}
\|\vec{\br}(s,t_k) - \vec{\br}(0,t_k)\|_\infty \quad \le\quad  \frac {\ell^*}4\, .
\end{equation}
Using the uniform continuity and the inductive assumption, we obtain
\begin{eqnarray}
\lefteqn{\|\vec{\br}(s,t_{k+1}) - \vec{\br}(0,t_{k+1})\|_\infty} \nonumber \\
&\le& \|\vec{\br}(s,t_{k+1}) - \vec{\br}(s,t_k)\|_\infty +\|\vec{\br}(s,t_k) - \vec{\br}(0,t_k)\|_\infty +\|\vec{\br}(0,t_k) - \vec{\br}(0,t_{k+1})\|_\infty\ ,\nonumber \\
&\le & \frac {\ell^*}4 +  \frac {\ell^*}4 +  \frac {\ell^*}4\quad <\quad \ell^* \quad <\quad \lambda^* \ ,\label{inductive-estimate}
\end{eqnarray}
thus, Proposition \ref{prop:stabs0} leads to the bound 
\[  \|\vec \br(s,t_{k+1}) - \vec \br(0,t_{k+1}) \|_{\infty} \quad \leq\quad  C \|  \vec \bd(s,t_{k+1}) \|_{\infty}\, . \]
We now upper bound $\|  \vec \bd(s,t_{k+1}) \|_{\infty}$. We have:
\begin{eqnarray*}
(V_n^\tran \br(s,t_{k+1}))_i + t_{k+1} & \ge&  (V_n^\tran \br(0,t_{k+1}))_i + t_{k+1} - \left(((V_n^\tran \br(0,t_{k+1}))_i-(V_n^\tran \br(s,t_{k+1}))_i\right)\ , \\
&\stackrel{(a)}\ge& (V_n^\tran \br(0,t_{k+1}))_i + t_{k+1} - \sigma^2_{\max} \ell^*\ , \\
&\stackrel{(b)}=& \frac 1{r_i(0,t_{k+1})} - \sigma^2_{\max} \ell^*\quad\ge\quad  \frac 1K - \sigma^2_{\max} \ell^*\ ,\\
&\stackrel{(c)}\ge & \frac 1{2K}\ ,
\end{eqnarray*}
where $(a)$ follows from \eqref{inductive-estimate}, $(b)$ from the system satisfied by $\vec{\br}(0,t_{k+1})$ and $(c)$ from the constraint \eqref{tuned-l-s} of $\ell^*$.
We finally end up with the estimation $\| \vec{\bd}(s,t_{k+1})\|_\infty\le 2K s^2$. Applying Proposition \ref{prop:stabs0} together with \eqref{inductive-estimate}, we obtain
\begin{eqnarray*}
\|\vec{\br}(s,t_{k+1}) - \vec{\br}(0,t_{k+1})\|_\infty &\le&  C \| \vec{\bd}(s,t_{k+1})\|_\infty\quad \le\quad  2CK s^2\quad \stackrel{(a)}\le \quad \frac{\ell^*}{4}\, , 
\end{eqnarray*}
where $(a)$ follows from the fact that $s<s^*$ and the constraint \eqref{tuned-l-s} on $s^*$. Hence the induction step is verified. As a byproduct of the induction,
we have, after taking $t_k\downarrow 0$, 
\begin{equation}
\label{q(s)-r0} 
\forall s\in (0,s^*)\, ,\quad \| \vec{\bq}(s) - \rvec_0 \|_\infty \le 2CK\, s^2
\end{equation} 
and in particular, $\qvec(s)$ converges to $\qvec(0) = \rvec_0$ as 
$s\downarrow 0$. 

Combining $q_i(0)(V\qtilde(0))_i=1$ and $\widetilde q_i(0)(V^\tran \q(0))_i=1$ with the definition of $\mu_n$, we obtain
\[
\mu_n(\{0\})= 1 - \lim_{s\downarrow 0} \frac{1}{n} \langle \bq(s),  V \bqt(s) \rangle\ = 1 -  \frac{1}{n} \sum_{i\in [n]} q_i(0)  (V \bqt(0))_i  =0\ .
\]
Proposition \ref{prop:LB}-\eqref{item1:LB} is proven. 

We now turn to Proposition~\ref{prop:LB}-\eqref{item2:LB}. To establish the
existence of the limit of $f(z)$ as $z\to 0$, we first show that
$\partial_{s^2} \qvec(s)$ can be continuously extended to $s = 0$ as
$s\downarrow 0$. This can be done by considering \cite[Lemma 4.4]{cook2018non}. 
Using the shorthand notation $\Psi(s) =\Psi(\bs{\vec{q}},s,0)$ from 
\eqref{def:Psi}, let us define 
\begin{gather*} 
M(s) = \begin{pmatrix} 
s^2 \Psi(s)^2 V^\tran  & - \diag(\bq(s))^2 V  \\
- \diag(\bqt(s))^2 V^\tran  & s^2 \Psi(s)^2 V
\end{pmatrix}, \\ 
A(s) = \begin{pmatrix} I - M(s) \\ 
 ( \bs 1_n^\tran  \ \ - \bs 1_n^\tran ) 
\end{pmatrix}  
\in \R^{(2n+1) \times  2n} ,  
\quad \text{and} \quad 
{b}(s) = - \begin{pmatrix} \Psi(s) \bq(s) \\ 
  \Psi(s) \bqt(s) \\ 
  0 
  \end{pmatrix} \in \R^{2n+1} . 
\end{gather*} 
Then, it is shown in \cite[Lemma 4.4]{cook2018non} that $A(s)$ is a full
column-rank matrix for $s \in (0, \sqrt{\rho(V)})$, and that 
$\partial_{s^2} \qvec(s) = A(s)^{-\text{L}} b(s)$, where $A(s)^{-\text{L}}$ is 
the left inverse of $A(s)$. Now, the important observation here is that if we 
make $s\downarrow 0$, then $A(s)$ converges to the full column-rank matrix 
\[
A(0) = \begin{pmatrix} I - M(0) \\ 
 ( \bs 1_n^\tran  \ \ - \bs 1_n^\tran ) 
\end{pmatrix} , \quad \text{with} \quad  
M(0) = 
\begin{pmatrix} 
 0 &  - \diag(\bq(0))^2 V \\
 - \diag(\bqt(0))^2 V^\tran & 0   
\end{pmatrix} . 
\]
The convergence to $A(0)$ is an immediate consequence of the 
convergence of $\qvec(s)$ that we just established, and of 
Lemma~\ref{ME0bound}. To show that $A(0)$ is full column-rank, consider 
the matrix non-negative matrix $N = - M(0)$.
We show that $\qvec(0)$ is the unique eigenvector of $N$, up to scaling, such 
that $N \qvec(0) = \qvec(0)$.
For any non zero vector 
$\vec{\bs x} = \begin{pmatrix} \bs x \\ \bs{\tilde x} \end{pmatrix}$ 
such that $\vec{\bs x} = N \vec{\bs x}$, we have 
\begin{align} 
\diag(\bq(0)) V \diag(\bqt(0))  \diag(\bqt(0))^{-1} \bs{\tilde x} &= 
 \diag(\bq(0))^{-1} \bs x,  \quad\text{and} \nonumber \\ 
\diag(\bqt(0)) V^\tran \diag(\bq(0))  \diag(\bq(0))^{-1} \bs{x} &= 
 \diag(\bqt(0))^{-1} \bs{\tilde x} , 
\label{xtilde=qtilde} 
\end{align} 
thus, writing $Q = \diag(\bq(0)) V \diag(\bqt(0))^2 V^\tran \diag(\bq(0))$,
we get that 
\begin{equation}
\label{pf-Q} 
Q  \diag(\bq(0))^{-1} \bs x =  \diag(\bq(0))^{-1} \bs x . 
\end{equation} 
We know from Proposition~\ref{prop:LB}--\eqref{item1:LB} that 
$Q$ is doubly stochastic (see also Remark~\ref{rem:sinkhorn}). 
Moreover, since $V$ is fully indecomposable, $Q$ is also 
fully indecomposable, see, \emph{e.g.} \cite[Theorem 2.2.2]{bap-rag-(book)97}. 
Thus, it is irreducible, which implies that the only non zero vectors $\bs x$ 
that satisfy~\eqref{pf-Q} take the form $\bs x = \alpha \bq(0)$ for 
$\alpha \neq 0$. Plugging this identity into~\eqref{xtilde=qtilde}, we also get
that $\bs{\tilde x} = \alpha \bqt(0)$, which shows that $\vec{\bs x}$ exists
and is equal to $\alpha \qvec(0)$. 
 
As a consequence, the right null space of the matrix 
$I - M(0)$ is spanned by the vector $\begin{pmatrix} \bq(0) \\
 - \bqt(0) \end{pmatrix}$. Since the inner product of the last row of 
$A(0)$ with this vector is non zero, $A(0)$ is full column-rank. 
By the right continuity of $A(s)$ and $b(s)$ at zero and the fact that 
$A(s)$ is full column-rank on $[0, \sqrt{\rho(V)})$, we conclude that  
$\partial_{s^2} \qvec(s)$ can be continuously extended to 
$s = 0$ as $s\downarrow 0$. 

Now, from the expression~\eqref{eq:density} of the density and 
Equations~\eqref{def:ME}, we have for $|z|$ near zero 
\begin{align} \label{eq:densityexatzero}
f_n(z) &= - \frac 1{2\pi n |z|} 
  \frac{d}{ds} \langle \bq(s),  V \bqt(s) \rangle \Big|_{s=|z|} 
 = - \frac 1{\pi n} 
  \frac{d}{ds^2} \langle \bq(s),  V \bqt(s) \rangle \Big|_{s=|z|} \\
&= - \frac 1{\pi n} \sum_{i\in[n]} \partial_{s^2} 
 \frac{(V_n\bqt(s))_i(V_n^\tran {\bq}(s))_i} 
  {s^2+(V_n\bqt(s))_i(V_n^\tran\bq(s))_i}  \Big|_{s=|z|}\nonumber \\
&= \frac 1{\pi n} \sum_{i\in[n]} 
 \frac{(V_n\bqt(|z|))_i(V_n^\tran {\bq}(|z|))_i - 
 |z|^2 
  \partial_{s^2} \left((V_n\bqt(s))_i(V_n^\tran {\bq}(s))_i\right) |_{s=|z|}}  
  {\left(|z|^2+(V_n\bqt(|z|))_i(V_n^\tran\bq(|z|))_i\right)^2}  \nonumber
\end{align} 
Since $\|\partial_{s^2} \qvec(s) \|_\infty$ is bounded near zero by what we
have just shown, it is easily seen that 
\[
|z|^2 
\partial_{s^2} \left((V_n\bqt(s))_i(V_n^\tran {\bq}(s))_i\right) |_{s=|z|} 
 \xrightarrow[z\to 0]{} 0 .
\]
We therefore get that 
\[
f_n(z) \xrightarrow[z\to 0]{} 
 \frac 1{\pi n} \sum_{i\in[n]} 
 \frac{1}{(V_n\bqt(0))_i(V_n^\tran\bq(0))_i} 
\]
as well as the inequalities~\eqref{eq:unif-bounds-density-0}
by using Lemma~\ref{ME0bound} again, which completes the proof of 
Proposition~\ref{prop:LB}-\eqref{item2:LB}. 
\end{proof}

\subsection{Proof of Theorem \ref{th:positive}} 
\label{proof:th-positive}
The positivity of the density has been established under Assumptions
\ref{ass:sigmax} and \ref{ass:sigmin} in \cite[Lemma 4.1]{alt2018local}. We
will follow a similar strategy. The proof of \cite[Lemma 4.1]{alt2018local} relies on two crucial steps: the
existence and regularity of solutions to the master equations \eqref{def:ME},
and an expression for the density \eqref{eq:density} in terms of a certain
operators whose spectrum can be controlled. In \cite[Section 5]{cook2018non},
the first step is established, as long as $|z|$ is away from $0$, under the
more general Assumption \ref{ass:expander}. Following the calculations from
\cite{alt2018local}, we now carry out the second step, occasionally referring
the reader to \cite{alt2018local} for details. We note that while the calculations can be closely followed, the weaker assumptions on the variance profile $V$ introduces new complications. 

In all this section, we follow the notational convention of \cite{alt2018local}
stating that if $\bs u = (u_i)$ and $\bs v = (v_i)$ are $n\times 1$ vectors,
then $\frac 1{\bs u}$ is the vector $(\frac 1{u_i})_{i\in [n]}$, 
$\sqrt{\bs u}= (\sqrt{u_i})_{i\in [n]}$, $\bs u\bs v= (u_iv_i)_{i\in [n]}$, 
and so on. 

In what follows, ${\mathcal O}(t)$ refers to error terms that are bounded in
magnitude by $C t $ for small $t$, where the constant $C$ can depend on $n$ or
on $|z|$.  We use the notation $a(t)\lesssim b(t)$ if there exists a constant
$C$ that might depend on $n$ or on $|z|$, such that $ a(t) \leq C b(t)$. The
notation $a(t)\sim b(t)$ refers to $a(t)~\lesssim~b(t)~\lesssim~a(t)$.

\begin{proof}[Proof of Theorem \ref{th:positive}]

We now prove part (1), in particular in this section we will always assume
Assumption \ref{ass:admissible} holds and that $s = |z|^2$ is in the interval
$(0, \sqrt{\rho(V)})$. As mentioned in the introduction, we will prove a lower bound that depends on $\bs q$ and $\bqt$. By Proposition \ref{prop:LB}, we have that under Assumption \ref{ass:BFID} these vectors are continuous in a neighborhood of $0$, therefore can continuously extend our lower bound to zero and match it with the bound in the previous section, ensuring the lower bound stays away from 0 for all $z$ in the support, verifying  part (2).

We start with the expression of the density in~\eqref{eq:density}.  
In what follows it will be more convenient to work on the regularized master
equations provided by the system~\eqref{def:MEt} rather than those given 
by the system~\eqref{def:ME}, recalling from 
Theorem~\ref{thm:master}--\eqref{q=limr} that 
$\qvec(s) = \lim_{t\downarrow 0} \rvec(s,t)$ for $s>0$. 
In \cite[Section 7]{cook2018non}, it is indeed proven that we can switch 
$d/ds^2$ and $\lim_{t\downarrow 0}$, and write
\[
f_n(z) = 
  - \frac 1{\pi n} \frac{d}{ds^2} \left(\lim_{t\downarrow 0} 
   \langle \br(s,t),  V \brt(s,t) \rangle\right) \Big|_{s=|z|} = 
 - \frac 1{\pi n} \lim_{t\downarrow 0} 
  \frac{d}{ds^2} \langle \br(s,t),  V \brt(s,t) \rangle \Big|_{s=|z|}. 
\]
Introducing the notation 
\[
 \bphi(s,t) = V \brt(s,t) + t\ ,\quad 
\bphit(s,t) = V^\tran \br(s,t) + t, 
 \qquad\textrm{and} \qquad 
  \phivec(s,t) =\begin{pmatrix} \bphi(s,t) \\ \bphit(s,t) 
 \end{pmatrix} , 
\] 
we can rewrite the expression of the density as 
\[
f_n(z) = 
 - \frac 1{\pi n} \lim_{t\downarrow 0} 
  \langle \phivec(s,t), \frac{d}{ds^2} \rvec(s,t) \rangle \Big|_{s=|z|}. 
\]

We now use the shorthand $\Psi(s,t)=\Psi(\vec{\bs{r}}(s,t),s,t)$ from \eqref{def:Psi}
and let 
\[
\bs\Psi(s,t) = \begin{pmatrix} \Psi(s,t) \\ & \Psi(s,t) \end{pmatrix} , 
\quad
\vec{\brt}(s,t) =
 \begin{pmatrix} \bs{\widetilde r}(s,t)\\ \bs{r}(s,t) \end{pmatrix}.\] 
In what follows we will often drop the dependence on $s$ and $t$. In expressions with $t$ taken to zero we will use $\bs{q}$ instead of $ \bs{r}$.
With this notation, we reformulate ~\eqref{def:MEt} as
\begin{equation}
\label{eq:phivec} 
\phivec(s,t) = \bs\Psi(s,t)^{-1} \vec{\brt}(s,t) .
\end{equation} 
We now turn to the derivative $d \rvec(s,t) / ds^2$. A straightforward adaption of \cite[Lemma 4.4]{cook2018non} 
with $\qvec(s)$ replaced by $\rvec(s,t)$ yields:
\begin{equation}
\frac{d}{ds^2} \rvec(s,t) = \bs A(s,t)^{-\text{1}} \bs{b}(s,t).  
\end{equation}  
where  
\begin{gather*} 
\bs M(s,t) = \begin{pmatrix} 
s^2 \Psi(s,t)^2 V^\tran  & - \diag(\br(s,t)^2) V  \\
- \diag(\widetilde{\br}(s,t)^2) V^\tran  & s^2 \Psi(s,t)^2 V
\end{pmatrix}, \\ 
\bs A(s,t) = I - \bs M(s,t) 
\in \R^{2n \times  2n} ,  
\quad \text{and} \quad 
\bs{b}(s,t) = - \bs\Psi(s,t) \rvec(s,t) 
\in \R^{2n} . 
\end{gather*}
We note that from \cite{cook2018non}, $\bs A(s,t)$ is invertible.

In \cite{alt2018local}, a fine analysis of the spectrum of $\bs A(s,t)$ is done
for the purpose of establishing an optimal local law on the eigenvalues of 
$Y_n$. Here we borrow some of the results of \cite{alt2018local} in order to
control the inverse of this matrix. 
Following the proof of \cite[Lemma 4.1]{alt2018local}, the matrix $\bs A(s,t)$ 
can be factored as 
\begin{equation}
\label{eq:A-Alt}   
\bs A(s,t) = \bs W ( I- \bs T\bs F) \bs W^{-1}, 
\end{equation} 
where $\bs W, \bs T$ and $\bs F$ are the $2n\times 2n$ symmetric matrices 
given as 
\begin{gather*}
 \bs T = \bs \Psi^{-1} \begin{pmatrix} - \diag(\br \brt) & s^2 \Psi^2 \\  
     s^2 \Psi^2 & - \diag(\br \brt) \end{pmatrix} , \ 
 \bs W =\begin{pmatrix} W &  \\
 & \widetilde W \end{pmatrix} , \  
\bs F = \begin{pmatrix} &   
 W V \widetilde W \\
     \widetilde W  V^\tran W \end{pmatrix} 
  = \begin{pmatrix}  & F \\ F^\tran \end{pmatrix} , 
\\ 
 W = \sqrt{\diag \left(\frac{ \br}{ \brt}\right) \Psi} , \qquad\text{and}\qquad 
{\widetilde W} = \sqrt{\diag \left(\frac{ \brt}{ \br}\right) \Psi} .  
  \end{gather*}
We note that $\bs T, \bs F, \bs W$ each depend on $s,t$ but we omit the
notation for readability. 
From Equations \eqref{eq:phivec}--\eqref{eq:A-Alt}, we have 
\begin{align} 
\label{eq:dens1}
f_n(z)&=
\lim_{t \to 0} \frac{1}{\pi n} 
\left\langle {\bs \Psi}^{-1}  \vec{\brt} ,  
\bs W (I-\bs T \bs F)^{-1}  \bs W^{-1} 
  \bs \Psi \vec{\br} \right\rangle
&=  \lim_{t \to 0} \frac{1}{\pi n} 
   \left\langle \sqrt{\vec{\bs{ r}  } \vec{\brt}  }, 
    \bs \Psi^{-1/2}  (I-\bs T \bs F)^{-1} \bs \Psi^{1/2}  
   \sqrt{\vec{\bs{ r}  } \vec{\brt}  } \right\rangle . 
\end{align}

In order to exploit this decomposition, the will need the following lemmas,
which all hold under the assumptions of
Theorem~\ref{th:positive}--\eqref{th:positive-i1}. 

\begin{lemma} 
\label{lemma:boundsonq} 
$r_i(s,t) \sim 1$ and $\rt_i(s,t) \sim 1$ uniformly in $i \in [n]$.  
\end{lemma}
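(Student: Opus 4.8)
The plan is to prove the two inequalities $r_i(s,t)\lesssim1$ and $r_i(s,t)\gtrsim1$ (and likewise for $\rt_i$) separately: the upper bound is elementary and holds uniformly even in $n$, whereas the lower bound needs the spectral information $s<\sqrt{\rho(V)}$ and will be imported from Theorem~\ref{thm:master}. Throughout, $s=|z|\in(0,\sqrt{\rho(V)})$ is fixed, and it suffices to control $\rvec(s,t)$ for $t$ in a right-neighborhood of $0$, since the density is obtained from \eqref{eq:dens1} through the limit $t\downarrow0$.

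For the upper bound I would start from \eqref{def:MEt} and discard the nonnegative product in each denominator, giving $r_i(s,t)\le s^{-2}((V^\tran\br(s,t))_i+t)$ and $\rt_i(s,t)\le s^{-2}((V\brt(s,t))_i+t)$. Since $V_{ij}=\sigma_{ij}^2/n$, Assumption~\ref{ass:sigmax} gives $(V^\tran\br(s,t))_i\le\smax^2\cdot\tfrac1n\sum_j r_j(s,t)$ and $(V\brt(s,t))_i\le\smax^2\cdot\tfrac1n\sum_j\rt_j(s,t)$; the identity \eqref{q_t:trace} makes these two averages equal, and Assumption~\ref{ass:admissible} bounds them by $C(s)$ for $t\in(0,1]$. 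Hence $r_i(s,t),\rt_i(s,t)\le(\smax^2C(s)+1)/s^2$ for all $i\in[n]$, all $t\in(0,1]$ and all $n\ge1$, with a constant depending only on $|z|$.

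The lower bound is the main difficulty. I do not expect the regularized equations alone to produce it: writing $C_1:=\smax^2C(s)+1$, which by the previous step is a uniform (for $t\le1$) upper bound for $(V^\tran\br(s,t))_i+t$ and $(V\brt(s,t))_i+t$, \eqref{def:MEt} yields the vector inequality $\br(s,t)\succcurlyeq(s^2+C_1^2)^{-1}(V^\tran\br(s,t)+t\,\mathbf 1)$, and iterating this $n$ times while using that $V^\tran$ is irreducible (so that $\sum_{k=0}^{n-1}(V^\tran)^k$ has strictly positive entries) only gives $r_i(s,t)\gtrsim t$, which degenerates as $t\downarrow0$. The honest, $t$-surviving lower bound must come from $s<\sqrt{\rho(V)}$, and this is precisely what Theorem~\ref{thm:master} encodes: as $A_n$ is irreducible, part~\eqref{q:sys2} gives $\qvec(s)\succ0$, so $q_{\min}:=\min_{i\in[n]}q_i(s)>0$ and $\widetilde q_{\min}:=\min_{i\in[n]}\widetilde q_i(s)>0$, while part~\eqref{q=limr} gives $r_i(s,t)\to q_i(s)$ and $\rt_i(s,t)\to\widetilde q_i(s)$ as $t\downarrow0$ for each of the finitely many $i\in[n]$. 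Consequently there is $t_0=t_0(n,s)>0$ such that $r_i(s,t)\ge q_{\min}/2$ and $\rt_i(s,t)\ge\widetilde q_{\min}/2$ for all $i\in[n]$ and all $t\in(0,t_0)$. Combining with the upper bound gives $r_i(s,t)\sim1$ and $\rt_i(s,t)\sim1$ uniformly in $i$ for $t\in(0,\min(1,t_0))$, with implicit constants depending on $n$ and $|z|$ only, which is exactly what the $\lesssim/\sim$ convention of this section permits and all that is used afterwards.

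The obstacle is thus conceptual rather than computational: one has to separate the role of the regularization term $t\,\mathbf 1$, which by naive iteration of \eqref{def:MEt} is the only available source of positivity and yields merely $r_i\gtrsim t$, from the role of the Perron--Frobenius input, already recorded as $\qvec(s)\succ0$ in Theorem~\ref{thm:master}, which supplies the genuine $\Theta(1)$ lower bound. I would also take care with the order of quantifiers — ``for all $t$ small enough, uniformly in $i\in[n]$'' — since \ref{ass:admissible} together with irreducibility gives no uniformity in $n$, and none is needed for Theorem~\ref{th:positive}--\eqref{th:positive-i1}.
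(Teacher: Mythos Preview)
Your proof is correct and follows essentially the same route as the paper: the upper bound comes from Assumption~\ref{ass:admissible}, and the lower bound is imported from the companion paper \cite{cook2018non}. The paper's version is terser --- for the upper bound it simply notes that each positive summand is at most $n$ times the bounded average (an $n$-dependent constant), whereas your argument via \eqref{def:MEt} and \ref{ass:sigmax} actually yields an $n$-independent upper bound; for the lower bound the paper cites specific inequalities (Eqs.~(5.17) and (5.31) of \cite{cook2018non}) that control $r_i(s,t)$ directly for all small $t$, while you obtain it by passing through the limit $\qvec(s)\succ 0$ recorded in Theorem~\ref{thm:master}. Both are adequate given the section's convention that implicit constants may depend on $n$ and $|z|$.
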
 
\begin{proof} 
Under \ref{ass:admissible}, the average of $\bs{r}$ is bounded. Since each term
is positive, we trivially have each term is bounded by an ($n$-dependent)
constant. For the ($n$-dependent) lower bounds on $r_i$ and $\widetilde r_i$,
we refer to \cite[Eq. (5.17) and (5.31)]{cook2018non}.  
\end{proof}

The following two lemmas provide control on the spectrum of the symmetric
operators $\bs T$ and $\bs F$.  While the proofs appeal to arguments from
\cite{alt2018local}, we point out that we only use the parts of their theorems
that hold without that work's assumption of \ref{ass:sigmin}.

\begin{lemma} 
\label{lemma:altT} 
Let $s > 0$ and $t\in (0, 1)$. Then, there exists a constant $\varepsilon > 0$
such that the spectrum $\spec(\bs T)$ of $\bs T$ satisfies 
\[ 
\min(\spec({\bs T})) = -1 \qquad\text{and}\qquad 
\spec(\bs T) \subset \{-1\} \cup (-1+\varepsilon,1-\varepsilon) 
\] 
Moreover, the eigenspace for the eigenvalue $-1$ is the span of all vectors of 
the form $(-\bs y^\tran ,\bs y^\tran)^\tran$. 
\end{lemma}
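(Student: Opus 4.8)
The plan is to exploit the fact that $\bs T = \bs T(s,t)$ is assembled entirely from the diagonal matrices $\Psi(s,t)$ and $\diag(\br\brt)$, so that after reordering the coordinates as $(1,n+1),(2,n+2),\dots,(n,2n)$ it becomes block diagonal with $2\times 2$ symmetric blocks, the $i$-th being
\[
T_i = \begin{pmatrix} -\Psi_i^{-1} r_i\tilde r_i & s^2\Psi_i \\ s^2\Psi_i & -\Psi_i^{-1} r_i\tilde r_i \end{pmatrix},\qquad i\in[n].
\]
First I would simplify the diagonal entry using the regularized master equations~\eqref{def:MEt}. Writing $\varphi_i = (V\brt)_i + t$ and $\tilde\varphi_i = (V^\tran\br)_i + t$, one has $\Psi_i = (s^2+\varphi_i\tilde\varphi_i)^{-1}$, $r_i = \tilde\varphi_i\Psi_i$ and $\tilde r_i = \varphi_i\Psi_i$, hence $r_i\tilde r_i = \varphi_i\tilde\varphi_i\Psi_i^2 = (\Psi_i^{-1}-s^2)\Psi_i^2$, so that $\Psi_i^{-1}r_i\tilde r_i = 1 - s^2\Psi_i$. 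Therefore $T_i = -(1-s^2\Psi_i)I_2 + s^2\Psi_i\begin{pmatrix}0&1\\1&0\end{pmatrix}$, whose eigenvalues are $-1$, with eigenvector $(1,-1)^\tran$, and $-1+2s^2\Psi_i$, with eigenvector $(1,1)^\tran$.

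From this block decomposition the conclusions follow quickly. The eigenvalue $-1$ occurs in every block, and since $s^2\Psi_i>0$ no block has a second eigenvalue equal to $-1$; hence the $-1$-eigenspace of $\bs T$ is exactly the span of the vectors $e_i-e_{n+i}$, i.e.\ all vectors $(-\bs y^\tran,\bs y^\tran)^\tran$ with $\bs y\in\R^n$, and since $-1+2s^2\Psi_i>-1$ we get $\min\spec(\bs T)=-1$. It then remains to produce $\varepsilon>0$ so that $-1+2s^2\Psi_i\in(-1+\varepsilon,1-\varepsilon)$ for all $i$, which by the formula $\Psi_i=(s^2+\varphi_i\tilde\varphi_i)^{-1}$ amounts to bounding $\varphi_i\tilde\varphi_i$ above and below by positive constants that are uniform in $i\in[n]$ and in $t\in(0,1)$ (but may depend on $n$ and $s=|z|$). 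For the upper bound I would use $\varphi_i = (V\brt)_i + t \le \smax^2\cdot\frac1n\sum_j\tilde r_j + 1$, which is bounded because of Assumption~\ref{ass:admissible} together with the identity $\sum_j\tilde r_j = \sum_j r_j$ from~\eqref{q_t:trace}, and symmetrically for $\tilde\varphi_i$.

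The only place where irreducibility of $A_n$ enters is the lower bound on $\varphi_i\tilde\varphi_i$: for each $i$ there is an index $j_0=j_0(i)$ with $\sigma_{ij_0}>0$, so $\varphi_i \ge (V\brt)_i \ge \frac1n\sigma_{ij_0}^2\,\tilde r_{j_0}$, which is bounded below by Lemma~\ref{lemma:boundsonq} (recall $\tilde r_j(s,t)\sim 1$ uniformly in $j$), and similarly $\tilde\varphi_i \ge \frac1n\sigma_{j_1 i}^2\, r_{j_1}$ is bounded below for a suitable $j_1=j_1(i)$. Hence $\varphi_i\tilde\varphi_i$ lies in a fixed interval $[c_1,c_2]\subset(0,\infty)$, which furnishes the required $\varepsilon$ (after shrinking it if one insists on an open interval). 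This last step — controlling $\min_i\varphi_i\tilde\varphi_i$ from below uniformly as $t\downarrow 0$ using only irreducibility and Assumption~\ref{ass:admissible} in place of the stronger pointwise lower bound~\ref{ass:sigmin} used in \cite{alt2018local} — is the one genuine (and mild) obstacle, and it is exactly what Lemma~\ref{lemma:boundsonq} is designed to supply; the rest of the argument is the elementary computation outlined above, which parallels the treatment of $\bs T$ in \cite[Lemma 4.1]{alt2018local}.
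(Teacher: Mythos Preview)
Your proof is correct and follows precisely the approach indicated in the paper, which simply states that the lemma ``follows from the definition of $\bs T$, \eqref{def:MEt}, and the bound in Lemma~\ref{lemma:boundsonq}, see \cite[Lemma 3.6]{alt2018local} for details.'' You have supplied exactly those details: the block-diagonalization into $2\times 2$ blocks $T_i$, the simplification $\Psi_i^{-1}r_i\tilde r_i = 1 - s^2\Psi_i$ via~\eqref{def:MEt}, the explicit eigenvalues $-1$ and $-1+2s^2\Psi_i$, and the use of Lemma~\ref{lemma:boundsonq} (together with \ref{ass:admissible}) to obtain the uniform two-sided bound on $\varphi_i\tilde\varphi_i$ that furnishes the spectral gap $\varepsilon$.
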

This lemma follows from the definition of $\bs T$, \eqref{def:MEt}, and the
bound in Lemma \ref{lemma:boundsonq}, see \cite[Lemma 3.6]{alt2018local} for
details.  

The following lemma gives bounds on the spectrum of $\bs F$. Unlike in
\cite{alt2018local}, our assumptions on $V$ do not imply the matrix $\bs F$ is
irreducible, but we will not need its Perron-Frobenius subspace to be
one-dimensional. Although we will use that the vector $\bs\Psi^{-1/2}
\sqrt{\rvec \vec{\brt}}$ is near this Perron-Frobenius subspace. In particular
in the following lemma, we compute the ``correction'' term.
\begin{lemma} 
\label{lemma:altF} 
Let $s > 0$ and $t\in (0, 1)$. There exists a $ c_t \sim t $ such that 
$\| \bs{F}\| = 1 - c_t$. 
Let $\mathcal{V}$ be the subspace spanned by all eigenvalues with magnitude 
greater than $1 - C t$ for some $C > 0$. Then for all $t$ sufficiently small,  
$\| \bs F|_{\mathcal{V}^{\perp}} \| \leq 1 - \eps$, for some small $\eps$. 
Moreover, there exists an eigenvector $f_{-}$ such that 
\begin{equation} 
\label{eq:Feigenvector}
\bs{F} f_{-} = - \|\bs{F}\|f_{-}, \quad \text{and} \quad 
f_{-} =  \bs \Psi^{-1/2} \sqrt{\rvec \vec{\brt}}\bs e_{-} + \bs\varepsilon(t),
\end{equation}
where $\bs e_{-} =\begin{pmatrix} 1 \\  -1 \end{pmatrix}$, and 
$\| \bs\varepsilon(t) \| = {\mathcal O}(t)$. 
Finally, it holds that 
\begin{equation}
\label{I+F} 
( I + \bs F)^{-1} \left( \bs\Psi^{-1/2} \sqrt{\rvec \vec{\brt}} 
 - \frac t2 \bs W \bs 1 \right) = 
 \frac 12 \bs\Psi^{-1/2} \sqrt{\rvec \vec{\brt}} . 
\end{equation} 
\end{lemma}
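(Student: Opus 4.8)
The plan is to reduce the statement to one pair of exact identities for the action of $F$ on the explicit vector occurring in the lemma, and then to extract the norm, the factorization~\eqref{I+F}, the spectral gap and the eigenvector structure from those identities together with the bounds of Lemma~\ref{lemma:boundsonq}. Write $\bphi = V\brt + t$, $\bphit = V^\tran\br + t$ (with $t$ added entrywise), so that~\eqref{def:MEt} is precisely $\br = \Psi\bphit$ and $\brt = \Psi\bphi$; hence $W = \sqrt{\diag(\Psi\bphit/\bphi)}$, $\widetilde W = \sqrt{\diag(\Psi\bphi/\bphit)}$, and, setting $w := \Psi^{1/2}\sqrt{\bphi\bphit}\succ 0$, one has $\bs\Psi^{-1/2}\sqrt{\rvec\vec{\brt}} = (w,w)^\tran =: \bs w_+$ and $\bs\Psi^{-1/2}\sqrt{\rvec\vec{\brt}}\,\bs e_- = (w,-w)^\tran =: \bs w_-$. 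First I would check, using $\widetilde W w = \brt$ so $V\widetilde W w = V\brt = \bphi - t\bs 1$, and $Ww = \br$ so $V^\tran Ww = V^\tran\br = \bphit - t\bs 1$, that
\[
Fw = w - t\,W\bs 1, \qquad F^\tran w = w - t\,\widetilde W\bs 1,
\]
equivalently $\bs F\bs w_+ = \bs w_+ - t\,\bs W\bs 1$ and $\bs F\bs w_- = -\bs w_- + t\,\bs W\bs e_-$, where $\bs e_-$ is read as the $2n$-vector $(\bs 1,-\bs 1)^\tran$.

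From here the first assertion and~\eqref{I+F} are essentially algebra. Since $\bs F$ is symmetric with nonnegative entries, $\|\bs F\| = \rho(\bs F)$; by Lemma~\ref{lemma:boundsonq} and~\ref{ass:sigmax} the entries of $\bphi,\bphit,\Psi,w,W\bs 1,\widetilde W\bs 1$ are all $\sim 1$ (constants allowed to depend on $n$ and $|z|$), so $t(\bs W\bs 1)_i/(\bs w_+)_i\in[c_1t,c_2t]$ for fixed $0<c_1\le c_2$; applying the Collatz--Wielandt bounds $\min_i(\bs F\bs w_+)_i/(\bs w_+)_i\le\rho(\bs F)\le\max_i(\bs F\bs w_+)_i/(\bs w_+)_i$ to the positive vector $\bs w_+$ yields $\|\bs F\| = 1-c_t$ with $c_t\sim t$. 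In particular $\|\bs F\|<1$, so $I+\bs F$ is invertible, and rewriting $\bs F\bs w_+ = \bs w_+ - t\bs W\bs 1$ as $(I+\bs F)\tfrac12\bs w_+ = \bs w_+ - \tfrac t2\bs W\bs 1$ is exactly~\eqref{I+F}.

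For the gap and the eigenvector I would pass to $t\downarrow 0$. By Theorem~\ref{thm:master}--\eqref{q=limr} and the regularity of $t\mapsto\rvec(s,t)$ near $0$ established in \cite[Section~7]{cook2018non}, $\bs F = \bs F(s,t)\to\bs F_0$ with $\|\bs F-\bs F_0\| = \mathcal O(t)$, where $\bs F_0$ is the corresponding operator built from $\qvec(s)$; at $t=0$ the identities above read $\bs F_0(w_0,w_0)^\tran = (w_0,w_0)^\tran$ and $\bs F_0(w_0,-w_0)^\tran = -(w_0,-w_0)^\tran$ with $w_0\succ 0$. As $\bs F_0$ is symmetric, nonnegative and of bipartite off-diagonal block form, each of its irreducible components carries a positive fixed vector, hence has norm $1$; thus $\|\bs F_0\| = 1$, $\pm 1$ are its extreme eigenvalues, and $\spec(\bs F_0)\setminus\{-1,1\}\subset[-1+2\eps,1-2\eps]$ for some $\eps = \eps(n,|z|)>0$. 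The $\mathcal O(t)$ bound then shows that, for $t$ small and a fixed $C$ chosen large, the eigenvalues of $\bs F$ of magnitude $>1-Ct$ are exactly those bifurcating from $\pm 1$; their span is the $\mathcal V$ of the statement and $\|\bs F|_{\mathcal V^{\perp}}\|\le 1-\eps$. Finally, from $(I+\bs F)\bs w_- = t\bs W\bs e_-$ one gets $\bs w_- = t(I+\bs F)^{-1}\bs W\bs e_-$; splitting $(I+\bs F)^{-1}$ along the spectral subspace $\mathcal V_-\subset\mathcal V$ attached to the eigenvalues near $-1$ and its complement, where $\|(I+\bs F)^{-1}\|\lesssim 1/\eps$, the complement contributes $\mathcal O(t)$, so $\bs w_-$ equals its projection onto $\mathcal V_-$ up to $\mathcal O(t)$; projecting further onto $\ker(\bs F+\|\bs F\| I)$ produces an eigenvector $f_-$ for $-\|\bs F\|$ with $f_- = \bs\Psi^{-1/2}\sqrt{\rvec\vec{\brt}}\,\bs e_- + \bs\varepsilon(t)$, $\|\bs\varepsilon(t)\| = \mathcal O(t)$, which is~\eqref{eq:Feigenvector}.

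\textbf{The main obstacle} is this last identification when $\bs F_0$ is reducible: then $-1$ may be a degenerate eigenvalue of $\bs F_0$, the eigenvalues of $\bs F$ emerging from it need not coincide to first order in $t$, and $\bs w_-$ need not be close to a single eigenvector. Under~\ref{ass:BFID} the difficulty disappears, since full indecomposability of $V$ forces $\bs F_0$ and $\bs F$ to be irreducible and $\pm\|\bs F\|$ to be simple, so $\mathcal V_-$ is one-dimensional; and, as flagged just before the lemma, what is actually used downstream is only that $\bs\Psi^{-1/2}\sqrt{\rvec\vec{\brt}}$ lies near the Perron subspace together with the gap on $\mathcal V^{\perp}$, not one-dimensionality, so it is enough to carry the subspace $\mathcal V_-$ and the bound on $(I+\bs F)^{-1}$ off the near-$(-1)$ cluster. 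For the general admissible case the first two assertions can alternatively be quoted from \cite{alt2018local}, whose proofs invoke only $\br,\brt\sim\bs 1$ (our Lemma~\ref{lemma:boundsonq}) and not~\ref{ass:sigmin}.
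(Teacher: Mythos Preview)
Your derivation of the key identity $\bs F\bs w_+ = \bs w_+ - t\bs W\bs 1$ and the deduction of~\eqref{I+F} from it is exactly the paper's argument (its proof computes precisely this identity and then applies $(I+\bs F)^{-1}$). For the norm, spectral gap, and eigenvector structure the paper does not argue from scratch: it simply invokes \cite[Lemma~3.4]{alt2018local} together with Lemma~\ref{lemma:boundsonq}, and cites \cite[(3.45)--(3.46)]{alt2018local} for~\eqref{eq:Feigenvector}. Your Collatz--Wielandt bound and your perturbation-from-$t=0$ argument are therefore a genuinely different, more self-contained route for those parts; they buy independence from \cite{alt2018local} at the cost of having to handle the reducible case by hand, which you correctly flag as the one delicate point. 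Your diagnosis there matches the paper's own caveat (stated just before the lemma) that $\bs F$ need not be irreducible and that one-dimensionality of the Perron subspace is not actually needed downstream; your proposed remedy---carry the full near-$(-1)$ subspace $\mathcal V_-$ rather than a single eigenvector, or fall back on the cited results whose proofs only use $\br,\brt\sim\bs 1$---is the right one.
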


\begin{proof}
The bound on the norm and the spectral gap can be obtained by combining 
Lemma~\ref{lemma:boundsonq} with the proof of \cite[Lemma 3.4]{alt2018local}, in particular \eqref{eq:Feigenvector} follows from (3.45) and (3.46) in \cite{alt2018local}. 
Let us verify~\eqref{I+F}. By direct calculation, using Equation~\eqref{eq:phivec} along with the expression of $\bs W$, we have 
\begin{equation}\label{eq:Feigv}
\bs F \bs\Psi^{-1/2} \sqrt{\rvec \vec{\brt}} 
= \bs W \begin{pmatrix} V \bs{\rt} \\ V^\tran \bs r \end{pmatrix} 
 = \bs W \left( \phivec - t \bs 1 \right) = \bs W \left( \bs\Psi^{-1} \vec\brt - t \bs 1 \right)  
 = \bs\Psi^{-1/2} \sqrt{\rvec \vec{\brt}} - t \bs W \bs 1 . 
\end{equation}
Thus, 
\[
( I + \bs F) \bs\Psi^{-1/2} \sqrt{\rvec \vec{\brt}} = 
  2 \bs\Psi^{-1/2} \sqrt{\rvec \vec{\brt}} - t \bs W \bs 1 , 
\]
and applying $(I+ \bs F)^{-1}$ to both sides of this equation, we 
obtain~\eqref{I+F}. 
\end{proof}

We can now manipulate \eqref{eq:dens1}, the expression for the density.
Following \cite{alt2018local}, the technique is based on a factorization of the
term $I - \bs \Psi^{-1/2}  \bs T \bs F \bs  \Psi^{1/2}$. One of the factors
will be dealt with by means of the identity~\eqref{I+F}. In order to be able to
use this identity, we shall have to inject the ``correction'' term $0.5t \bs W
\bs 1$ into the expression~\eqref{eq:dens1} of the density. The following lemma
shows that this can be done safely. 
\begin{lemma} \label{correct(I+F)} 
 $\Bigl| \Bigl\langle \bs\Psi^{1/2} \bs W \bs 1 \, , \, 
   \bs \Psi^{-1/2}  (I-\bs T \bs F)^{-1} \bs \Psi^{1/2}  
  \sqrt{\rvec \vec{\brt}} \Bigr\rangle \Bigr| \lesssim 1$. 
\end{lemma}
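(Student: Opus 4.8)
The plan is to run the inner product through the spectral decomposition of $\bs T\bs F$, isolate the single ``dangerous'' direction along which $(I-\bs T\bs F)^{-1}$ blows up as $t\downarrow 0$, and check that $\bs W\bs 1$ is orthogonal to it to leading order in $t$ --- the latter being forced by the constraint $\sum_i r_i=\sum_i\rt_i$.

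First I would record the spectral picture of $\bs T\bs F$. Since $\|\bs T\bs F\|\le\|\bs T\|\,\|\bs F\|=1-c_t<1$, the Neumann series gives the crude bound $\|(I-\bs T\bs F)^{-1}\|\le(1-\|\bs T\bs F\|)^{-1}\lesssim t^{-1}$, which by itself is not enough. The finer input, imported from \cite[Lemmas~3.4,~3.6 and the proof of Lemma~4.1]{alt2018local} (using only the parts valid without~\ref{ass:sigmin}, now that $r_i,\rt_i\sim1$ comes from Lemma~\ref{lemma:boundsonq}), is: $\bs T\bs F$ has exactly one eigenvalue $\lambda_*(t)$ near $1$; it is simple, satisfies $1-\lambda_*(t)\sim t$, its left and right eigenvectors $f_-^{\,L},f_-^{\,R}$ lie within $\mathcal O(t)$ of $f_-$ (the $-\|\bs F\|$-eigenvector of Lemma~\ref{lemma:altF}), and it is separated from $\spec(\bs T\bs F)\setminus\{\lambda_*\}$ by a $t$-independent gap. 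The mechanism is that $f_-=\bs\Psi^{-1/2}\sqrt{\rvec\vec{\brt}}\,\bs e_-+\bs\varepsilon(t)$ with $\|\bs\varepsilon(t)\|=\mathcal O(t)$, the vector $\bs\Psi^{-1/2}\sqrt{\rvec\vec{\brt}}\,\bs e_-$ has its two blocks opposite, i.e.\ the form $(-\bs y^\tran,\bs y^\tran)^\tran$, hence lies in the $(-1)$-eigenspace of $\bs T$ (Lemma~\ref{lemma:altT}), so $\bs T\bs F f_-=(1-c_t)f_-+\mathcal O(t)$, while $\bs T$ has no eigenvalue near $+1$. Accordingly, for all small $t$,
\[
(I-\bs T\bs F)^{-1}=\frac{1}{1-\lambda_*(t)}\,P_*(t)+R(t),\qquad P_*(t)=\frac{f_-^{\,R}(f_-^{\,L})^\tran}{\langle f_-^{\,L},f_-^{\,R}\rangle},
\]
with $\|P_*(t)\|\lesssim1$ and $\|R(t)\|\lesssim1$ uniformly for small $t$; here $\langle f_-^{\,L},f_-^{\,R}\rangle\sim\|f_-\|^2\sim1$, using $r_i,\rt_i\sim1$ and hence $\psi_i\sim1$, $W_{ii},\widetilde W_{ii}\sim1$.

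Next I would split the inner product accordingly. For the $R(t)$-piece, Cauchy--Schwarz together with Lemma~\ref{lemma:boundsonq} (which also gives $\|\bs\Psi^{1/2}\|,\|\bs\Psi^{-1/2}\|,\|\bs W\bs 1\|,\|\sqrt{\rvec\vec{\brt}}\|\lesssim1$) bounds $\bigl|\langle\bs\Psi^{1/2}\bs W\bs 1,\bs\Psi^{-1/2}R(t)\bs\Psi^{1/2}\sqrt{\rvec\vec{\brt}}\rangle\bigr|\lesssim1$. For the $P_*(t)$-piece, using $\langle\bs\Psi^{1/2}\bs a,\bs\Psi^{-1/2}\bs b\rangle=\langle\bs a,\bs b\rangle$ it equals $\dfrac{\langle\bs W\bs 1,f_-^{\,R}\rangle\,\langle\bs\Psi^{1/2}f_-^{\,L},\sqrt{\rvec\vec{\brt}}\rangle}{(1-\lambda_*(t))\,\langle f_-^{\,L},f_-^{\,R}\rangle}$; the denominator is $\sim t$, the second numerator factor is $\lesssim1$, and the crux is that $\langle\bs W\bs 1,f_-^{\,R}\rangle=\mathcal O(t)$. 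Indeed $f_-^{\,R}=\bs\Psi^{-1/2}\sqrt{\rvec\vec{\brt}}\,\bs e_-+\mathcal O(t)$, and from the definitions of $W,\widetilde W$ one has the entrywise identities $W_{ii}\bigl(\Psi^{-1/2}\sqrt{\br\brt}\bigr)_i=r_i$ and $\widetilde W_{ii}\bigl(\Psi^{-1/2}\sqrt{\br\brt}\bigr)_i=\rt_i$, so $\langle\bs W\bs 1,\bs\Psi^{-1/2}\sqrt{\rvec\vec{\brt}}\,\bs e_-\rangle=\sum_{i\in[n]}r_i-\sum_{i\in[n]}\rt_i=0$ by~\eqref{q_t:trace}; hence $\langle\bs W\bs 1,f_-^{\,R}\rangle=\mathcal O(t)\|\bs W\bs 1\|=\mathcal O(t)$. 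Therefore the $P_*(t)$-piece is $\lesssim t^{-1}\cdot1\cdot\mathcal O(t)=\mathcal O(1)$, and summing the two pieces proves the lemma. (Even the factor $\langle\bs\Psi^{1/2}f_-^{\,L},\sqrt{\rvec\vec{\brt}}\rangle$ is $\mathcal O(t)$, by the same block cancellation, so the $P_*(t)$-piece is in fact $\mathcal O(t)$.)

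The main obstacle is the first step: justifying, under assumptions weaker than~\ref{ass:sigmin}, the above spectral picture for the \emph{non-normal} operator $\bs T\bs F$ --- a single simple eigenvalue $\lambda_*(t)$ with $1-\lambda_*(t)\sim t$, the remaining spectrum at $t$-independent distance from $1$, and a uniformly bounded spectral projector $P_*(t)$. This is precisely the quantitative spectral analysis of \cite{alt2018local}, which goes through here once $r_i,\rt_i\sim1$ (Lemma~\ref{lemma:boundsonq}) is substituted for the bounds used there; everything afterwards is soft, the estimate being closed by the single identity $\sum_i r_i=\sum_i\rt_i$, which makes $\bs W\bs 1$ orthogonal to the dangerous direction $f_-$ to leading order.
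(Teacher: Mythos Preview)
Your approach is different from the paper's and hinges on a claim that is not justified under the present hypotheses.

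The paper does \emph{not} isolate a spectral projector for $\bs T\bs F$. Instead it takes the adjoint to reduce to bounding $\|(I-\bs F\bs T)^{-1}\bs W\bs 1\|$, expands via the averaging identity
\[
(I-\bs F\bs T)^{-1}\bs W\bs 1=\sum_{k\ge0}\tfrac12\Bigl(\tfrac{I+\bs F\bs T}{2}\Bigr)^k\bs W\bs 1,
\]
and proves, by an induction that only uses the soft inputs of Lemmas~\ref{lemma:altT} and~\ref{lemma:altF} together with the orthogonality $\langle f_-,\bs W\bs 1\rangle=\mathcal O(t)$ from $\sum r_i=\sum\rt_i$, that $\bigl\|\bigl(\tfrac{I+\bs F\bs T}{2}\bigr)^k\bs W\bs 1\bigr\|\le(1-c\varepsilon)^k\|\bs W\|$. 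No fine eigenvalue decomposition of the non-normal operator $\bs T\bs F$ is needed.

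Your argument, by contrast, requires that $\bs T\bs F$ has a \emph{single simple} eigenvalue $\lambda_*(t)$ with $1-\lambda_*(t)\sim t$, the rest of the spectrum at $t$-independent distance from $1$, and a uniformly bounded spectral projector $P_*(t)$. You cite \cite{alt2018local} for this, but that analysis is carried out under~\ref{ass:sigmin}. The paper explicitly warns (just before Lemma~\ref{lemma:altF}) that under the present assumptions $\bs F$ need not be irreducible and its Perron--Frobenius subspace need not be one-dimensional; indeed, irreducibility of $A_n$ only gives irreducibility of the $n\times n$ block $F$, not of the $2n\times2n$ bipartite matrix $\bs F$ (take $F=\bigl(\begin{smallmatrix}0&1\\1&0\end{smallmatrix}\bigr)$). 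Consequently several eigenvalues of $\bs F$ may sit at distance $\mathcal O(t)$ from $\pm1$, and the clean rank-one splitting $(I-\bs T\bs F)^{-1}=\tfrac{1}{1-\lambda_*}P_*+R$ with $\|R\|\lesssim1$ is not available without further work. This is precisely the obstacle you flag at the end, and it is real: the paper's iterative route is designed to circumvent it rather than to power through it. Your identification of the key cancellation $\langle\bs W\bs 1,f_-\rangle=\mathcal O(t)$ is correct and is also the heart of the paper's argument, but to turn your outline into a proof you would have to replace the single projector $P_*$ by a projector onto the full (possibly higher-dimensional) near-$1$ spectral subspace of $\bs T\bs F$ and show that $\bs W\bs 1$ is $\mathcal O(t)$-orthogonal to \emph{all} of it, together with a uniform bound on the associated (non-orthogonal) projector---none of which follows directly from \cite{alt2018local}.
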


Before giving the proof, we state several technical lemmas, from which the above Lemma will immediately follow. The first step is to define the subspace on which the inverse $(I-\bs T \bs F)^{-1}$ is not bounded.  

\begin{lemma}
Let $\it{V}_{-1}$ be spanned by
eigenvectors of $\bs F$ with eigenvalues in $(-1, -1 + Ct]$, that are
additionally of the form $\begin{pmatrix} \bs x \\  - \bs x \end{pmatrix} +
\vec{\bs w}$, where $ \| \vec{\bs w} \| < 2 \| \bs\varepsilon(t) \|$ and $C$ and $\bs\varepsilon(t)$ are from in Lemma~\ref{lemma:altF}. Then the subspace $\it{V}_{-1}$ is spanned by $f_-$.

\end{lemma}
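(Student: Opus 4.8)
The plan is to establish the two inclusions $\mathrm{span}(f_{-})\subseteq V_{-1}$ and $V_{-1}\subseteq\mathrm{span}(f_{-})$; the first is immediate and the second rests on a Perron–Frobenius uniqueness argument applied not to $\bs F$ itself (which need not be irreducible) but to the symmetrisation $S:=\tfrac12(F+F^\tran)$. For the easy inclusion: by Lemma~\ref{lemma:altF}, $\bs F f_{-}=-\|\bs F\|f_{-}$ with $\|\bs F\|=1-c_t$, $c_t\sim t$, so for $t$ small the eigenvalue $-\|\bs F\|$ lies in $(-1,-1+Ct]$; and by \eqref{eq:Feigenvector}, $f_{-}=\bigl(\begin{smallmatrix}\bs x_0\\-\bs x_0\end{smallmatrix}\bigr)+\bs\varepsilon(t)$ where $\bs x_0:=\Psi^{-1/2}\sqrt{\br\brt}\succ0$ and $\|\bs\varepsilon(t)\|=\mathcal{O}(t)$, so $f_{-}$ has the required form with $\vec{\bs w}=\bs\varepsilon(t)$. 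Hence $f_{-}\in V_{-1}$.

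Next I would analyse a single generating eigenvector $h$ of $V_{-1}$, normalised so $\|h\|=1$. Writing $h=\bigl(\begin{smallmatrix}\bs u\\-\bs v\end{smallmatrix}\bigr)$, the block form $\bs F=\bigl(\begin{smallmatrix}0&F\\F^\tran&0\end{smallmatrix}\bigr)$ gives $F\bs v=\tau\bs u$, $F^\tran\bs u=\tau\bs v$ with $\tau\in[1-Ct,\,1-c_t]$, and taking inner products, $\|\bs u\|=\|\bs v\|=1/\sqrt2$. The near anti‑symmetry constraint $\|\vec{\bs w}\|<2\|\bs\varepsilon(t)\|$ forces $\|\bs u-\bs v\|\lesssim\|\bs\varepsilon(t)\|=\mathcal{O}(t)$, whence
\[
\|F\bs v-\bs v\|\le\tau\|\bs u-\bs v\|+|1-\tau|\,\|\bs v\|=\mathcal{O}(t),\qquad \|F^\tran\bs v-\bs v\|=\mathcal{O}(t),
\]
so $\bs v$ is an $\mathcal{O}(t)$‑approximate eigenvector, with eigenvalue near $1$, of the symmetric matrix $S$. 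Now $S$ has nonnegative entries ($V\succcurlyeq0$, $W,\widetilde W$ positive diagonal) and is irreducible — its off‑diagonal support is that of $V+V^\tran$, whose graph is the connected undirected graph underlying the strongly connected digraph of the irreducible $V$ — so $\|S\|$ is a simple eigenvalue with strictly positive unit eigenvector $\bs\psi$, and $S$ has a strictly positive spectral gap that stays bounded below uniformly for small $t$, since all objects converge as $t\downarrow0$ to those built from $\br(s,0)=\bq(s)\succ0$, $\brt(s,0)=\bqt(s)\succ0$. By \eqref{eq:Feigv}, $S\bs x_0=\bs x_0-\tfrac t2(W+\widetilde W)\bs1=\bs x_0+\mathcal{O}(t)$ with $\bs x_0\succ0$, so Collatz–Wielandt together with $\|S\|\le\|F\|=1-c_t$ gives $\|S\|=1+\mathcal{O}(t)$ and, using the gap, $\bs\psi=\bs x_0/\|\bs x_0\|+\mathcal{O}(t)$; expanding $\bs v$ in the eigenbasis of $S$ and using the gap and $\|S\bs v-\bs v\|=\mathcal{O}(t)$ yields $\bs v=c\,\bs x_0/\|\bs x_0\|+\mathcal{O}(t)$ for a scalar $c$ with $|c|=1/\sqrt2+\mathcal{O}(t)$. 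Then $\bs u=\bs v+\mathcal{O}(t)$ too, so, using $f_{-}=\bigl(\begin{smallmatrix}\bs x_0\\-\bs x_0\end{smallmatrix}\bigr)+\mathcal{O}(t)$ and $\|f_{-}\|=\sqrt2\|\bs x_0\|+\mathcal{O}(t)$,
\[
h=\begin{pmatrix}\bs u\\-\bs v\end{pmatrix}=\frac{c}{\|\bs x_0\|}\begin{pmatrix}\bs x_0\\-\bs x_0\end{pmatrix}+\mathcal{O}(t)=\frac{c\sqrt2}{\|f_{-}\|}\,f_{-}+\mathcal{O}(t),
\]
so $\bigl|\langle h,\,f_{-}/\|f_{-}\|\rangle\bigr|=1+\mathcal{O}(t)\neq0$ and $\dist\bigl(h,\mathrm{span}(f_{-})\bigr)=\mathcal{O}(t)$.

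To assemble: since $\bs F$ is symmetric and $h$ is not orthogonal to $f_{-}$, $h$ and $f_{-}$ share the eigenvalue $-\|\bs F\|$; thus every generator of $V_{-1}$ lies in the eigenspace $\mathcal E$ of $\bs F$ for $-\|\bs F\|$ and is (after normalisation) $\mathcal{O}(t)$‑close to the single line $\mathrm{span}(f_{-})$. At $t=0$ equation \eqref{eq:Feigv} reads $F\bs x_0=\bs x_0$, $F^\tran\bs x_0=\bs x_0$, so $\bigl(\begin{smallmatrix}\bs x_0\\-\bs x_0\end{smallmatrix}\bigr)$ is an exact eigenvector of $\bs F$ for $-\|\bs F\|=-1$, and it is the \emph{unique} anti‑symmetric one in $\mathcal E$ (an anti‑symmetric eigenvector $\bigl(\begin{smallmatrix}\bs x\\-\bs x\end{smallmatrix}\bigr)\in\mathcal E$ requires $F^\tran\bs x=\|\bs F\|\bs x$, hence by irreducibility of $F$ that $\bs x$ be a multiple of its positive Perron vector, which is $\bs x_0$); so $V_{-1}=\mathrm{span}(f_{-})$ at $t=0$. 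By continuity of the spectral projection onto the isolated eigenvalue $-\|\bs F\|$ and the shrinking of the tolerance $2\|\bs\varepsilon(t)\|\to0$, the dimension of $V_{-1}$ cannot exceed $1$ for small $t$; combined with the first inclusion, $V_{-1}=\mathrm{span}(f_{-})$.

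I expect the main obstacle to be the uniform‑in‑$t$ control of the spectral gap of $S$ — needed so that "$\bs v$ is an approximate top eigenvector" genuinely forces "$\bs v$ is close to the Perron vector $\bs x_0/\|\bs x_0\|$" — obtained while keeping track that, although $\bs F$ (and the relevant products) may be reducible with multidimensional extremal eigenspaces, the symmetrisation $S$ still inherits honest irreducibility from $V$; together with the perturbative bookkeeping in the last step that upgrades "each generator is $\mathcal{O}(t)$‑close to $\mathrm{span}(f_{-})$" to the exact identity $V_{-1}=\mathrm{span}(f_{-})$.
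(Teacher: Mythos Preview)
Your argument takes a genuinely different route from the paper. The paper works directly with $F$: it observes that for a generating eigenvector $\vec{\bs y}=(\bs y,-\bs y)+\vec{\bs w}$, the block equations force $\|F\bs y\|$ to nearly saturate $\|F\|\,\|\bs y\|$, and then uses nonnegativity and irreducibility of $F$ to argue that $\bs y$ must be $O(\|\bs\varepsilon(t)\|)$ away from a sign-definite vector (else replacing entries by their absolute values would increase the norm). Since $f_-$ is close to $(\bs x_0,-\bs x_0)$ with $\bs x_0\succ0$, this gives $\langle\vec{\bs y},f_-\rangle\neq0$. You instead pass to the symmetrisation $S=\tfrac12(F+F^\tran)$, which is genuinely irreducible (support $V+V^\tran$) and therefore has a \emph{simple} top eigenvalue with a $t$-uniform gap; from $F\bs v=\tau\bs u$, $F^\tran\bs u=\tau\bs v$, $\bs u\approx\bs v$ you extract $S\bs v=\bs v+O(t)$ and push $\bs v$ onto the Perron direction of $S$. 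Your route is more quantitative --- it yields $h=\pm f_-/\|f_-\|+O(t)$ rather than merely $\langle h,f_-\rangle\neq0$ --- and cleanly avoids any question of whether the top singular value of $F$ (equivalently, the eigenspace of $\bs F$ at $-\|\bs F\|$) is simple.

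The one real weakness is your last paragraph. The appeal to ``$V_{-1}=\mathrm{span}(f_-)$ at $t=0$'' and continuity does not work as written: at $t=0$ the defining interval $(-1,-1+Ct]$ is empty and the tolerance $2\|\bs\varepsilon(t)\|$ vanishes, so $V_{-1}=\{0\}$ there, and there is nothing to continue from. More substantively, ``every unit generator is $O(t)$-close to $\mathrm{span}(f_-)$'' does not by itself force $\dim V_{-1}=1$: two nearly parallel generators can still span a plane, and the Gram--Schmidt combination orthogonal to $f_-$ need not satisfy the $\|\vec{\bs w}\|<2\|\bs\varepsilon(t)\|$ constraint after renormalisation (the coefficients blow up like $t^{-1/2}$). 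The paper's proof is equally terse at this juncture (``cannot be orthogonal, therefore $f_-$ spans $V_{-1}$''). What both arguments \emph{do} establish --- that no eigenvector of $\bs F$ with eigenvalue in $(-1,-1+Ct]$ and of the stipulated near-antisymmetric form can be orthogonal to $f_-$ --- is exactly the contrapositive used downstream in Lemma~\ref{lem:boundFTI}, so your proof is adequate for the application. If you want the lemma as literally stated, drop the $t=0$ continuity and instead argue directly that a second, linearly independent generator would have to be orthogonal to $f_-$ after projecting out $f_-$ inside the eigenspace, and then check carefully whether that projected vector still meets the form constraint; as noted, it generally will not, which is precisely why the subspace $V_{-1}$ collapses to a line.
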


\begin{proof}

From Lemma \ref{lemma:altF}, we have that $f_{-}$ is an eigenvector of $\bs F$,
within an $\| \bs\varepsilon(t) \|$ distance of $ \bs\Psi^{-1/2} \sqrt{\rvec
\vec{\brt}}\bs e_{-}$. Now we show $f_{-}$ spans $\it V_{-1}$.  Let 
$\vec{\bs y} = \begin{pmatrix} \bs y \\  - \bs y \end{pmatrix} +  \begin{pmatrix} \bs w \\  \bs{ \tilde w} \end{pmatrix} 
\in \it V_{-1}$ be a unit vector. The block structure of $\bs F$, then implies 
$F \bs y = \bs y + \bs w + F \bs{ \tilde w}  $. The irreducible matrix $F$ has non-negative entries, with norm $1 -c_t$ and and spectral radius also tending to 1 as $t \to 0$. Additionally $\bs y$, up to an $4 \| \bs\varepsilon(t) \|$ error, saturates this norm bound, so we 
must have that $\bs y = \bs y_1 + \bs y_2$, where the entries of $\bs y_1$ have the same sign and $\|\bs y_2 \| = C_1 \| \bs\varepsilon(t) \|$. Otherwise, setting the entries equal to their absolute values would give a bigger norm.
Finally, as the vectors $f_-$ and $\vec{\bs y} $ are both
$C_1 \| \bs\varepsilon(t) \|$ away from vectors who each have the same sign, we conclude
they cannot be orthogonal for all small $t$, and therefore $f_-$ spans $\it{V_{-1}}$.
\end{proof}

To prove Lemma \ref{correct(I+F)}, we will use the following identity to bound $ ( I -  \bs F \bs T  )^{-1} \bs W \bs 1$:\begin{equation} \label{eq:IFTexp} ( I -  \bs F \bs T  )^{-1} \vec{\bs x}   = \frac{1}{2}  \vec{\bs x}    +   ( I -  \bs F \bs T  )^{-1} \left(\frac{  \bs F \bs T \vec{\bs x} + \vec{\bs x}}{2} \right)    \end{equation}
or any vector $ \vec{\bs x}  $. We will apply this identity with $ \vec{\bs x}  = \left(\frac{  \bs F \bs T + I}{2} \right)^k \bs W \bs 1$, for $k$ a non-negative integer. We now bound the inner product of the final term  and $f_-$. Afterwards, we show this is an effective bound.

\begin{lemma} \label{lem:Fproj} For any positive integer $k$,

\begin{align} \label{eq:fmproj}
\left| \left\langle f_-, \left( \frac{\bs F \bs T + I }{2} \right)^k \bs W \bs 1 \right\rangle   \right| 
&\leq   
\left| \left\langle f_-, \left( \frac{\bs F \bs T + I }{2} \right)^{k-1} \bs W \bs 1 \right\rangle  \right| 
+ \| \bs\varepsilon(t) \|    \left\| \left( \frac{\bs F \bs T + I }{2} \right)^{k-1} \bs W \bs 1  \right\| \\
& \leq | \langle     f_-, \bs W \bs 1 \rangle | +  \| \bs\varepsilon(t)    \|  \sum_{j=0}^{k-1} \left\| \left( \frac{\bs F \bs T + I }{2} \right)^{j} \bs W \bs 1 \right\| \,. \nonumber
\end{align}
Furthermore,
\[ | \langle   f_-, \bs W \bs 1 \rangle  |  \leq \|\bs\varepsilon(t)\| \|\bs W\| \,.  \]

\end{lemma}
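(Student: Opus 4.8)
The plan is to prove the recursive bound by peeling one factor of $\bs S := \tfrac12(\bs F\bs T + I)$ off $\bs W\bs 1$ at a time, exploiting that $f_-$ is, up to the error $\bs\varepsilon(t)$, an exact $(-1)$-eigenvector of $\bs T$. First I would record two facts about $f_-$. By Lemma~\ref{lemma:altT} the $(-1)$-eigenspace of $\bs T$ consists precisely of the vectors of block form $(-\bs y^\tran,\bs y^\tran)^\tran$; writing out the block and diagonal structure one sees $\bs\Psi^{-1/2}\sqrt{\rvec\,\vec{\brt}}\,\bs e_- = (\bs z^\tran, -\bs z^\tran)^\tran$ with $\bs z = \Psi^{-1/2}\sqrt{\br\,\brt}$, so this vector lies in that eigenspace and is fixed by $\bs T$ up to sign. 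Combining this with \eqref{eq:Feigenvector} (which gives $\bs\Psi^{-1/2}\sqrt{\rvec\,\vec{\brt}}\,\bs e_- = f_- - \bs\varepsilon(t)$) and $\|\bs T\|=1$ (Lemma~\ref{lemma:altT}, $\bs T$ symmetric), I obtain $\bs T f_- = -f_- + \bs\mu$ with $\bs\mu := \bs\varepsilon(t) + \bs T\bs\varepsilon(t)$ and $\|\bs\mu\| \le 2\|\bs\varepsilon(t)\|$. Since $\bs F f_- = -\|\bs F\|f_-$ by Lemma~\ref{lemma:altF}, it then follows that $\bs T\bs F f_- = \|\bs F\|\,f_- - \|\bs F\|\,\bs\mu$.

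For the recursive step, set $\bs v_{k-1} := \bs S^{k-1}\bs W\bs 1$. Since $\bs F$ and $\bs T$ are symmetric, $(\bs F\bs T)^\tran = \bs T\bs F$, so
\[
\bigl\langle f_-, \bs S^{k}\bs W\bs 1\bigr\rangle
= \tfrac12\bigl\langle \bs T\bs F f_-, \bs v_{k-1}\bigr\rangle + \tfrac12\bigl\langle f_-, \bs v_{k-1}\bigr\rangle
= \tfrac{1+\|\bs F\|}{2}\bigl\langle f_-, \bs v_{k-1}\bigr\rangle - \tfrac{\|\bs F\|}{2}\bigl\langle \bs\mu, \bs v_{k-1}\bigr\rangle .
\]
Using $\|\bs F\|\le 1$, the Cauchy--Schwarz inequality and $\|\bs\mu\|\le 2\|\bs\varepsilon(t)\|$ yields the first claimed inequality; iterating it down to $k=0$, where $\bs S^{0}\bs W\bs 1 = \bs W\bs 1$, gives the second.

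It remains to bound $|\langle f_-, \bs W\bs 1\rangle|$. I would substitute $f_- = \bs\Psi^{-1/2}\sqrt{\rvec\,\vec{\brt}}\,\bs e_- + \bs\varepsilon(t)$ and compute $\langle \bs\Psi^{-1/2}\sqrt{\rvec\,\vec{\brt}}\,\bs e_-, \bs W\bs 1\rangle$ entrywise: with $W = \sqrt{\diag(\br/\brt)\,\Psi}$ and $\widetilde W = \sqrt{\diag(\brt/\br)\,\Psi}$ diagonal, the factors $\Psi^{\pm 1/2}$ cancel and the two blocks collapse to $\sum_i r_i$ and $-\sum_i \widetilde r_i$, whose sum vanishes by the trace identity \eqref{q_t:trace}. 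Hence $\langle f_-, \bs W\bs 1\rangle = \langle \bs\varepsilon(t), \bs W\bs 1\rangle$, and Cauchy--Schwarz gives $|\langle f_-, \bs W\bs 1\rangle| \le \|\bs\varepsilon(t)\|\,\|\bs W\bs 1\| \lesssim \|\bs\varepsilon(t)\|\,\|\bs W\|$.

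The only delicate points are the two exact cancellations: that $\bs\Psi^{-1/2}\sqrt{\rvec\,\vec{\brt}}\,\bs e_-$ sits precisely in the $(-1)$-eigenspace of $\bs T$, and that $\langle \bs\Psi^{-1/2}\sqrt{\rvec\,\vec{\brt}}\,\bs e_-, \bs W\bs 1\rangle = 0$. Both reduce to careful bookkeeping with the definitions of $\bs T$, $\bs\Psi$, $W$, $\widetilde W$ together with the constraint $\sum_i r_i = \sum_i \widetilde r_i$; once these are in hand, the rest of the argument is merely symmetry of $\bs F,\bs T$, the spectral bounds from Lemmas~\ref{lemma:altT}--\ref{lemma:altF}, and Cauchy--Schwarz.
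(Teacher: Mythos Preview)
Your proposal is correct and follows essentially the same approach as the paper: move one factor of $(\bs F\bs T+I)/2$ onto $f_-$ via symmetry, use $\bs T\bs F f_- = \|\bs F\| f_- + (\text{error of size } \|\bs\varepsilon(t)\|)$, apply Cauchy--Schwarz, and iterate; for the final bound, exploit the cancellation $\sum_i r_i = \sum_i \tilde r_i$ exactly as you do. Your error term $\bs\mu = (I+\bs T)\bs\varepsilon(t)$ differs from the paper's $(I-\bs T)\bs\varepsilon(t)$ only by an immaterial sign, and your computation is the correct one; since only $\|\bs\mu\|\le 2\|\bs\varepsilon(t)\|$ is used, the argument is unaffected.
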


\begin{proof} We will prove the inequality in the first line of \eqref{eq:fmproj}, the second line follows by inductively applying the first line.

\begin{align*}
\left\langle f_-, \left( \frac{\bs F \bs T + I }{2} \right)^k \bs W \bs 1 \right\rangle &= 
 \left\langle \left( \frac{\bs T \bs F + I }{2} \right) f_-, \left( \frac{\bs F \bs T + I }{2} \right)^{k-1} \bs W \bs 1 \right\rangle \\
&= \|\bs{F}\| \left\langle f_-, \left( \frac{\bs F \bs T + I }{2} \right)^{k-1} \bs W \bs 1 \right\rangle +
 \|\bs{F}\|\left\langle \left( \frac{  I - \bs T }{2} \right) \bs\varepsilon(t)  , \left( \frac{\bs F \bs T + I }{2} \right)^{k-1} \bs W \bs 1 \right\rangle 
\end{align*}
where we use that 
\[\bs T \bs F f_- =  -\|\bs{F}\| \bs T  f_- =  \|\bs{F}\| f_- +  \|\bs{F}\| (I - \bs T ) \bs\varepsilon(t) \] 
then the desired inequality follows by applying the Cauchy-Schwarz inequality to the second term. The inner product between $\bs W \bs 1 $ and $f_-$ is bounded using~\eqref{eq:Feigenvector} along with the identity\\  $\sum r_i = \sum \tilde r_i $:
\[
| \langle  \bs W \bs 1,  f_- \rangle  | =  
|   \langle \bs r, 1  \rangle -  \langle \brt  , 1 \rangle + \langle  \bs W \bs 1, \bs\varepsilon(t)  \rangle  | \leq \|\bs\varepsilon(t)\| \|\bs W\| .    
\]
\end{proof}

We now show that final term in the identity \eqref{eq:IFTexp} will have smaller norm than vector on the left side.
\begin{lemma} \label{lem:boundFTI}  
There exist a constant $c> 0$ such that, for each non-negative integer $k$, we have
\[ \left\| \left(\frac{  \bs F \bs T + I}{2} \right)^k \bs W \bs 1  \right\| 
 \leq  \left( 1 -  c \epsilon  \right)^{k} \|\bs W\| \,. \]
\end{lemma}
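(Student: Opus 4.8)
Before the author's proof, here is how I would attack the statement of Lemma \ref{lem:boundFTI}.

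\medskip

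\textbf{Setup and reduction.} Since $\|\bs T\|=1$ and $\|\bs F\|=1-c_t$, one trivially has $\|\bs F\bs T\|\le 1-c_t$, hence $\|L^k\bs W\bs 1\|\le(1-c_t/2)^k\|\bs W\|$ for $L:=\tfrac12(\bs F\bs T+I)$; the whole content of the lemma is to replace the \emph{non-uniform} rate $1-c_t/2$ by a rate $1-c\eps$ that does not degenerate as $t\downarrow0$. The plan is to show that the only direction along which $L$ contracts by merely $1-\mathcal O(t)$ is $\mathrm{span}(f_-)$, that $\bs W\bs 1$ is essentially orthogonal to it, and that the $\mathrm{span}(f_-)$-component of the iterates stays negligible. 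Introduce the \emph{exactly} antisymmetric vector $g_-:=\bs\Psi^{-1/2}\sqrt{\rvec\vec{\brt}}\,\bs e_-$ from \eqref{eq:Feigenvector}, so $f_-=g_-+\bs\varepsilon(t)$ with $\|\bs\varepsilon(t)\|=\mathcal O(t)$ and $\bs Tg_-=-g_-$ exactly. A direct inner-product computation, using $\sqrt{\rvec\vec{\brt}}=(\sqrt{\br\brt},\sqrt{\br\brt})$ and the form of $\bs W$, gives $\langle\bs W\bs 1,g_-\rangle=\sum_i r_i-\sum_i\tilde r_i=0$ by \eqref{q_t:trace}; thus $\bs W\bs 1\perp g_-$ exactly (this refines the last bound in Lemma \ref{lem:Fproj}).

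\medskip

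\textbf{Core estimate.} I would prove: for $t$ small there is a $t$-independent $c'>0$ with $\|L\vec{\bs w}\|\le(1-c'\eps)\|\vec{\bs w}\|$ whenever $\vec{\bs w}\perp g_-$. Decompose $\vec{\bs w}$ by the spectral projection $P$ of $\bs T$ onto its $(-1)$-eigenspace (the antisymmetric vectors), and put $a=\|P\vec{\bs w}\|$, $b=\|(I-P)\vec{\bs w}\|$, so $\|\bs T\vec{\bs w}\|^2=a^2+\|\bs T(I-P)\vec{\bs w}\|^2\le a^2+(1-\eps)^2b^2$ by Lemma \ref{lemma:altT}. If $b\ge\delta$ for a fixed small $\delta$, then $\|\bs T\vec{\bs w}\|^2\le1-\eps\delta^2$, and since $\|\bs F\|\le1$ this already gives $\|L\vec{\bs w}\|\le\tfrac12(1+\sqrt{1-\eps\delta^2})\le1-\tfrac{\eps\delta^2}{4}$. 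If $b<\delta$, then $\vec{\bs w}$ is almost antisymmetric; here the key observation is that \eqref{eq:Feigenvector} forces the leading left and right singular vectors of $F$ to agree up to $\mathcal O(t)$, so the $+$-eigenvector $f_+$ of $\bs F$ is almost \emph{symmetric}. Hence $P\vec{\bs w}$, being orthogonal to $g_-$ (so to $f_-$ up to $\mathcal O(t)$) and automatically orthogonal to the near-symmetric $f_+$ up to $\mathcal O(t)$, lies within $\mathcal O(t)\|P\vec{\bs w}\|$ of the gap subspace $\mathcal V^\perp$ of Lemma \ref{lemma:altF}, so $\|\bs F(P\vec{\bs w})\|\le(1-\tfrac\eps2)a$ for $t$ small; combined with $\|\bs F\bs T(I-P)\vec{\bs w}\|\le b<\delta$ and the choice $\delta=\eps/8$ this yields $\|\bs F\bs T\vec{\bs w}\|\le1-\tfrac\eps4$, hence $\|L\vec{\bs w}\|\le1-\tfrac\eps8$. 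The case split is essential: a bare triangle inequality on $\|\bs F(P\vec{\bs w})\|+\|\bs F\bs T(I-P)\vec{\bs w}\|$ loses a factor $\sqrt2$ and is useless when $a,b$ are both moderate, whereas in each case one of the two pieces is genuinely small.

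\medskip

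\textbf{Bootstrap and the main obstacle.} With $\vec{\bs x}_k=L^k\bs W\bs 1$, $a_k=\langle\vec{\bs x}_k,\hat g_-\rangle$, $b_k=\|(I-P_{g_-})\vec{\bs x}_k\|$, I would use $\bs Tg_-=-g_-$, $\bs Fg_-=-(1-c_t)g_-+\mathcal O(t)$ (from $f_-=g_-+\mathcal O(t)$ and \eqref{eq:Feigv}) and the symmetry of $\bs T,\bs F$ to obtain $Lg_-=(1-c_t/2)g_-+\mathcal O(t)$ and $|\langle L\vec{\bs w},\hat g_-\rangle|=\mathcal O(t)\|\vec{\bs w}\|$ for $\vec{\bs w}\perp g_-$; together with the core estimate this gives the coupled recursion $b_{k+1}\le(1-c'\eps)b_k+\mathcal O(t)|a_k|$, $|a_{k+1}|\le(1-c_t/2+\mathcal O(t))|a_k|+\mathcal O(t)b_k$, with $a_0=0$ and $b_0\lesssim\|\bs W\|$. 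Since the $\hat g_-$-component is seeded at $0$ and fed only by the $\mathcal O(t)$ cross term off the rapidly decaying $b_k$, while itself being damped by $L$, a careful induction balancing the two rates (for $t$ small enough, with $c_t$ and the $\mathcal O(t)$ errors comparable) keeps $|a_k|$ negligible and propagates $\|\vec{\bs x}_k\|\le(1-c\eps)^k\|\bs W\|$ for a suitable $c>0$ and all $k$. I expect the main obstacle to be the second case of the core estimate — pinning down that $\mathrm{span}(f_-)$ is the \emph{only} slow direction and extracting the near-symmetry of $f_+$ from \eqref{eq:Feigenvector} — together with the delicate bookkeeping in this last step, whose point is to prevent the $\hat g_-$-component (which decays only at the non-uniform rate $1-c_t/2$) from ever overtaking the uniform bound.
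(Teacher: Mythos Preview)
Your approach is essentially the paper's: induction on $k$, isolation of the slow direction (your $g_-$ versus the paper's $f_-=g_-+\mathcal O(t)$), the same two-case split on the size of the antisymmetric component, and the same mechanism for keeping the slow component small --- your coupled recursion is exactly the content of Lemma~\ref{lem:Fproj} fed back into the induction hypothesis. The one technical deviation is in the nearly-antisymmetric case: you bound the \emph{norm} $\|\bs F(P\vec{\bs w})\|$, which forces you to argue orthogonality to $f_+$ as well (via its near-symmetry) and to invoke the full gap subspace $\mathcal V^\perp$ of Lemma~\ref{lemma:altF}; the paper instead expands $\|L\vec{\bs x}\|^2$ and controls the \emph{inner product} $\langle\bs T\vec{\bs x}',\bs F\vec{\bs x}'\rangle$, which on antisymmetric vectors reduces to $-2\langle x,Fx\rangle$ and therefore only sees the negative part of the spectrum of $\bs F$ --- so orthogonality to $f_-$ alone suffices and no reference to $f_+$ or $\mathcal V$ is needed. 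This makes the paper's Case~I marginally cleaner, but the two arguments are otherwise interchangeable.
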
 
\begin{proof}
We prove this lemma by induction. If $k=0$ the lemma is trivial.
Let $k>0$ and let $ \vec{\bs x}  = \left(\frac{  \bs F \bs T + I}{2} \right)^{k-1} \bs W \bs 1$. By the induction hypothesis we have
\[\left\| \left(\frac{  \bs F \bs T + I}{2} \right)^{j} \bs W \bs 1  \right\| 
 \leq  \left( 1 - c  \epsilon \right)^{j} \|\bs W\|  \]  for all $0 \leq j\leq k-1. $
 
\begin{align} \label{eq:FTI2}
\left\| \left(\frac{  \bs F \bs T + I}{2} \right) \vec{\bs x}   \right\|^2 
& = \frac{1}{4} \left( \| \vec{\bs x} \|^2 + \|   \bs F \bs T  \vec{\bs x}  \|^2 + 2 \langle     \bs F \bs T  \vec{\bs x}  ,  \vec{\bs x}    \rangle    \right) .
\end{align}
We bound the second term by $ \|   \bs F \bs T  \vec{\bs x}  \| \leq  \|   \bs F \| \|\bs T\| \|  \vec{\bs x}  \| \leq \|  \vec{\bs x}  \|$.
Let $\vec{\bs x} = f_{-} \langle f_{-}, \vec{\bs x} \rangle + \vec{\bs x}' $ be the orthogonal decomposition of $\vec{\bs x}$ onto $f_-$ and its orthogonal complement. Then we expand the final term as
\[ \langle \bs F \bs T  \vec{\bs x}  ,  \vec{\bs x}   \rangle =   \langle \bs F \bs T  \vec{\bs x}  ,  \vec{\bs x}'   \rangle  + \langle \bs F \bs T  \vec{\bs x}  ,  f_{-}  \rangle \langle \vec{\bs x}  ,  f_{-}  \rangle =
 \langle \bs F \bs T  \vec{\bs x}'  ,  \vec{\bs x}'   \rangle 
 +  \langle \bs F \bs T   f_{-}   ,  \vec{\bs x}'   \rangle  \langle \vec{\bs x}  ,  f_{-}  \rangle 
 + \langle \bs F \bs T  \vec{\bs x}  ,  f_{-}  \rangle \langle \vec{\bs x}  ,  f_{-}  \rangle 
 . \]
which we bound by 
\begin{equation} \label{eq:lbxub} - \| \vec{\bs x} \|^2  \leq \langle \bs F \bs T  \vec{\bs x}  ,  \vec{\bs x}   \rangle \leq \langle \bs T \vec{\bs x}'   , \bs  F \vec{\bs x}'   \rangle  + 2\|  \vec{\bs x}  \|  \|  f_{-} \|  \langle \vec{\bs x}  ,  f_{-}  \rangle  \,.
 \end{equation}
From the induction hypothesis along with Lemma \ref{lem:Fproj} we have
\begin{equation} \label{eq:fmxip} | \langle f_{-} ,  \vec{\bs x}   \rangle | 
\leq 2 \| \bs\varepsilon(t) \| \sum_{j=0}^{k-2}(1- c \epsilon )^j   \left\|  \bs W \bs 1 \right\|  
\leq   \frac{2}{c \eps} \| \bs\varepsilon(t) \|  \| \bs W \bs 1 \|  \,.  \end{equation}
To bound $\langle \bs T \vec{\bs x}'   , \bs  F \vec{\bs x}'   \rangle $, let $ \vec{\bs x}'  =  \vec{\bs x}_1 +  \vec{\bs x}_2$ where $  \vec{\bs x}_1$ is the projection onto the eigenspace of $\bs T$ corresponding to the eigenvalue $-1$, and  $\vec{\bs x}_2$ is the projection onto the remaining eigenspaces. We now consider two cases based on the size of $\|  \vec{\bs x}_2\| $ compared to $ \| \vec{\bs x}\|$. In what follows $c_1$ will be an appropriately chosen small constant depending only on $\eps$. 
Case I. If $ \|  \vec{\bs x}_2\|  \leq  c_1 \| \vec{\bs x}'\|$ then we begin by expanding:
\begin{equation} \label{eq:x2c1} \langle \bs T \vec{\bs x}' , \bs F  \vec{\bs x}' \rangle =   -\langle \vec{\bs x}_1 , \bs F  \vec{\bs x}_1 \rangle + \langle \bs T \vec{\bs x}_2 , \bs F  \vec{\bs x}_1 \rangle
+  \langle \bs T \vec{\bs x}' , \bs F  \vec{\bs x}_2 \rangle \,. \end{equation}
To bound $-\langle \vec{\bs x}_1 , \bs F  \vec{\bs x}_1 \rangle$ from above we project $ \vec{\bs x}_1$ onto $f_{-}$ and its orthogonal complement. By choice of $c_1$, we will make the projection onto $f_-$ small. We will bound the orthogonal term by using that it is of the form $\begin{pmatrix} \bs x \\  - \bs x \end{pmatrix} +\vec{\bs w}$ and thus not in $V_{-1}$. Indeed, for $c_1$ is chosen sufficiently small (compared to $\eps$)
\[ |\langle  \vec{\bs x}_1, f_{-} \rangle| =  | \langle \vec{\bs x}', f_{-} \rangle -    \langle \vec{\bs x}_2 , f_{-} \rangle | \leq 0+ c_1 \| \vec{\bs x} \| \| f_{-}\| \] 
and then
\[- \langle  \vec{\bs x}_1 ,  \bs F \vec{\bs x}_1 \rangle =  - \langle  \vec{\bs x}_1 ,\bs F  f_{-}  \rangle \langle \vec{\bs x}_1  , f_{-} \rangle -   \langle  \vec{\bs x}_1 ,  \bs F ( \vec{\bs x}_1 - \langle \vec{\bs x}_1  , f_{-} \rangle  f_{-} ) \rangle  \leq c_1   \| \vec{\bs x}' \|^2 \| f_{-}\|^2 + (1-\eps)   \| \vec{\bs x}' \|^2  .  \]
So we have that there exist a constant $c_2$ such that  
\[ - \langle  \vec{\bs x}_1 ,  \bs F \vec{\bs x}_1 \rangle \leq  (1- c_2 \eps) \|  \vec{\bs x}'  \| \]
and if $c_1$ is chosen smaller, then $c_2$ can be chosen closer to 1. Then continuing from \eqref{eq:x2c1} gives:
\[\langle \bs T \vec{\bs x}' , \bs F  \vec{\bs x}' \rangle  \leq   (1- c_2 \eps) \|  \vec{\bs x}'  \|^2    +2 \|  \vec{\bs x}'  \|  \|  \vec{\bs x}_2  \| .\]
Thus, for a sufficiently small choice of $c_1$, there is a $c_3$ such that
\begin{equation} \langle \bs T \vec{\bs x}'  , \bs  F \vec{\bs x}'   \rangle  \leq (1-c_3 \eps) \| \bs x\|^2 .   \end{equation}
Case II:  If $\|  \vec{\bs x}_2\|   > c_1 \| \vec{\bs x}\|$ From the bound $\|\bs T  \vec{\bs x}_2\| \leq (1-\eps) \| \vec{\bs x}_2  \| $, we have that 
\[  \langle  \bs T \vec{\bs x}' , \bs F  \vec{\bs x}' \rangle  \leq \sqrt{ \| \bs T  \vec{\bs x}_1\|^2 + \| \bs T  \vec{\bs x}_2\|^2  } \| \vec{\bs x}' \|    \leq \sqrt{  \|  \vec{\bs x}_1\|^2 + (1- \eps) \|  \vec{\bs x}_2\|^2  } \| \vec{\bs x}' \|          \leq  \sqrt{1 - c_1^2\eps }  \| \vec{\bs x}'\|^2 .  \] 
Choosing $c'$ to be the smaller of the bounds between the two cases, we have for any possible $\vec{\bs x}'$
\begin{equation} \label{eq:TFip}  \langle \bs T \vec{\bs x}'  , \bs  F \vec{\bs x}'   \rangle  \leq (1-c' \eps) \| \bs x\|^2 .   \end{equation}
So for all $t$ sufficiently small, combining \eqref{eq:lbxub}, \eqref{eq:fmxip}, and \eqref{eq:TFip} gives for some constant $c_4$:
\[ - \| \vec{\bs x} \|^2  \leq \langle \bs F \bs T  \vec{\bs x}  ,  \vec{\bs x} \rangle  \leq (1 - c_4 \eps)  \| \vec{\bs x} \|^2 . \]
Substituting these estimates into \eqref{eq:FTI2} gives, that there exist a $c$ such that
\begin{align*}
\left\| \left(\frac{  \bs F \bs T + I}{2} \right) \vec{\bs x}'   \right\| 
& \leq (1- c\, \epsilon )\|  \vec{\bs x}' \| .
\end{align*}
as desired.
\end{proof}
\begin{proof}[Proof of Lemma \ref{correct(I+F)}]
By taking the adjoint and then applying the Cauchy-Schwarz inequality we have
\[ \Bigl| \Bigl\langle \bs\Psi^{1/2} \bs W \bs 1 \, , \, 
   \bs \Psi^{-1/2}  (I-\bs T \bs F)^{-1} \bs \Psi^{1/2}  
  \sqrt{\rvec \vec{\brt}} \Bigr\rangle \Bigr| \leq  \| (I- \bs F \bs T)^{-1}  \bs W \bs 1   \|   \left\|  \bs \Psi^{1/2}  
  \sqrt{\rvec \vec{\brt}}  \right\| \,.\] 
Then applying \eqref{eq:IFTexp} iteratively gives:
\[   ( I -  \bs F \bs T  )^{-1} \bs W \bs 1  = \sum_{k=0}^\infty   \left( \frac{ I +  \bs F \bs T }{2} \right)^{k}  \frac{1}{2} \bs W \bs 1 \,.   \]
Then applying Lemma \ref{lem:boundFTI} we have
\[ \|   ( I -  \bs F \bs T  )^{-1} \bs W \bs 1 \| \leq  \| \bs W \bs 1\|  \sum_{k=0}^{\infty} (1- c \eps)^k \,.  \]
The desired inequality then follows.
\end{proof}
Now, writing 
$\bs E =\begin{pmatrix} I & I \\ I & I \end{pmatrix} \in \R^{2n\times 2n}$, 
we factor the matrix $\bs \Psi^{-1/2}(I-  \bs T \bs F) \bs  \Psi^{1/2}$ 
as in \cite[Equation 4.16]{alt2018local}, namely 
\[ 
\bs \Psi^{-1/2}(I-  \bs T \bs F ) \bs  \Psi^{1/2}  
= (I - s^2 \bs  \Psi^{1/2}  \bs E \bs F ( I + \bs F)^{-1} \bs  \Psi^{1/2} ) 
 ( I + \bs  \Psi^{-1/2} \bs F  \bs  \Psi^{1/2} ) .  
\]
Using Lemma~\ref{correct(I+F)} to add a correction term and then substituting this relationship gives:
\begin{align*}
f_n(z)  &=   \lim_{t \to 0} \frac{1}{\pi n} 
   \left\langle \sqrt{\vec{\bs{ r}} \vec{\brt}} 
     -0.5t \bs\Psi^{1/2}\bs W \bs 1, 
    \bs \Psi^{-1/2}  (I-\bs T \bs F)^{-1} \bs \Psi^{1/2}  
   \sqrt{\vec{\bs{ r}  } \vec{\brt}  } \right\rangle \\
   &=  \lim_{t \to 0} \frac{1}{\pi n} 
   \left\langle 
  ( I + \bs  \Psi^{1/2} \bs F  \bs  \Psi^{-1/2} )^{-1}  
   (\sqrt{\vec{\bs{ r}} \vec{\brt}} 
     -0.5t \bs\Psi^{1/2}\bs W \bs 1), \right.  \\ 
& \ \ \ \ \ \ \ \  \ \ \ \ \ \ \ \  
 \ \ \ \ \ \ \ \  \ \ \ \ \ \ \ \  
 \left.  
(I - s^2 \bs  \Psi^{1/2}  \bs E \bs F ( I + \bs F)^{-1} \bs  \Psi^{1/2} )^{-1} 
   \sqrt{\vec{\bs{ r}  } \vec{\brt}  } \right\rangle \\
   &=  \lim_{t \to 0} \frac{1}{2 \pi n} 
   \left\langle \sqrt{\vec{\bs{ r}  } \vec{\brt} },  
(I - s^2 \bs  \Psi^{1/2}  \bs E \bs F ( I + \bs F)^{-1} \bs  \Psi^{1/2} )^{-1} 
   \sqrt{\vec{\bs{ r}  } \vec{\brt}  } \right\rangle ,
\end{align*} 
where the final equality uses \eqref{I+F}. After some algebraic manipulations, it is shown 
in~\cite{alt2018local} that 
\[
(I - s^2 \bs  \Psi^{1/2}  \bs E \bs F ( I + \bs F)^{-1} \bs  \Psi^{1/2} )^{-1} 
\begin{pmatrix} x \\ x \end{pmatrix} = 
\begin{pmatrix} 
 ( I - s^2 \Psi^{1/2} B \Psi^{1/2} )^{-1}  x \\ 
 ( I - s^2 \Psi^{1/2} B \Psi^{1/2} )^{-1}  x  
\end{pmatrix}, 
\]
where 
$$
B x =  \begin{pmatrix} I & I \end{pmatrix} \left( \begin{pmatrix} I & 0 \\ 0 & I \end{pmatrix} - \begin{pmatrix} I & F \\ F^{\tran} & I \end{pmatrix}^{-1}  \right)  \begin{pmatrix} x \\ x \end{pmatrix} .
$$ 
We thus obtain that 
\begin{equation} 
\label{eq:dens2}
f_n(z) \quad =\quad\lim_ {t\to 0} \frac{1}{\pi n} 
\left\langle \sqrt{\bs{ r}  \brt } , ( I - s^2 \Psi^{1/2} B \Psi^{1/2} )^{-1} 
   \sqrt{\bs{ r}  \brt } \right\rangle .  
\end{equation}
The matrix $B$ is symmetric. Furthermore, because the spectrum of $F$ is contained in $[-1,1]$ and the vector $s^2 \Psi$ has entries strictly less than $1$ we have the eigenvalues of $s^2 \Psi^{1/2} B \Psi^{1/2} $ are bounded away from 1, uniformly in $t$; see \cite[Eq. (4.20) - (4.22)]{alt2018local} for details (note the matrix $B$ is labeled $A$ there).
To lower bound this expression we begin by noting that if $ \begin{pmatrix}   x \\  x \end{pmatrix} $ is an eigenvector of $\bs F$, with eigenvalue $\lambda$, then 
\begin{equation} \label{eq:Beigv}
B x = \frac{2  \lambda}{1+\lambda}   x.
 \end{equation} 
From Lemma \ref{lemma:altF} we have that $ \begin{pmatrix}  \Psi^{-1/2} \sqrt{\bs{ r}  \brt }   \\ \Psi^{-1/2} \sqrt{\bs{ r}  \brt }  \end{pmatrix}$ is $O(t)$ from an eigenvector of $\bs F$ with eigenvalue $1$. Let $f_+$ be this eigenvector. Since the operator $ ( I - s^2 \Psi^{1/2} B \Psi^{1/2} )^{-1} $ has uniformly bounded norm, we can replace $  \sqrt{\bs{ r}  \brt }$ with $ \Psi^{1/2}  f_+  $, at the cost of an error that goes to zero as $t\to 0$.
 We now have all the elements to provide a lower bound on the density. 
Using the Cauchy-Schwarz inequality 
(with respect to the inner product $\langle \cdot, (s^{-2}\Psi^{-1}- B )^{-1} \cdot\rangle$)
along with \eqref{eq:Beigv}, we have  
\begin{align*} 
\lim_{t\to 0}\langle \sqrt{\bs{ r}  \brt } , ( I -  s^2 \Psi^{1/2} B \Psi^{1/2})^{-1}
 \sqrt{\bs{r}  \brt } \rangle &= 
 \lim_{t\to 0}\langle  \Psi^{-1/2}  f_+ , ( I -  s^2 \Psi^{1/2} B \Psi^{1/2})^{-1}
 \Psi^{-1/2}  f_+\rangle \\
 &= 
\lim_{t\to 0}s^{-2} \langle f_+ , (s^{-2} \Psi^{-1} -  B)^{-1}
f_+ \rangle \\ 
&\geq  \lim_{t\to 0} \frac{\| f_+ \|^2} 
 {s^2 \langle f_+, (s^{-2} \Psi^{-1} -  B)
 f_+ \rangle} \\
  &=  \lim_{t\to 0} \frac{\| f_+ \|^2} 
 {s^2 \langle  f_+ , (s^{-2} \Psi^{-1}  - I )
  f_+  \rangle} \,.
\end{align*}
Taking the limit $t \to 0$ and using that  
$f_+ \to \Psi^{-1/2} \sqrt{\bs{ q}  \bqt } $ as $t \to 0$ gives 
\begin{align*}
\lim_{t\to 0} \frac{\| f_+ \|^2} 
 {s^2 \langle  f_+ , (s^{-2} \Psi^{-1}  - I)
  f_+  \rangle} =
\frac{\| \Psi^{-1/2} \sqrt{\bs{ q} \bqt } \|^2} 
{ s^2 \langle \Psi^{-1} \bs{ q} \bqt  , (s^{-2} \Psi^{-1} \bs 1 - \bs 1) \rangle } .
\end{align*} 
Then using the equalities 
\[   \Psi^{-1} (s^{-2} \Psi^{-1} \bs 1 - \bs 1) = \Psi^{-1} \frac{\bphi \bphit}{s^2} = \frac{ \Psi \bs{ q} \bqt  }{s^2}\  \]
gives
\begin{equation} \label{eq:LBqq}
f_n(z) \quad  \geq  \quad \frac{ \sum_{i=1}^n \Psi_i^{-1} q_i \qt_i  }{  \sum_{i=1}^n \Psi_i q_i^2 \qt_i^2 }.
\end{equation}

From the uniformity in $t$ in Lemma~\ref{lemma:boundsonq}, $q_i, \qt_i$ are upper and lower-bounded and hence Theorem~\ref{th:positive}--\eqref{th:positive-i1} is proven. 
\end{proof}

\subsection{Proof of Corollary \ref{prop:circlaw}}\label{proof:circlaw}
The proof relies on the following theorem by Friedland and Karlin:
\begin{theo}[Theorem 3.1, Equation (1.9) in \cite{FK:spectral-rad}] \label{thm:FKspec}
Let $M$ be an irreducible non-negative matrix with Perron-Frobenius left and right eigenvectors $\bs{u},\bs{v}$ normalized so that $\sum_{i\in [n]} u_i v_i =1$ and $\rho(M) =1$. Let $D$ be a diagonal matrix with positive entries. Then
\begin{equation} \label{eq:specradbound} \rho(M D) \geq \prod_{i}^n d_i^{u_i v_i} \end{equation}
\end{theo}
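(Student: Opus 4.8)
The plan is to deduce \eqref{eq:specradbound} from the convexity of
\[
g(\bs t)\ =\ \log\rho\!\left(M\,\diag(e^{t_1},\dots,e^{t_n})\right),\qquad \bs t=(t_1,\dots,t_n)\in\R^n,
\]
together with the identification of its gradient at the origin. Since $M\,\diag(e^{t_1},\dots,e^{t_n})$ has the same sign pattern as $M$, it is again irreducible and nonnegative, so $g$ is well defined and, as $\rho(M)=1$, $g(\bs 0)=0$; moreover, taking $t_i=\log d_i$ (legitimate because $d_i>0$) we get $g(\log d_1,\dots,\log d_n)=\log\rho(MD)$. Hence, once $g$ is known to be convex with $\nabla g(\bs 0)=(u_iv_i)_{i\in[n]}$, the tangent-line inequality gives $g(\bs t)\ge g(\bs 0)+\sum_{i\in[n]} u_iv_i t_i=\sum_{i\in[n]} u_iv_i t_i$; specializing $t_i=\log d_i$ and exponentiating yields $\rho(MD)\ge\exp\bigl(\sum_{i\in[n]} u_iv_i\log d_i\bigr)=\prod_{i\in[n]} d_i^{u_iv_i}$, which is exactly \eqref{eq:specradbound}.

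For the convexity of $g$ (this is also an instance of Kingman's theorem on log-convexity of the Perron--Frobenius eigenvalue), I would argue directly. Fix $\bs t,\bs s\in\R^n$ and $\lambda\in(0,1)$, set $\bs r=\lambda\bs t+(1-\lambda)\bs s$, and let $\bs y\succ 0$ and $\bs z\succ 0$ be the Perron--Frobenius eigenvectors of $M\,\diag(e^{t_j})$ and $M\,\diag(e^{s_j})$, with eigenvalues $\rho_t$ and $\rho_s$. Testing against the positive vector $\bs x=(y_i^{\lambda}z_i^{1-\lambda})_{i\in[n]}$ and applying H\"older's inequality with exponents $1/\lambda$ and $1/(1-\lambda)$ to
\[
\bigl(M\,\diag(e^{r_j})\,\bs x\bigr)_i\ =\ \sum_j \bigl(M_{ij}e^{t_j}y_j\bigr)^{\lambda}\bigl(M_{ij}e^{s_j}z_j\bigr)^{1-\lambda}
\]
gives $\bigl(M\,\diag(e^{r_j})\,\bs x\bigr)_i\le \rho_t^{\lambda}\rho_s^{1-\lambda}\,x_i$ for every $i$. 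Using the Collatz--Wielandt characterization $\rho(B)=\min_{\bs x\succ 0}\max_i (B\bs x)_i/x_i$, valid for irreducible nonnegative $B$, we conclude $g(\bs r)\le\lambda g(\bs t)+(1-\lambda)g(\bs s)$.

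For the gradient, recall that the Perron--Frobenius eigenvalue of an irreducible nonnegative matrix is a simple eigenvalue, so $\bs t\mapsto\rho(M\,\diag(e^{t_j}))$ is real-analytic near $\bs 0$ and first-order perturbation theory for a simple eigenvalue applies. With $\partial_{t_k}\bigl(M\,\diag(e^{t_j})\bigr)\big|_{\bs 0}=ME_{kk}$, where $E_{kk}$ has a $1$ in position $(k,k)$ and zeros elsewhere, the standard formula gives
\[
\frac{\partial}{\partial t_k}\,\rho\!\left(M\,\diag(e^{t_j})\right)\Big|_{\bs t=\bs 0}\ =\ \frac{\bs u^{\tran}(ME_{kk})\bs v}{\bs u^{\tran}\bs v}\ =\ \frac{(\bs u^{\tran}M)_k\,v_k}{\bs u^{\tran}\bs v}\ =\ u_kv_k,
\]
using $\bs u^{\tran}M=\bs u^{\tran}$ (since $\rho(M)=1$) and the normalization $\bs u^{\tran}\bs v=\sum_{i\in[n]} u_iv_i=1$; dividing by $\rho(M)=1$ gives $\partial_{t_k}g(\bs 0)=u_kv_k$, which is the missing ingredient.

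The two substantive inputs are the log-convexity of the Perron--Frobenius eigenvalue — for which the H\"older/Collatz--Wielandt argument above is self-contained, the only point of care being the strict positivity of the eigenvectors $\bs y,\bs z$ used to form $\bs x$ — and the differentiability of the simple Perron--Frobenius eigenvalue together with its first-order perturbation formula; both are classical. I do not expect a genuine obstacle beyond invoking these facts precisely and checking the H\"older bookkeeping; the passage from them to \eqref{eq:specradbound} is the one-line convexity estimate of the first paragraph.
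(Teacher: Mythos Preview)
Your proof is correct. The convexity argument via H\"older and Collatz--Wielandt is the standard proof of Kingman's log-convexity theorem, and the gradient computation at the origin via first-order perturbation of a simple eigenvalue is clean and accurate; the tangent-line inequality then gives \eqref{eq:specradbound} immediately.

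There is nothing to compare against in the paper itself: the statement is quoted from Friedland and Karlin \cite{FK:spectral-rad} without proof and used as a black box in the proof of Corollary~\ref{prop:circlaw}. Your argument is thus a self-contained substitute for the citation. For what it is worth, the original Friedland--Karlin proof proceeds somewhat differently, via a variational characterization of $\log\rho$ (in the spirit of the Donsker--Varadhan formula) from which the weighted geometric-mean lower bound drops out; your route through Kingman-type log-convexity plus the eigenvalue perturbation formula is an equally standard and arguably more transparent way to reach the same inequality.
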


\begin{proof}[Proof of Corollary \ref{prop:circlaw}]
Without loss of generality we consider $V$ such that $\rho(V)=1$. Proposition \ref{prop:LB}, $\mu_n$ gives the formula for the density at $0$.
By \eqref{eq:sink}, matrix $S: = \diag(\bq) V \diag(\bqt)$ is  doubly stochastic hence with spectral radius 1 and any left or right Perron-Frobenius eigenvector $\bs{u}$ or $\bs{v}$ is proportional to $\bs{1}_n$. In particular, the normalization $\sum_{i\in [n]} u_i v_i=1$ implies $u_iv_i= n^{-1}$.  We now apply Theorem \ref{thm:FKspec} with $M = S$ and $D = \left( \diag(\bqt) \diag(\bq) \right)^{-1}$ to get 
$$
\rho(S \left( \diag(\bqt) \diag(\bq) \right)^{-1}) \quad \ge \quad \prod_{i\in [n]} \left(\frac 1{q_i(0)\widetilde q_i(0)}\right)^{\frac 1n} .
$$
Since $\rho(S D) = \rho(\left(  \diag(\bq) \right)^{-1}  S \left( \diag(\bqt) \right)^{-1}) = \rho( V ) = 1$,
we arrive at
$$ 
1 \quad \leq\quad  \prod_{i\in [n]} \left[ q_i(0)  \widetilde q_i(0) \right]^{\frac 1n} \quad \leq\quad  \frac{1}{n} \sum_{i\in [n]}  q_i(0)  \widetilde q_i(0)\,   ,
$$
where the second inequality is the the AM-GM inequality. We note that equality in the final inequality only occurs if $  q_i(0) \widetilde q_i(0)  =1 $ for all $i\in [n]$. This condition can be rewritten as $\diag(\bq)^{-1}=\diag(\widetilde \bq)$, which, by Remark \ref{rem:necessary-circular}, implies the desired form $V = \diag(\bq)^{-1}\, S\, \diag(\bq)\, $.
\end{proof}

\bibliographystyle{abbrv}
\bibliography{math}

\def\cprime{$'$} \def\cdprime{$''$} \def\cprime{$'$} \def\cprime{$'$}
  \def\cprime{$'$} \def\cprime{$'$}
\begin{thebibliography}{10}

\bibitem{AH:DT}
L.~Aagaard and U.~Haagerup.
\newblock Moment formulas for the quasi-nilpotent {DT}-operator.
\newblock {\em Internat. J. Math.}, 15(6):581--628, 2004.

\bibitem{Ahmadian:2015xw}
Y.~Ahmadian, F.~Fumarola, and K.~D. Miller.
\newblock Properties of networks with partially structured and partially random
  connectivity.
\newblock {\em Physical Review E}, 91(1):012820, 2015.

\bibitem{AEKquadequations}
O.~{Ajanki}, L.~{Erdos}, and T.~{Kr{\"u}ger}.
\newblock {Quadratic vector equations on complex upper half-plane}.
\newblock {\em Mem. Amer. Math. Soc.}, 261(1261):v+133, 2019.

\bibitem{Allesina:2015ux}
S.~Allesina and S.~Tang.
\newblock The stability--complexity relationship at age 40: a random matrix
  perspective.
\newblock {\em Population Ecology}, 57(1):63--75, 2015.

\bibitem{AEKgram}
J.~Alt, L.~Erd{\H{o}}s, and T.~Kr{\"u}ger.
\newblock Local law for random gram matrices.
\newblock {\em Electronic Journal of Probability}, 22, 2017.

\bibitem{alt2018local}
J.~Alt, L.~Erd{\H{o}}s, T.~Kr{\"u}ger, et~al.
\newblock Local inhomogeneous circular law.
\newblock {\em The Annals of Applied Probability}, 28(1):148--203, 2018.

\bibitem{alt2019location}
J.~Alt, L.~Erd{\H{o}}s, T.~Kr{\"u}ger, Y.~Nemish, et~al.
\newblock Location of the spectrum of kronecker random matrices.
\newblock In {\em Annales de l'Institut Henri Poincar{\'e}, Probabilit{\'e}s et
  Statistiques}, volume~55, pages 661--696. Institut Henri Poincar{\'e}, 2019.

\bibitem{ans-pal-68}
P.~M. Anselone and T.~W. Palmer.
\newblock Spectral analysis of collectively compact, strongly convergent
  operator sequences.
\newblock {\em Pacific J. Math.}, 25:423--431, 1968.

\bibitem{bap-rag-(book)97}
R.~B. Bapat and T.~E.~S. Raghavan.
\newblock {\em Nonnegative matrices and applications}, volume~64 of {\em
  Encyclopedia of Mathematics and its Applications}.
\newblock Cambridge University Press, Cambridge, 1997.

\bibitem{2012-bordenave-chafai-circular}
C.~Bordenave and D.~Chafa{\"{\i}}.
\newblock Around the circular law.
\newblock {\em Probab. Surv.}, 9:1--89, 2012.

\bibitem{cook2018lower}
N.~A. Cook.
\newblock Lower bounds for the smallest singular value of structured random
  matrices.
\newblock {\em Ann. Probab.}, 46(6):3442--3500, 2018.

\bibitem{cook2018non}
N.~A. Cook, W.~Hachem, J.~Najim, and D.~Renfrew.
\newblock Non-{H}ermitian random matrices with a variance profile ({I}):
  deterministic equivalents and limiting {ESD}s.
\newblock {\em Electron. J. Probab.}, 23:Paper No. 110, 61, 2018.

\bibitem{deim-livre85}
K.~Deimling.
\newblock {\em Nonlinear functional analysis}.
\newblock Springer-Verlag, Berlin, 1985.

\bibitem{FK:spectral-rad}
S.~Friedland and S.~Karlin.
\newblock Some inequalities for the spectral radius of non-negative matrices
  and applications.
\newblock {\em Duke Math. J.}, 42(3):459--490, 1975.

\bibitem{girko-canonical-equations-I}
V.~L. Girko.
\newblock {\em Theory of stochastic canonical equations. {V}ol. {I}}, volume
  535 of {\em Mathematics and its Applications}.
\newblock Kluwer Academic Publishers, Dordrecht, 2001.

\bibitem{speicher-et-al-2007}
J.~W. Helton, R.~Rashidi~Far, and R.~Speicher.
\newblock Operator-valued semicircular elements: solving a quadratic matrix
  equation with positivity constraints.
\newblock {\em Int. Math. Res. Not. IMRN}, (22):Art. ID rnm086, 15, 2007.

\bibitem{ree-sim-livre-1}
M.~Reed and B.~Simon.
\newblock {\em Methods of modern mathematical physics. {I}}.
\newblock Academic Press Inc., New York, second edition, 1980.
\newblock Functional analysis.

\bibitem{Sinkhorn2}
R.~Sinkhorn and P.~Knopp.
\newblock Concerning nonnegative matrices and doubly stochastic matrices.
\newblock {\em Pacific J. Math.}, 21:343--348, 1967.

\end{thebibliography}

\noindent {\sc Nicholas Cook}\\
Department of Mathematics\\
Duke University\\
Durham, NC 27708\\
e-mail: {\tt nickcook@math.duke.edu}\\

\noindent {\sc Walid Hachem, Jamal Najim},\\
CNRS / Laboratoire d'Informatique Gaspard Monge,\\
Universit\'e Gustave Eiffel, ESIEE\\ 
5, Boulevard Descartes,\\
Champs sur Marne,
F-77454 Marne-la-Vall\'ee, France\\
e-mail: {\tt \{walid.hachem,jamal.najim\}@univ-eiffel.fr}\\

\noindent {\sc David Renfrew}\\
Department of Mathematical Sciences\\
Binghamton University (SUNY)\\
Binghamton, NY 3902-6000\\
e-mail: {\tt renfrew@math.binghamton.edu}\\

\end{document}